\documentclass{amsart}

\usepackage{amsfonts,amssymb,amsmath,amsthm}

\usepackage[all]{xy}
\usepackage{tikz}
\usepackage{tikz-cd}
\usepackage{caption}
\usepackage{subcaption}
\usepackage{graphicx}
\usepackage{hyperref}

\graphicspath{{Graphics/}}

\newtheorem{thm}{Theorem}[section]
\newtheorem{cor}[thm]{Corollary}
\newtheorem{lem}[thm]{Lemma}
\newtheorem{prop}[thm]{Proposition}
\newtheorem{conj}[thm]{Conjecture}

\theoremstyle{definition}
\newtheorem{defn}[thm]{Definition}
\newtheorem{exm}[thm]{Example}

\theoremstyle{remark}
\newtheorem{remark}[thm]{Remark}

\newcommand{\R}{\mathbb{R}}
\newcommand{\QC}{\mathcal{Q}}
\newcommand{\PC}{\mathcal{P}}
\newcommand{\OC}{\mathcal{O}}

\newcommand{\A}{\mathcal{A}}
\newcommand{\M}{\mathcal{M}}
\newcommand{\N}{\mathcal{N}}

\newcommand{\wPC}{\widehat{\mathcal{P}} }
\newcommand{\wDelta}{\widehat{\Delta}}
\newcommand{\wsigma}{\widehat{\sigma}}
\newcommand{\walpha}{\widehat{\alpha}}
\newcommand{\wgamma}{\widehat{\gamma}}
\newcommand{\reg}{\mbox{\emph{reg}}}
\newcommand{\syz}{\mbox{\emph{syz}}}
\newcommand{\row}{\mathfrak{r}}

\begin{document}

\title{Regularity of Mixed Spline Spaces}

\author{Michael DiPasquale}
\thanks{\noindent 2010 \textit{Mathematics Subject Classification.} Primary 13P25, Secondary 13P20, 13D02.\\
\textit{Key words and phrases:} polyhedral spline, polytopal complex, Castelnuovo-Mumford regularity, homological algebra\\
Author supported by National Science Foundation grant DMS 0838434 ``EMSW21MCTP: Research Experience for Graduate Students.''}
\address{Department of Mathematics, University of Illinois, Urbana, Illinois, 61801}
\email{dipasqu1@illinois.edu}

\begin{abstract}
We derive bounds on the regularity of the algebra $C^\alpha(\PC)$ of mixed splines over a central polytopal complex $\PC\subset\R^3$.  As a consequence we bound the largest integer $d$ (the postulation number) for which the Hilbert polynomial $HP(C^\alpha(\PC),d)$ disagrees with the Hilbert function $HF(C^\alpha(\PC),d)=\dim C^\alpha(\PC)_d$.  The polynomial $HP(C^\alpha(\PC),d)$ has been computed in ~\cite{Assprimes}, building on~\cite{FatPoints,TSchenck08}.  Hence the regularity bounds obtained indicate when a known polynomial gives the correct dimension of the spline space $C^\alpha(\PC)_d$.  In the simplicial case with all smoothness parameters equal, we recover a bound originally due to Hong~\cite{HongDong} and Ibrahim and Schumaker~\cite{SuperSpline}.
\end{abstract}

\maketitle

\section{Introduction}
Let $\PC$ be a subdivision of a region in $\R^n$ by convex polytopes.  $C^r(\PC)$ denotes the set of piecewise polynomial functions (splines) on $\PC$ that are continuously differentiable of order $r$.  Splines are a fundamental tool in approximation theory and numerical analysis~\cite{Boor}; more recently they have also appeared in a geometric context, describing the equivariant cohomology ring of toric varieties \cite{Payne}.  Practical applications include surface modelling, computer-aided design, and computer graphics~\cite{Boor}.

One of the fundamental questions in spline theory is to determine the dimension of the space $C^r_d(\PC)$ of splines of degree at most $d$.  In the bivariate, simplicial case, these questions are studied by Alfeld-Schumaker in \cite{AS4r} and \cite{AS3r} using Bernstein-Bezier methods. A signature result in \cite{AS3r} is a formula for $\dim C^r_d(\Delta)$ when $d \ge 3r+1$ and $\Delta\subset\R^2$ is a generic simplicial complex.  For $\Delta\subset\R^2$ simplicial and nongeneric, Hong~\cite{HongDong} and Ibrahim-Schumaker~\cite{SuperSpline} derive a formula for $\dim C^r_d(\Delta)$ when $d\ge 3r+2$ as a byproduct of constructing local bases for these spaces.

An algebraic approach to the dimension question was pioneered by Billera in \cite{Homology} using homological and commutative algebra.  In~\cite{DimSeries}, Billera-Rose show that $C^r_d(\PC)\cong C^r(\wPC)_d$, the $d$th graded piece of the algebra $C^r(\wPC)$ of splines on the cone $\wPC$ over $\PC$.  The function $HF(C^r(\wPC),d)=\dim_\R C^r(\wPC)_d$ is known as the \textit{Hilbert function} of $C^r(\wPC)$ in commutative algebra, and a standard result is that the values of the Hilbert function eventually agree with the \textit{Hilbert polynomial} $HP(C^r(\wPC),d)$ of $C^r(\wPC)$.  An important invariant of $C^r(\wPC)$ is the \textit{postulation number} $\wp(C^r(\wPC))$, which is the largest integer $d$ so that $HP(C^r(\wPC),d)\neq HF(C^r(\wPC),d)$.  In this terminology the Alfeld-Schumaker result above could be viewed as a computation of $HP(C^r(\wDelta),d)$ plus the bound $\wp(C^r(\wDelta))\le 3r$.

The goal of this paper is to provide upper bounds on the postulation number $\wp(C^\alpha(\PC))$ for \textit{central} polytopal complexes $\PC\subset\R^{n+1}$, where $C^\alpha(\PC)$ is the algebra of \textit{mixed splines} over $\PC$.  A \textit{central} polytopal complex is one in which the intersection of all interior faces is nonempty; if $\PC$ is central then splines on $\PC$ are a graded algebra.  \textit{Mixed splines} are splines in which different smoothness conditions are imposed across codimension one faces.

The main reason for bounding $\wp(C^\alpha(\PC))$ is that the Hilbert polynomial of $C^\alpha(\PC)$ has been computed in situations where there are no known bounds on $\wp(C^\alpha(\PC))$, rendering these formulas impractical.  Currently, bounds which do not make heavy restrictions on the complex $\PC$ are known only in the simplicial case.  These bounds are recorded in Table~\ref{tbl:PostBounds}.  For particular types of complexes $\PC$, better and sometimes exact bounds are known for $\wp(C^r(\PC))$.
\begin{table}[tp]
\begin{tabular}{|lll|}
\multicolumn{3}{c}{Analytic Methods}\\
\hline
Bound & Context & Computed by\\
$\wp(C^r(\wDelta))\le 3r$ & \textit{generic} simplicial $\Delta\subset\R^2$& Alfeld-Schumaker ~\cite{AS3r} \\
$\wp(C^r(\wDelta))\le 3r+1$ & \textit{all} simplicial $\Delta\subset\R^2$ & Hong~\cite{HongDong}\\ & & Ibrahim-Schumaker~\cite{SuperSpline}\\
$\wp(C^1(\wDelta))\le 3$ & \textit{all} simplicial $\Delta\subset\R^2$ & Alfeld-Piper-Schumaker~\cite{APSr14}\\
$\wp(C^1(\wDelta))\le 7$ & \textit{generic} simplicial $\Delta\subset\R^3$ & Alfeld-Schumaker-Whiteley ~\cite{ASWTet}\\
\hline
\end{tabular}

\begin{tabular}{|lll|}
\multicolumn{3}{c}{Homological Methods}\\
\hline
Bound & Context & Computed by\\
$\wp(C^r(\wDelta))\le 4r$ & \textit{all} simplicial $\Delta\subset\R^2$ & Mourrain-Villamizar~\cite{D2}\\
$\wp(C^1(\wDelta))\le 1$ & \textit{generic} simplicial $\Delta\subset\R^2$ & Billera~\cite{Homology}\\
\hline
\end{tabular}
\caption{Bounds on $\wp(C^r(\wDelta))$}\label{tbl:PostBounds}
\end{table}
In contrast, the Hilbert polynomial $HP(C^\alpha(\PC),d)$ has been computed for \textit{all} central polytopal complexes $\PC\subset\R^3$.  This is done in the simplicial case with mixed smoothness by Schenck-Geramita~\cite{FatPoints}, in the polytopal case with uniform smoothness by Schenck-McDonald~\cite{TSchenck08}, and in the polytopal case with mixed smoothness and boundary conditions, by the author~\cite{Assprimes}.  In this paper we provide the first bound on $\wp(C^\alpha(\PC))$ for all central polytopal complexes $\PC\subset\R^3$.  Specifically, given \textit{smoothness parameters} $\alpha(\tau)$ associated to each codimension one face $\tau\in\PC$, our first result is the following.

\medskip

\noindent\textbf{Theorem~\ref{thm:main2}}
Let $\PC\subset\R^3$ be a central, pure, hereditary three-dimensional polytopal complex.  Set
\[
e(\PC)=\max\limits_{\tau\in\PC^0_2}\{\sum\limits_{\gamma\in (\mbox{st}(\tau))_2} (\alpha(\gamma)+1)\},
\]
where $\mbox{st}(\tau)$ denotes the star of $\tau$ and $(\mbox{st}(\tau))_2$ denotes the $2$-faces of $\mbox{st}(\tau)$.  Then
\[
\wp(C^\alpha(\PC))\le e(\PC)-3.
\]
In particular, $HP(C^\alpha(\PC),d)=\dim_\R C^\alpha(\PC)_d$ for $d\ge e(\PC)-2$.

\medskip

From an algebraic perspective, another reason for bounding $\wp(C^\alpha(\PC))$ is that almost all existing bounds, including most in Table~\ref{tbl:PostBounds}, have been computed using analytic techniques.  There are a few instances where algebraic techniques are applied to bound $\wp(C^\alpha(\PC))$.  In~\cite{Homology}, Billera proves $\wp(C^1(\wDelta))\le 1$ for \textit{generic} simplicial complexes (this result relies on a computation of Whiteley~\cite{WhiteleyM}).  The most general bound produced by homological techniques to date is by Mourrain-Villamizar~\cite{D2}; building on work of Schenck-Stillman~\cite{LCoho} they prove that $\wp(C^r(\wDelta))\le 4r$ for $\Delta$ a planar simplicial complex, recovering an earlier result of Alfeld-Schumaker~\cite{AS4r}.  Our second result is the following.

\medskip

\noindent\textbf{Theorem~\ref{thm:main3}}
Let $\Delta\subset\R^3$ be a central, pure, hereditary three-dimensional simplicial complex.  For a $2$-face $\tau\in\Delta^0_2$, set
\[
M(\tau)=(\alpha(\tau)+1)+\max\{(\alpha(\gamma_1)+1)+(\alpha(\gamma_2)+1)|\gamma_1\neq\gamma_2 \in(\mbox{st}(\tau))_2 \}.
\]
Then
\[
\wp(C^\alpha(\Delta))\le \max\limits_{\tau\in\Delta^0_2}\{M(\tau)\}-2.
\]
In particular, $HP(C^\alpha(\Delta),d)=\dim_\R C^r(\Delta)_d$ for $d\ge \max\limits_{\tau\in\Delta^0_2}\{M(\tau)\}-1$.

\medskip

Setting $\alpha(\tau)=r$ for all $\tau\in\Delta^0_2$, we recover that $HP(C^r(\wDelta),d)=\dim C^r(\wDelta)_d$ for $d\ge 3r+2$.  This was originally proved via constructing local bases by Hong~\cite{HongDong} and Ibrahim-Schumaker~\cite{SuperSpline}, and is the best bound valid for all planar simplicial complexes recorded in Table~\ref{tbl:PostBounds}.

%In \cite{Homology} Billera uses homological methods to analyze a chain complex $\cR/\cI$, proving a conjecture of Strang.  Schenck and Stillman present a modified complex $cR/\cJ$ in \cite{LCoho} whose lower homologies are better behaved.  In~\cite{MinReg}, they use this chain complex to compute the Hilbert polynomial $HP(C^r(\wDelta),d)$ for $\Delta\subset\R^2$ a planar simplicial complex, hence the dimension of $C^r_d(\Delta)$ for $d>\wp(C^r(\wPC))$.  Their computation recovers the dimension formula of Alfeld and Schumaker using homological methods, but does not recover the bound $\wp(C^r(\wPC))\le 3r$.  Recently Mourrain and Villamizar~\cite{TReg} use homological methods introduced by Schenck and Stillman in \cite{LCoho} to give a simple argument that $\wp(C^r(\wPC))\le 4r$.  In~\cite{FatPoints}, Geramita and Schenck compute the Hilbert polynomial $HP(C^\alpha(\wDelta),d)$ of the algebra $C^\alpha(\wDelta)$ of mixed splines (different orders of smoothness imposed across codimension one faces) on the cone over a planar simplicial complex.  McDonald and Schenck~\cite{TSchenck08} compute the Hilbert polynomial $HP(C^r(\wPC),d)$ for $\PC\subset\R^2$ a planar polytopal complex, giving a polytopal version of Alfeld-Schumaker formula.  A generalization of these results appears in~\cite{Assprimes}, where the Hilbert polynomial $HP(C^\alpha(\PC),d)$ is computed for $\PC\subset\R^3$ a central polytopal complex, and vanishing may be imposed on any codimension one boundary faces containing the central vertex.

A key tool we use to prove these results is the \textit{Castelnuovo-Mumford regularity} of $C^\alpha(\PC)$, denoted $\reg(C^\alpha(\PC))$.  The relationship between $\reg(C^\alpha(\PC))$ and $\wp(C^\alpha(\PC))$ is discussed in detail in \S~\ref{sec:regularity}.  This invariant is also used by Schenck-Stiller in~\cite{CohVan}.  Our particular way of using regularity is inspired by an observation used in the Gruson-Lazarsfeld-Peskine theorem bounding the regularity of curves in projective space.  In the context of splines this observation is roughly that, if we are lucky, we can bound $\reg(C^\alpha(\PC))$ by the regularity of a `bad' approximation.  This statement is made precise in Proposition~\ref{prop:RegDepth} and Theorem~\ref{thm:main0}.  We take as our approximation certain locally-supported subalgebras of splines introduced in~\cite{LS}.  This could be viewed as an algebraic analogue of locally-supported bases used in~\cite{HongDong,SuperSpline}.

The paper is organized as follows.  In \S~\ref{sec:prelim} we give some background on the spline algebra $C^\alpha(\PC)$, in particular the algebraic approach pioneered by Billera~\cite{Homology} and Billera and Rose~\cite{DimSeries}.  We recall the construction of lattice-supported splines $LS^{\alpha,k}(\PC)$ introduced in~\cite{LS}.  These will provide approximations to $C^\alpha(\PC)$.  In \S~\ref{sec:Int} and \S~\ref{sec:splineint} we fit lattice-supported splines into a Cech-like complex.  In \S~\ref{sec:regularity} we recall the definition of the regularity of a graded module and prove Proposition~\ref{prop:RegDepth}, which is our main tool for bounding regularity.  In \S~\ref{sec:LowPD} we prove our main results for bounding regularity of spline modules of low projective dimension and prove Theorem~\ref{thm:main2} bounding the regularity of $C^\alpha(\PC)$ where $\PC\subset\R^3$ is a central polytopal complex.  In \S~\ref{sec:SStar} we build on work of Tohaneanu-Minac ~\cite{Stefan} and prove the more precise regularity estimate for central simplicial complexes $\Delta\subset\R^3$ in Theorem~\ref{thm:main3}.  We close in \S~\ref{sec:conjectures} with conjectured regularity bounds generalizing those derived in this paper.  The two following examples illustrate our results.

\subsection{Examples}
Let $\PC\subset\R^2$ be a subdivision of a topological $2$-disk by convex polytopes.  $C^r(\PC)$ is the algebra of $r$-splines on $\PC$, where $\alpha(\tau)=r$ for every interior edge and $\alpha(\tau)=-1$ for every boundary edge.
By Corollary~3.14 of \cite{TSchenck08}, the Hilbert polynomial of $C^r(\wPC)$ is
\begin{eqnarray}\label{eqn:HPolytopal}
HP(C^r(\wPC),d)= & \frac{f_2}{2}d^2 +\frac{3f_2-2(r+1)f^0_1}{2}d+f_2+\left(\binom{r}{2}-1\right)f^0_1+\sum_j c_j,
\end{eqnarray}
where $f_i,f^0_i$ are the number of $i$-faces and interior $i$-faces of $\PC$, $r$ is the smoothness parameter, and the constants $c_j$ record the dimension of certain vector spaces coming from ideals of powers of linear forms.

\begin{exm}\label{ex:HPolytopal}
The complex $\QC$ in Figure~\ref{fig:HPolytopal} has $f_2=4,f^0_1=6,f^0_3=3$.  It is shown in \S~4 of \cite{TSchenck08} that there are $4$ constants $c_j$ in the formula~\eqref{eqn:HPolytopal}, and they are all equal to the constant
\[
\binom{r+2}{2}+\left\lceil \frac{r+1}{2}\right\rceil \left( r-\left\lceil \frac{r+1}{2}\right\rceil \right)
\]
Hence by equation~\eqref{eqn:HPolytopal},
\begin{eqnarray}\label{eqn:HPQC1}
HP(C^r(\widehat{\QC}),d)= & 2d^2-6rd+6\binom{r}{2}-2+4\left(\binom{r+2}{2}+\left\lceil \frac{r+1}{2}\right\rceil \left( r-\left\lceil \frac{r+1}{2}\right\rceil \right)\right)
\end{eqnarray}

\begin{figure}[htp]

\centering
\begin{tikzpicture}
\draw (-1,-1) -- (1,-1) --(0,1) node [right] {$(0,1)$} --(0,3) node [above] {$(0,3)$}--(-3,-2) node [below] {$(-3,-2)$} -- (3,-2) node [below] {$(3,-2)$};
\node at (-1.6,-.8) {$(-1,-1)$};
\node at (1.5,-.8) {$(1,-1)$};
\draw (1,-1)--(3,-2) -- (0,3);
\draw (-3,-2)--(-1,-1)--(0,1);
\draw [fill] (-1,-1) circle [radius=.05];
\draw [fill] (-3,-2) circle [radius=.05];
\draw [fill] (1,-1) circle [radius=.05];
\draw [fill] (3,-2) circle [radius=.05];
\draw [fill] (0,1) circle [radius=.05];
\draw [fill] (0,3) circle [radius=.05];
\end{tikzpicture}
\caption{$\QC$}\label{fig:HPolytopal}
\end{figure}

By Theorem~\ref{thm:main2}, $\wp(C^r(\widehat{\QC}))\le e(\QC)-3$, where 
\[
e(\QC)=\max\limits_{\tau\in\PC^0_2}\{\sum\limits_{\gamma\in (\mbox{st}(\tau))_2} (\alpha(\gamma)+1)\}.
\]
The star of each interior edge of $\QC$ has $5$ edges which are interior.  So $e(\QC)=5(r+1)$ and the Hilbert function $HF(C^r(\widehat{\QC}),d)$ agrees with the Hilbert polynomial $HP(C^r(\widehat{\QC}),d)$ above for $d\ge 5(r+1)-2$.  Computations in Macaulay2~\cite{M2} suggest that $\wp (C^r(\widehat{\QC}))\le 2(r+1)-1$ (in fact the behavior is the same as Example~\ref{ex:SchlegelCube} in \S~\ref{sec:examples}), indicating that there is room for improvement in Theorem~\ref{thm:main2}.
\end{exm}

In \cite[Theorem~4.3]{FatPoints}, Geramita and Schenck give a formula for $HP(C^\alpha(\wDelta),d)$, where $\Delta$ is a planar simplicial complex and $\alpha$ is the vector of smoothness parameters associated to codimension one faces.

\begin{exm}\label{ex:HSimplicial}
Triangulate the polytopal complex $\QC$ in Example~\ref{ex:HPolytopal} to obtain the simplicial complex $\Delta$ below, with $f_2=7,f^0_1=9,$ and $f^0_0=3$.  Take smoothness parameters $\alpha(\tau)=2$ on the edges of the center triangle and $\alpha(\tau)=3$ on the six edges which connect interior vertices to boundary vertices.
\begin{figure}[htp]
\centering
\begin{tikzpicture}[scale=.6]
\draw (-1,-1) -- (1,-1) --(0,1) --(0,3)--(-3,-2) -- (3,-2);
\draw (1,-1)--(3,-2) -- (0,3);
\draw (-3,-2)--(-1,-1)--(0,1);
\draw (-1,-1)--(3,-2);
\draw (-3,-2)--(0,1);
\draw (0,3)--(1,-1);
\draw [fill] (-1,-1) circle [radius=.05];
\draw [fill] (-3,-2) circle [radius=.05];
\draw [fill] (1,-1) circle [radius=.05];
\draw [fill] (3,-2) circle [radius=.05];
\draw [fill] (0,1) circle [radius=.05];
\draw [fill] (0,3) circle [radius=.05];
\end{tikzpicture}
\caption{$\Delta$}\label{fig:HSimplicial}
\end{figure}
In Example~4.5 of \cite{FatPoints}, Schenck and Geramita compute
\[
HP(C^\alpha(\wDelta),d)=\binom{d+2}{2}-3\binom{d-1}{2}+3\binom{d-2}{2}+6\binom{d-3}{2}.
\]
By Theorem~\ref{thm:main3}, $\wp(C^\alpha(\wDelta))\le\max\{ M(\tau)|\tau\in\Delta^0_1\}-2$, where $M(\tau)=\alpha(\tau)+1+\max\{\alpha(\gamma_1)+1+\alpha(\gamma_2)+1|\gamma_1\neq\gamma_2 \in (\mbox{st}(\tau))_1\}$.  This yields $\wp (C^\alpha(\wDelta))\le 10$, so the polynomial above gives the correct dimension of $C^\alpha_d(\Delta)$ for $d\ge 11$.  Macaulay2 gives $\wp (C^\alpha(\wDelta))=5$, so the formula is actually correct for $d\ge 6$.
\end{exm}

\section{Splines and Lattice-Supported Splines}\label{sec:prelim}
We begin with some preliminary notions.  A \textit{polytopal complex} $\PC\subset\R^n$ is a finite set of convex polytopes (called \textit{faces} of $\PC$) in $\R^n$ such that
\begin{itemize}
\item If $\gamma\in\PC$, then all faces of $\gamma$ are in $\PC$.
\item If $\gamma,\tau\in\PC$ then $\gamma\cap\tau$ is a face of both $\gamma$ and $\tau$ (possibly empty).
\end{itemize}
The \textit{dimension} of $\PC$ is the greatest dimension of a face of $\PC$. 
The faces of $\PC$ are ordered via inclusion; a maximal face of $\PC$ is 
called a \textit{facet} of $\PC$, and $\PC$ is said to be \textit{pure} if all facets are equidimensional.  $|\PC|$ denotes the underlying space of $\PC$. $\PC_i$ and $\PC^0_i$ denote the set of $i$-faces and the set of interior $i$-faces, respectively, while $f_i=|\PC_i|$ and $f^0_i=|\PC^0_i|$.  In the case that all facets of $\PC$ are simplices, $\PC$ is a simplicial complex and will be denoted by $\Delta$.  The boundary of $\PC$, denoted $\partial\PC$, is a polytopal complex of dimension $n-1$, and is pure if $\PC$ is pure.

Given a complex $\PC$ and a face $\gamma\in\PC$, the \textit{star} of $\gamma$ in $\PC$, denoted $\mbox{st}_\PC  (\gamma)$, is defined by
\[
\mbox{st}_\PC (\gamma):=\{\psi\in\PC|\exists\sigma\in\PC, \psi\in\sigma,\gamma\in\sigma \}.
\]
This is the smallest subcomplex of $\PC$ which contains all faces which contain $\gamma$.  If the complex $\PC$ is understood we will write $\mbox{st}(\gamma)$.

For $\PC\subset\R^n$, we define $G(\PC)$ to be the graph with a vertex for every facet (element of $\PC_n$); two vertices are joined by an edge iff the corresponding facets $\sigma$ and $\sigma'$ satisfy $\sigma\cap\sigma'\in\PC_{n-1}$.  $\PC$ is said to be \textit{hereditary} if $G(\mbox{st}_\PC(\gamma))$ is connected for every nonempty $\gamma\in\PC$.  Throughout this paper, $\PC\subset\R^n$ is assumed to be a pure, $n$-dimensional, hereditary polytopal complex.

Let $R=\R[x_1,\ldots,x_n]$ be the polynomial ring in $n$ variables, and $S=\R[x_0,\ldots$ $,x_n]$ the polynomial ring in $n+1$ variables.  We will typically use $R$ in inhomogeneous and $S$ in homogeneous situations.

We now recall the definition of the ring of splines $C^r(\PC)$.  For $U\subset \R^n$, let $C^r(U)$ denote the set of functions $F:U\rightarrow \R$ continuously differentiable of order $r$.  For $F:|\PC|\rightarrow \R$ a function and $\sigma\in\PC_n$, $F_\sigma$ denotes the restriction of $F$ to $\sigma$.  The module $C^r(\PC)$ of piecewise polynomials continuously differentiable of order $r$ on $\PC$ is defined by
\[
C^r(\PC):=\{F\in C^r(|\PC|)|F_\sigma\in R \mbox{ for every } \sigma\in\PC_n\}
\]
The polynomial ring $R$ includes in $C^r(\PC)$ as globally polynomial functions (these are the \textit{trivial} splines); this makes $C^r(\PC)$ an $R$-algebra via pointwise multiplication.

If $\PC$ is a hereditary complex, the global $C^r$ condition can be expressed as a differentiability condition across internal faces of codimension one.  For a codimension one face $\tau\in\PC^0_{n-1}$, let $l_\tau$ denote a choice of affine form (unique up to scaling) which vanishes on $\tau$.  Then a function $F:|\PC|\rightarrow \R$ which restricts to a polynomial on each facet is in $C^r(\PC)$ iff $l^{r+1}_\tau|(F_{\sigma_1}-F_{\sigma_2})$ for every pair of facets $\sigma_1,\sigma_2$ which intersect in a codimension one face $\tau$~\cite{DimSeries}.

In \cite{FatPoints}, Schenck and Geramita study the dimension of \textit{mixed spline} spaces, in which the order of differentiability across codimension one faces varies.  Specifically, let $\alpha=(\alpha(\tau)|\tau\in\PC_{n-1})$ be a list of \textit{smoothness parameters} $\alpha(\tau)$ associated to each codimension one face.  We require that $\alpha(\tau)\ge 0$ for $\tau\in\PC^0_{n-1}$ and $\alpha(\tau)\ge -1$ for $\tau\in(\partial\PC)_{n-1}$.  Then the algebra $C^\alpha(\PC)$ of mixed splines on $\PC$ is defined as the set of splines $F\in C^0(\PC)$ satisfying
\begin{itemize}
\item $l^{\alpha(\tau)+1}_\tau|(F_{\sigma_1}-F_{\sigma_2})$ for $\tau\in\PC^0_{n-1}$ with $\sigma_1\cap\sigma_2=\tau$
\item $l^{\alpha(\tau)+1}_\tau|F_\sigma$ for $\tau\in(\partial\PC)_{n-1}$ with $\tau\in(\partial\sigma)_{n-1}$
\end{itemize}
For hereditary complexes, the usual ring of splines $C^r(\PC)$ is recovered by setting $\alpha(\tau)=r$ for every $\tau\in\PC^0_{n-1}$ and $\alpha(\tau)=-1$ for every $\tau\in\PC_{n-1}$.  The following variant of~\cite[Proposition~ 4.3]{DimSeries} encodes the mixed spline conditions as a matrix.
\begin{lem}\label{lem:seq1}
If $\PC$ is hereditary and $\alpha=(\alpha(\tau)|\tau\in\PC_{n-1})$, $C^\alpha(\PC)$ fits into the graded exact sequence
\[
\begin{array}{c}
0 \rightarrow C^\alpha(\PC) \rightarrow R^{f_n} \oplus \left(\bigoplus\limits_{\tau\in\PC_{n-1} } R(-\alpha(\tau)-1)\right) \xrightarrow{\phi} R^{f_{n-1}} \rightarrow C \rightarrow 0\\
\textup{where } \phi= \begin{pmatrix}
& \vline & l^{\alpha(\tau_1)+1}_{\tau_1} & & \\
\delta_n & \vline  & & \ddots & \\
 & \vline & & & l^{\alpha(\tau_k)+1}_{\tau_k}
\end{pmatrix},
\end{array}
\]
$k=f_{n-1}$, $C=\textup{coker } \phi$ and the matrix $\delta_n$ is the top dimensional cellular boundary map of $\PC$.
\end{lem}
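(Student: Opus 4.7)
The plan is to adapt the proof of \cite[Proposition~4.3]{DimSeries}, treating each codimension one face independently according to its own smoothness parameter $\alpha(\tau)$ rather than a global $r$. The exact sequence is essentially a repackaging of the divisibility conditions defining $C^\alpha(\PC)$: the factor $R^{f_n}$ records the polynomial $F_\sigma$ on each facet, the $\tau$-summand $R(-\alpha(\tau)-1)$ records the ``witness'' quotient required by the $\alpha(\tau)$-smoothness across $\tau$, and $\phi$ enforces that these witnesses multiply back to the correct differences in $R^{f_{n-1}}$.

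Concretely, I would identify $R^{f_n}$ with tuples $(F_\sigma)_{\sigma\in\PC_n}$ and $R^{f_{n-1}}$ with tuples indexed by $\PC_{n-1}$. Under suitable orientation choices, the top cellular boundary $\delta_n$ sends $(F_\sigma)$ to the tuple whose $\tau$-coordinate is $\pm(F_{\sigma_1}-F_{\sigma_2})$ for an interior face $\tau=\sigma_1\cap\sigma_2$ and $\pm F_\sigma$ for a boundary face $\tau\in(\partial\sigma)_{n-1}$. The complementary block of $\phi$ embeds $\bigoplus_\tau R(-\alpha(\tau)-1)$ diagonally via $g_\tau\mapsto l_\tau^{\alpha(\tau)+1}g_\tau$, and the shift $-\alpha(\tau)-1$ makes this a degree-zero map. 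An element $\bigl((F_\sigma),(g_\tau)\bigr)$ lies in $\ker\phi$ precisely when $l_\tau^{\alpha(\tau)+1}\mid F_{\sigma_1}-F_{\sigma_2}$ at every interior $\tau$ and $l_\tau^{\alpha(\tau)+1}\mid F_\sigma$ at every boundary $\tau\subset\sigma$. Since $l_\tau^{\alpha(\tau)+1}$ is a nonzerodivisor, each $g_\tau$ is uniquely determined by $(F_\sigma)$, so projection onto $R^{f_n}$ identifies $\ker\phi$ with the set of polynomial tuples satisfying all the mixed divisibility conditions.

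It remains to verify that such a tuple corresponds to an honest element of $C^\alpha(\PC)\subset C^0(\PC)$, and this is where hereditariness enters. The assumption $\alpha(\tau)\ge 0$ forces $l_\tau\mid F_{\sigma_1}-F_{\sigma_2}$, so the polynomials agree on $\tau$; connectedness of $G(\mbox{st}(\gamma))$ for each $\gamma\in\PC$ then propagates this local agreement into a well-defined continuous function on $|\PC|$, which by construction satisfies the mixed $C^\alpha$ divisibility conditions. Defining $C=\textup{coker }\phi$ makes the four-term sequence exact tautologically. The main bookkeeping obstacle is pinning down orientation signs in $\delta_n$ so that the construction produces genuine differences rather than sums, but divisibility is insensitive to signs, so the description of $\ker\phi$ is unaffected.
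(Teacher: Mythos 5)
Your proposal is correct and is essentially the paper's intended argument: the paper gives no independent proof, stating the lemma as a variant of \cite[Proposition~4.3]{DimSeries} and relying on exactly the adaptation you describe (kernel of $\phi$ identified with the divisibility conditions via uniqueness of the witnesses $g_\tau$, hereditariness supplying continuity across lower-dimensional faces). No gaps.
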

Since our results apply in the context of mixed splines, we will use these throughout the paper.

Let $R_{\le d}$ and $R_d$ be the set of polynomials $f\in R$ of degree $\le d$ and degree $d$, respectively.  For $\PC\subset\R^n$ we have a filtration of $C^\alpha(\PC)$ by $\R$-vector spaces
\[
C^\alpha_d(\PC):=\{F\in C^\alpha(\PC)|F_{\sigma}\in R_{\le d} \mbox{ for all facets }\sigma\in\PC_n\}.
\]
A polytopal complex $\PC$ is called a \textit{central} complex if all interior codimension one faces share a common face.  We will always assume the origin $\mathbf{0}\in\R^n$ is contained in this common face.  For central complexes
we make the assumption that $\alpha(\tau)=-1$ for all codimension one faces $\tau\in\PC_{n-1}$ so that $\mathbf{0}\notin\mbox{aff}(\tau)$.  Then the diagonal portion of the matrix $\phi$ in Lemma~\ref{lem:seq1} consists of forms of degree $\alpha(\tau)+1$ and $C^\alpha(\PC)$ is graded.  The graded pieces are the vector spaces
\[
C^\alpha(\PC)_d:=\{F\in C^\alpha(\PC)|F_{\sigma}\in R_d \mbox{ for all facets }\sigma\in\PC_n\}.
\]
Given $\PC\subset\R^n$, the cone $\wPC\subset\R^{n+1}$ over $\PC$ is formed by taking the join of $\mathbf{0}\in\R^{n+1}$ with $i(\PC)$, where $i:\R^n\rightarrow\R^{n+1}$ is defined by $i(a_1,\ldots,a_n)=(1,a_1,\ldots,a_n)$.  This is clearly a central complex.  If $\PC$ comes with smoothness parameters $\alpha$, extend these to smoothness parameters $\walpha$ on $\wPC$ by assigning
\begin{itemize}
\item $\walpha(\tau')=\alpha(\tau)$ for $\tau'\in\wPC_n$ which is a cone over $\tau\in\PC_{n-1}$ and
\item $\walpha(\tau')=-1$ for $\tau'\in\wPC_n$ so that $\mathbf{0}\notin\mbox{aff}(\tau')$
\end{itemize}
Since this extension is natural we will abuse notation and drop the hat on $\alpha$, denoting $C^{\walpha}(\wPC)$ by $C^{\alpha}(\wPC)$.  In practice one computes the algebra $C^{\walpha}(\wPC)$ by replacing the polynomial ring $R$ by $S$ in Lemma~\ref{lem:seq1} and homogenizing the entries of the matrix $\phi$ used to compute $C^\alpha(\PC)$. The following lemma is proved the same way as Theorem~2.6 of \cite{DimSeries}.

\begin{lem}\label{lem:splinehom}
Let $\PC\subset\R^n$ be a polytopal complex with smoothness parameters $\alpha$.  Then $C^\alpha_d(\PC)\cong C^{\walpha}(\wPC)_d$ as $\R$-vector spaces.
\end{lem}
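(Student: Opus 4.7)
The plan is to construct mutually inverse $\R$-linear maps between $C^\alpha_d(\PC)$ and $C^{\walpha}(\wPC)_d$ via classical homogenization and dehomogenization, reducing the compatibility of smoothness conditions to the fact that homogenization commutes with the divisibility conditions cut out by the matrix $\phi$ of Lemma~\ref{lem:seq1}.

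Given $F\in C^\alpha_d(\PC)$, for each facet $\sigma\in\PC_n$ homogenize $F_\sigma\in R_{\le d}$ to degree $d$: set
\[
F^h_{\wsigma}(x_0,x_1,\ldots,x_n) := x_0^d\, F_\sigma(x_1/x_0,\ldots,x_n/x_0)\in S_d,
\]
where $\wsigma=\mathbf{0}*i(\sigma)\in\wPC_{n+1}$. Going the other way, given $G\in C^{\walpha}(\wPC)_d$, define $G^a_\sigma(x_1,\ldots,x_n):=G_{\wsigma}(1,x_1,\ldots,x_n)\in R_{\le d}$. At the level of piecewise polynomial assignments these are clearly inverse $\R$-linear bijections, so the entire content of the lemma lies in verifying that they send splines to splines.

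For the forward direction, let $\tau\in\PC^0_{n-1}$ be shared by facets $\sigma_1,\sigma_2$, and let $l_\tau\in R$ be the affine form vanishing on $\tau$. Since $F\in C^\alpha(\PC)$ and $\deg(F_{\sigma_1}-F_{\sigma_2})\le d$, we have an identity $F_{\sigma_1}-F_{\sigma_2}=l_\tau^{\alpha(\tau)+1}\cdot g$ in $R$ with $\deg(l_\tau^{\alpha(\tau)+1}g)\le d$. Homogenizing this identity to degree $d$ and using that $l_\tau^h\in S_1$ vanishes on the cone $\wtau$, we obtain $F^h_{\wsigma_1}-F^h_{\wsigma_2}=x_0^{d-\deg(l_\tau^{\alpha(\tau)+1}g)}(l_\tau^h)^{\alpha(\tau)+1}g^h$ in $S_d$, which is the smoothness condition for $F^h$ across $\wtau$. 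By construction $\walpha(\wtau)=\alpha(\tau)$, so this is exactly what is required. The remaining codimension-one faces of $\wPC$ are the copies $i(\sigma)$ of top-dimensional faces of $\PC$; by hypothesis $\walpha(i(\sigma))=-1$, so no additional condition is imposed there. For the reverse direction, dehomogenization is the ring homomorphism $S\to R$, $x_0\mapsto 1$; it sends $l_\tau^h$ to $l_\tau$, so all divisibility conditions on $\wPC$ pull back to the corresponding divisibility conditions on $\PC$, and the resulting assignment manifestly has degree at most $d$ on each facet.

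The only mild subtlety, and the one point worth highlighting, is the behavior of degree under homogenization: one must homogenize uniformly to degree $d$ (rather than to natural degree) and account for the compensating factor of $x_0^{d-\deg(\cdot)}$, exactly as in the proof of \cite[Theorem~2.6]{DimSeries}. Once this is tracked, the extension from uniform smoothness $r$ to mixed smoothness $\alpha$ is automatic, since the divisibility conditions in Lemma~\ref{lem:seq1} are imposed face by face and each such condition homogenizes independently.
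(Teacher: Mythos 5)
Your proof is correct and follows exactly the route the paper intends: the paper gives no argument of its own for this lemma, saying only that it is ``proved the same way as Theorem~2.6 of \cite{DimSeries},'' and your homogenization/dehomogenization maps, together with the face-by-face check that each divisibility condition $l_\tau^{\alpha(\tau)+1}\mid(F_{\sigma_1}-F_{\sigma_2})$ homogenizes to the corresponding condition on $\wPC$ (with the compensating power of $x_0$), are precisely that argument extended to mixed smoothness.
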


\subsection{Lattice-Supported Splines}\label{sec:Lat}
In ~\cite{LS} certain subalgebras $LS^{r,k}(\PC)\subset C^r(\PC)$ are constructed as approximations to $C^r(\PC)$.  This construction carries over directly to mixed splines; we will denote the corresponding submodules by $LS^{\alpha,k}(\PC)$.  We briefly summarize the construction.  For a pure $n$-dimensional subcomplex $\QC\subset\PC$, not necessarily hereditary, define
\[
C^\alpha_\QC(\PC):=\{F\in C^\alpha(\PC)|F_\sigma=0 \mbox{ for all }\sigma\in\PC_n \setminus \QC_n \}.
\]
Let $\PC^{-1}\subset\partial\PC$ denote the set of faces of $\PC$ which are contained in a codimension one face $\tau$ so that $\alpha(\tau)=-1$; this is a subcomplex of $\partial\PC$.

\begin{defn}
Let $\PC\subset\R^n$ be a polytopal complex and $\alpha$ a list of smoothness parameters.
\begin{enumerate}
\item For $\tau\in\PC$ a face, $\mbox{aff}(\tau)$ denotes the affine span of $\tau$.
\item $\A(\PC,\PC^{-1})$ denotes the hyperplane arrangement $\bigcup\limits_{\substack{\tau\in\PC_{n-1}\\ \alpha(\tau)\ge 0}}\mbox{aff}(\tau)$.
\item $L_{\PC,\PC^{-1}}$ denotes the intersection semi-lattice $L(\A(\PC,\PC^{-1}))$ of $\A(\PC,\PC^{-1})$.
\end{enumerate}
\end{defn}
The elements $W\in L(\PC,\PC^{-1})$ are called \textit{flats}.  These consist of the whole space $\R^n$, the hyperplanes $\{\mbox{aff}(\tau)|\alpha(\tau)\ge 0\}$, and all nonempty intersections of these, ordered with respect to reverse inclusion.  The \textit{rank} of a flat $W$, denoted $\mbox{rk}(W)$, is its codimension as a vector space.

To each flat $W\in L(\PC,\PC^{-1})$ we associate a \textit{lattice complex} $\PC_W$ as follows.  Form a graph $G_W(\PC)$ whose vertices correspond to facets which have a codimension one face $\tau$ so that $W\subseteq \mbox{aff}(\tau)$.  Connect two vertices corresponding to facets $\sigma_1,\sigma_2$ if $\sigma_1\cap\sigma_2$ is a codimension one face of both and $W\subseteq\mbox{aff}(\sigma_1\cap\sigma_2)$.  Each connected component $G^i_W(\PC)$ of $G_W(\PC)$ is the dual graph of a unique subcomplex $\PC^i_W$.  The lattice complex $\PC_W$ is defined as the disjoint union of these $\PC^i_W$, which we call components of $\PC_W$.  Then define
\[
C^\alpha_W(\PC):=\sum_i C^\alpha_{\PC^i_W}(\PC),
\]
the submodule generated by splines which vanish outside a component of $\PC_W$.  Then $LS^{\alpha,k}(\PC)$ is defined by
\[
LS^{\alpha,k}(\PC):=\sum\limits_{\substack{W\in L_{\wPC,\wPC^{-1}}\\ 0\leq\mbox{\emph{rk}}(W)\leq k}} C^\alpha_W(\PC).
\]
It is equivalent to let the sum in the definition of $LS^{\alpha,k}(\PC)$ run across maximal components (with respect to inclusion) occuring among lattice complexes $\PC_W$ with the rank of $W$ at most $k$.  To make this more precise, let $\Gamma^k_\PC$ be the poset of components of lattice complexes $\PC_W$ with $\mbox{rk}(W)\le k$, ordered with respect to inclusion.  Let $\Gamma^{k,\textup{max}}_\PC$ be the set of maximal subcomplexes appearing in $\Gamma^k_\PC$.  Then we have
\begin{prop}~\cite[Proposition~4.9]{LS}\label{prop:top}
\[
LS^{\alpha,k}(\PC)=\sum\limits_{\QC\in\Gamma^{k,\textup{max}}_\PC} C^\alpha_\QC(\PC).
\]
\end{prop}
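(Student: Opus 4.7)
The plan is to unpack the definition of $LS^{\alpha,k}(\PC)$ and then use a simple containment lemma to reduce the indexing set to the maximal elements $\Gamma^{k,\textup{max}}_\PC$.

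First I would observe that, directly from the definitions,
\[
LS^{\alpha,k}(\PC)=\sum\limits_{\substack{W\in L_{\wPC,\wPC^{-1}}\\ 0\le\mathrm{rk}(W)\le k}}C^\alpha_W(\PC)=\sum\limits_{\substack{W\in L_{\wPC,\wPC^{-1}}\\ 0\le\mathrm{rk}(W)\le k}}\sum_i C^\alpha_{\PC^i_W}(\PC)=\sum\limits_{\QC\in\Gamma^k_\PC}C^\alpha_\QC(\PC),
\]
since $\Gamma^k_\PC$ is by construction the poset of components $\PC^i_W$ appearing as $W$ ranges over flats of rank at most $k$. Thus the only real content in the proposition is that the inner sum may be restricted to the maximal elements of this poset.

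The key step is the following monotonicity lemma, which I would record explicitly: if $\QC'\subseteq\QC$ are pure $n$-dimensional subcomplexes of $\PC$, then $C^\alpha_{\QC'}(\PC)\subseteq C^\alpha_\QC(\PC)$. Indeed, if $F\in C^\alpha_{\QC'}(\PC)$ then $F_\sigma=0$ for every $\sigma\in\PC_n\setminus\QC'_n$, and since $\PC_n\setminus\QC_n\subseteq\PC_n\setminus\QC'_n$ we get $F_\sigma=0$ for every $\sigma\in\PC_n\setminus\QC_n$, hence $F\in C^\alpha_\QC(\PC)$. No further spline-theoretic input is needed; the smoothness conditions are automatically preserved because $F$ itself is unchanged.

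Finally I would combine these: every $\QC'\in\Gamma^k_\PC$ is contained in some maximal element $\QC\in\Gamma^{k,\textup{max}}_\PC$ (the poset $\Gamma^k_\PC$ is finite), and by the lemma $C^\alpha_{\QC'}(\PC)\subseteq C^\alpha_\QC(\PC)$. Therefore
\[
\sum\limits_{\QC'\in\Gamma^k_\PC}C^\alpha_{\QC'}(\PC)=\sum\limits_{\QC\in\Gamma^{k,\textup{max}}_\PC}C^\alpha_\QC(\PC),
\]
the reverse inclusion being trivial since $\Gamma^{k,\textup{max}}_\PC\subseteq\Gamma^k_\PC$. Combined with the first display this proves the proposition.

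The argument is essentially bookkeeping once the right definitions are in place; the only point requiring care is that the decomposition $C^\alpha_W(\PC)=\sum_i C^\alpha_{\PC^i_W}(\PC)$ genuinely gives all of the components that appear in $\Gamma^k_\PC$, which is immediate from how $\PC_W$ and $\Gamma^k_\PC$ were defined. The main (very mild) obstacle is simply keeping the poset/containment bookkeeping straight; there is no subtle algebraic obstruction.
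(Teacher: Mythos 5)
Your argument is correct: the paper itself states this as a citation to \cite[Proposition~4.9]{LS} without reproducing a proof, and your proof is the natural one that the cited source uses --- unpack the definition into a sum over all of $\Gamma^k_\PC$, then absorb non-maximal terms via the monotonicity $\QC'\subseteq\QC\implies C^\alpha_{\QC'}(\PC)\subseteq C^\alpha_\QC(\PC)$, which you verify correctly from the definition of $C^\alpha_\QC(\PC)$ as splines vanishing on facets outside $\QC$. No gaps; the only cosmetic remark is that repeated occurrences of the same component for different flats $W$ are harmless since a sum of submodules is unaffected by repetition.
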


Since we will use this construction primarily in the cases $k=0$ and $k=1$, we describe $LS^{\alpha,0}(\PC)$ and $LS^{\alpha,1}$ precisely.  If $\gamma$ is a face of $\PC$ of some dimension, we use $C^\alpha_\gamma(\PC)$ and $C^\alpha_{\mbox{st}(\gamma)}(\PC)$ interchangeably to denote the subring of splines which vanish outside of the star of $\gamma$, as long as no confusion results.  So $C^\alpha_{\sigma}(\PC)$ for $\sigma\in\PC_n$ denotes the subring of splines supported on a single facet, $C^\alpha_\tau (\PC)$ for $\tau\in\PC^0_{n-1}$ denotes the ring of splines supported on the two facets of $\mbox{st}(\tau)$, etc.
\begin{cor}\label{cor:LowkDescription}
Let $\PC\subset\R^n$ be a polytopal complex.  Then 
\[
\begin{array}{rl}
LS^{\alpha,0}(\PC)= & \sum\limits_{\sigma\in\PC_n} C^\alpha_\sigma(\PC)\\
LS^{\alpha,1}(\PC)= & \sum\limits_{\tau\in\PC^0_{n-1}} C^\alpha_\tau(\PC)
\end{array}
\]
\end{cor}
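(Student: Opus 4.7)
The plan is to apply Proposition~\ref{prop:top}, which represents $LS^{\alpha,k}(\PC)$ as $\sum_{\QC\in\Gamma^{k,\textup{max}}_\PC}C^\alpha_\QC(\PC)$, and to read off the maximal subcomplexes $\Gamma^{k,\textup{max}}_\PC$ for $k=0,1$ directly from the construction of $\PC_W$.

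For $k=0$, I would note that the unique rank zero flat is the ambient space, and inspection of the graph $G_W(\PC)$ shows that the associated lattice complex breaks into its constituent facets: no two facets can be adjacent in $G_W$ at this rank, since the edge condition $W\subseteq\mbox{aff}(\sigma_1\cap\sigma_2)$ cannot be met when $W$ is everything, and the components are therefore single facets. Thus $\Gamma^{0,\textup{max}}_\PC=\{\mbox{st}(\sigma):\sigma\in\PC_n\}$, and the first equality follows immediately from Proposition~\ref{prop:top}.

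For $k=1$, every rank one flat is a hyperplane $H=\mbox{aff}(\tau_0)$ for some $\tau_0\in\PC^0_{n-1}$ with $\alpha(\tau_0)\ge 0$. The vertices of $G_H(\PC)$ are the facets possessing a codimension one face of affine span $H$, and edges record pairs of such facets glued along a codimension one face in $H$. In the generic case where $\tau_0$ is the only codimension one face with affine span $H$, the single component of $\PC_H$ is exactly $\mbox{st}(\tau_0)$. Each such star strictly contains its two constituent facets, which are precisely the rank zero components, so the rank zero components are non-maximal in $\Gamma^1_\PC$ and $\Gamma^{1,\textup{max}}_\PC=\{\mbox{st}(\tau):\tau\in\PC^0_{n-1}\}$, giving the second equality.

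The main technical point is the degenerate scenario in which several distinct faces $\tau_1,\ldots,\tau_m\in\PC^0_{n-1}$ share a common hyperplane $H$ and are chained through shared facets, so that a single component of $\PC_H$ equals the strict union $\bigcup_j\mbox{st}(\tau_j)$ rather than an isolated star. In that case one must verify the identity $C^\alpha_{\bigcup_j\mbox{st}(\tau_j)}(\PC)=\sum_j C^\alpha_{\tau_j}(\PC)$: the inclusion $\supseteq$ is automatic, and $\subseteq$ is obtained by splitting a spline supported on the union one facet at a time, using that the differences of its restrictions across each $\tau_j$ are divisible by $l_{\tau_j}^{\alpha(\tau_j)+1}$ to ensure the resulting pieces each live in $C^\alpha_{\tau_j}(\PC)$.
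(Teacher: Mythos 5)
Your argument is essentially the paper's: apply Proposition~\ref{prop:top} and identify the maximal lattice components for $k=0,1$ directly from the construction. One clarification: the ``degenerate scenario'' you flag as the main technical point cannot occur, because a chain $\sigma_1-\sigma_2-\sigma_3$ in $G_H(\PC)$ would force the convex facet $\sigma_2$ to have two distinct codimension one faces with the same affine span $H$, which is impossible (a supporting hyperplane meets a convex polytope in a single face); so the components of $\PC_{\mbox{aff}(\tau)}$ are always isolated facets or single stars and your extra splitting argument is not needed. The one point you gloss over, which the paper makes explicit, is that hereditariness (plus having more than one facet) is what guarantees every facet has an \emph{interior} codimension one face, so that the single-facet components (including those coming from boundary faces $\gamma$ with $\mbox{aff}(\gamma)=\mbox{aff}(\tau)$) are all non-maximal in $\Gamma^1_\PC$.
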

\begin{proof}
For $k=0$, the only flat $W\in L(\PC,\PC^{-1})$ of rank zero is the whole space $\R^n$.  The corresponding lattice complex $\PC_{\R^n}$ is the disjoint union of the facets of $\PC$. Hence
\[
LS^{\alpha,0}(\PC)=C^\alpha_{\R^n}(\PC)=\sum\limits_{\sigma\in\PC_n} C^\alpha_\sigma(\PC).
\]
For $k=1$, the flats $W\in L(\PC,\PC^{-1})$ of rank one are precisely the hyperplanes $\mbox{aff}(\tau)$ with $\alpha(\tau)\ge 0$, where $\tau\in\PC_{n-1}$.  The components of the lattice complex $\PC_{\mbox{aff}(\tau)}$ are the complexes $\mbox{st}(\gamma)$ for all $\gamma$ satisfying $\mbox{aff}(\gamma)=\mbox{aff}(\tau)$.  If $\gamma\in\PC^0_{n-1}$, then $\mbox{st}(\gamma)$ consists of two facets and all their faces; otherwise $\gamma\in(\partial\PC)_{n-1}$ and $\mbox{st}(\gamma)$ consists of a single facet of $\PC$ and all its faces.  However, as long as $\PC$ is hereditary and has more than one facet, every facet $\sigma\in\PC_n$ has a codimension one face $\gamma$ which is interior.  Hence $\sigma\subset\mbox{st}(\gamma)$.  It follows that $\Gamma^{1,\textup{max}}_\PC$ consists of stars of interior codimension one faces of $\PC$.  By Proposition~\ref{prop:top} we have
\[
LS^{\alpha,1}(\PC)=\sum\limits_{\tau\in\PC^0_{n-1}} C^\alpha_\tau(\PC)
\]
\end{proof}

Theorem~4.3 of~\cite{LS} makes precise the sense in which $LS^{r,k}(\PC)$ is an approximation to $C^r(\PC)$.  This result and its proof extend directly to mixed splines, so we state the result in this context.

\begin{thm}\label{thm:latticegens}
Let $\PC\subset\R^n$ be a polytopal complex.  Then $LS^{\alpha,k}(\PC)$ fits into a short exact sequence
\[
0\rightarrow LS^{\alpha,k}(\PC) \rightarrow C^\alpha(\PC) \rightarrow C \rightarrow 0
\]
where $C$ has codimension $\geq k+1$ and the primes in the support of $C$ with codimension $k+1$ are contained in the set $\{I(W)|W\in L(\PC,\PC^{-1})\mbox{ \emph{and} }\mbox{\emph{rk}}(W)=k+1\}$.
\end{thm}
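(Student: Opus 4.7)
The plan is to define $C := C^\alpha(\PC)/LS^{\alpha,k}(\PC)$ (the injection $LS^{\alpha,k}(\PC) \hookrightarrow C^\alpha(\PC)$ is immediate from the construction), so that the short exact sequence is automatic, and then to control $\mbox{Supp}(C)$ via localization. It suffices to show that the localized map $LS^{\alpha,k}(\PC)_P \to C^\alpha(\PC)_P$ is surjective for every prime $P \subset R$ satisfying either (a) $\mbox{codim}(P) \leq k$, or (b) $\mbox{codim}(P) = k+1$ and $P \neq I(W)$ for all $W \in L(\PC,\PC^{-1})$ with $\mbox{rk}(W) = k+1$.

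Given such a $P$, I would attach the flat $W_P \in L(\PC,\PC^{-1})$ of maximal rank with $I(W_P) \subseteq P$ (equivalently, the smallest flat of the arrangement containing $V(P)$; it exists and is unique since $L$ is closed under intersection). Since $V(P) \subseteq W_P$ we have $\mbox{rk}(W_P) \leq \mbox{codim}(P)$, and in both cases (a) and (b) this forces $\mbox{rk}(W_P) \leq k$: in case (b), the alternative $\mbox{rk}(W_P) = k+1$ would give $P = I(W_P)$ by equality of codimensions, contradicting the hypothesis. The key observation then follows: if $\tau \in \PC_{n-1}$ has $\alpha(\tau) \geq 0$ and $W_P \not\subseteq \mbox{aff}(\tau)$, then $l_\tau \notin P$. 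For $l_\tau \in P$ would give $V(P) \subseteq W_P \cap \mbox{aff}(\tau)$, a flat of strictly larger rank than $W_P$, contradicting maximality.

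With this in hand, given any $F \in C^\alpha(\PC)$, I decompose $F = \sum_i F^i$, where $F^i_\sigma := F_\sigma$ if the facet $\sigma$ lies in the $i$-th component $\PC^i_{W_P}$ of the lattice complex $\PC_{W_P}$, and $F^i_\sigma := 0$ otherwise. The task is to verify that each $F^i$ defines an element of $C^\alpha_{\PC^i_{W_P}}(\PC)_P$. For a codimension one face $\tau$ interior to a single component, the smoothness relation for $F^i$ coincides with that for $F$ and holds already in $R$. For a face $\tau$ separating a facet of $\PC^i_{W_P}$ from a facet outside (whether the neighbor is another facet of $\PC$ or one is bounding against $\partial \PC$), the construction of $\PC_{W_P}$ forces $W_P \not\subseteq \mbox{aff}(\tau)$, so by the observation above $l_\tau$ becomes a unit in $R_P$ and the divisibility $l^{\alpha(\tau)+1}_\tau \mid F^i_\sigma$ (or the corresponding difference) is automatic after localization. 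Since $\mbox{rk}(W_P) \leq k$, each $F^i \in C^\alpha_{\PC^i_{W_P}}(\PC)_P \subseteq LS^{\alpha,k}(\PC)_P$ directly from the definition, whence $F \in LS^{\alpha,k}(\PC)_P$ and $C_P = 0$.

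The main obstacle I anticipate is the careful bookkeeping at the boundaries of the components $\PC^i_{W_P}$: one must separately handle interior codimension one faces shared between distinct components, boundary faces $\tau \in (\partial \PC)_{n-1}$ with $\alpha(\tau) \geq 0$, and the \emph{trivial} singleton components consisting of facets with no codimension one face whose affine span contains $W_P$. Once the smoothness conditions are consistently verified in each subcase, the decomposition yields the desired localization statement and hence both claims on $\mbox{Supp}(C)$.
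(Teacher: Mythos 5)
Your proposal is correct and follows essentially the same route as the proof the paper relies on: the paper does not reprove this statement but cites \cite[Theorem~4.3]{LS}, whose argument is exactly this localization scheme --- attach to each prime $P$ of small codimension the maximal-rank flat $W_P$ with $I(W_P)\subseteq P$, observe that $l_\tau$ is a unit in $R_P$ whenever $W_P\not\subseteq\mbox{aff}(\tau)$, and split a spline along the components of $\PC_{W_P}$ (including the singleton facets) to land in $LS^{\alpha,k}(\PC)_P$. Your handling of the boundary cases and of case (b) via equality of heights is the right bookkeeping, so nothing further is needed.
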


To use the submodules $LS^{\alpha,k}(\PC)$ effectively, it will be useful to fit $LS^{\alpha,k}(\PC)$ into a chain complex whose pieces are easier to understand.  In the next section we describe such a complex.

\section{An Intersection Complex}\label{sec:Int}
In this section we introduce a Cech-type complex for finite sums of submodules of a given $S$-module $M$ and give a criterion for its exactness.  We apply this to the submodules $LS^{\alpha,k}(\PC)\subset C^\alpha(\PC)$ in \S~\ref{sec:splineint}.

For an integer $N$, let $I(k)$ be the set of all subsets of size $k\ge 1$ formed from the index set $\{1,\ldots,N\}$.  Thinking of $I\in I(k)$ as a $k$-simpex of the $N$-simplex $\Delta$, we have the complex $\Delta_\bullet(S)$ with $\Delta_k(S)=\oplus_{I\in I(k)} S$ below whose homology is the simplicial homology of $\Delta$ with coefficients in $S$.
\[
\begin{array}{lr}
\Delta_\bullet(S): & \displaystyle S \xrightarrow{\delta_{N-1}} \bigoplus_{I\in I(N-1)} S \xrightarrow{\delta_{N-2}} \ldots \xrightarrow{\delta_k} \bigoplus_{I\in I(k)} S \xrightarrow{\delta_{k-1}} \ldots \xrightarrow{\delta_1} \bigoplus_{i=1}^N S \rightarrow 0
\end{array}
\]
If $k>0$ and $e_I\in \bigoplus_{I\in I(k)} S$ is the idempotent corresponding to $I=\{i_1,\ldots,i_k\}\in I(k)$, then 
\[
\delta_k(e_I)=\sum_{j=1}^k (-1)^{j-1} e_{I\backslash i_j}
\]
It will be convenient to augment this complex with a final map $\bigoplus_{i=1}^N S \xrightarrow{\epsilon} S$ defined by $\delta_0(e_i)=1$ for every $i$.  We denote this augmented complex as $\Delta^a_\bullet(S)$.  The homology of $\Delta^a_\bullet(S)$ computes the \textit{reduced} homology of $\Delta$ with coefficients in $S$.  We extend these complexes to an $S$-module $M$ by tensoring; let $\Delta_\bullet(M)$ denote $\Delta_\bullet(S)\otimes_S M$ and $\Delta^a_\bullet(M)$ denote $\Delta^a_\bullet(S)\otimes_S M$.

Now suppose $\M=\{M_1,\ldots,M_N\}$, where each $M_i$ is a submodule of $M$.  For $I\subset\{1,\ldots,N\}$ let $M_I$ denote the intersection $\cap_{i\in I} M_i$.  Define submodules $C_k(\M)=\oplus_{I\in I(k)} M_I\subset \oplus_{I\in I(k)} M = \Delta_k(M)$.  Since $M_I \subset M_{I\backslash i}$ for every $i\in I$, the differential $\delta_k$ of $\Delta_\bullet(M)$ restricts to a map $\delta_k:C_k(\M) \rightarrow C_{k-1}(\M)$, so $C_\bullet(\M)$ is a subcomplex of $\Delta_\bullet(M)$.  For example, if $N=2$, $C_\bullet(\M)$ is the complex
\[
0\rightarrow M_{12} \xrightarrow{\delta_1} M_1 \bigoplus M_2,
\]
where $\delta_1(m)=(-m,m)$.  Given any submodule $M'\subset M$ containing all the $M_i$, we may augment $C_\bullet(\M)$ with the map
\[
\bigoplus\limits_{i=1}^N M_i \xrightarrow{\epsilon} M',
\]
where $\epsilon(m_1,\ldots,m_N)=m_1+\cdots+m_N$.  We denote this augmented complex by $C^a_\bullet(\M,M')$.

Now consider the condition $(\star)$ on $\mathcal{M}$ given by
\[
\begin{array}{ll}
(\star) & M_I\cap (\sum_{i\in T} M_i)=\sum_{i \in T} (M_I\cap M_i)\\
\vspace{5 pt}
 & \mbox{ for every pair of subsets } I,T\subset \{1,\ldots,N\}
\end{array}
\]
We only need to check this condition on subsets $I,T$ with $I\cap T=\emptyset$, since if there is some $j\in I\cap T$ then $M_I\subset M_j$ and both sides are equal to $M_I$.

\begin{prop}\label{prop:augexact}
If $\mathcal{M}=\{M_1,\ldots,M_N\}$ satisfies $(\star)$ then $H_i(C^a_\bullet(\M,M))=0$ for $i>0$ and $H_0(C^a_\bullet(\M,M))=M/(\sum_{i=1}^N M_i)$.
\end{prop}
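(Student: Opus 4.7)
The $H_0$ claim is immediate: the image of $\epsilon$ is $\sum_{i=1}^N M_i$, so by definition $H_0(C^a_\bullet(\M,M)) = M/\sum_i M_i$. The content of the proposition is the vanishing of $H_i$ for $i \geq 1$, which is equivalent to exactness of $C^a_\bullet(\M, \sum_i M_i)$. I would prove this by induction on $N$; the base case $N=1$ is the identity map $M_1 \to M_1$.

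For the inductive step, split $C_k(\M)$ according to whether $N \in I$ or not. Pieces with $N \notin I$ form the subcomplex $C_\bullet(\M')$ where $\M' = \{M_1, \ldots, M_{N-1}\}$, while the complementary pieces, indexed by $I' \cup \{N\}$ with $I' \subseteq \{1,\ldots,N-1\}$, form a shifted copy of $C_\bullet(\M'')$ for $\M'' = \{M_1 \cap M_N, \ldots, M_{N-1} \cap M_N\}$. Setting $K = \sum_{i<N} M_i$ and $L = \sum_{i<N}(M_i \cap M_N)$, condition $(\star)$ applied with $I = \{N\}$ and $T = \{1,\ldots,N-1\}$ gives $K \cap M_N = L$; this is exactly what makes the augmentations compatible, yielding a short exact sequence of augmented complexes
\[
0 \to C^a_\bullet(\M', K) \to C^a_\bullet(\M, K + M_N) \to Q^a_\bullet \to 0,
\]
where $Q^a_\bullet$ has $M_N/L$ in position $0$ and the shifted pieces of $C_\bullet(\M'')$ above.

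The main obstacle is to verify that $\M''$ inherits $(\star)$ from $\M$, so that the inductive hypothesis applies to both $\M'$ and $\M''$. Setting $M_i'' = M_i \cap M_N$ and noting $\bigcap_{i \in I} M_i'' = M_{I \cup \{N\}}$, a first use of $(\star)$ (with $I=\{N\}$) gives $\sum_{j \in T} M_j'' = M_N \cap \sum_{j \in T} M_j$. Since $M_{I \cup \{N\}} \subseteq M_N$, intersecting both sides with $M_{I \cup \{N\}}$ yields $M_{I \cup \{N\}} \cap \sum_{j \in T} M_j$, and a second application of $(\star)$ (with $I$ replaced by $I \cup \{N\}$) rewrites this as $\sum_{j \in T} M_{I \cup \{N,j\}} = \sum_{j \in T}(M_I'' \cap M_j'')$, establishing $(\star)$ for $\M''$.

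By induction, $C^a_\bullet(\M', K)$ and $C^a_\bullet(\M'', L)$ are exact. The exactness of $Q^a_\bullet$ follows: in degrees $k \geq 2$ the complex agrees (after the shift) with $C_\bullet(\M'')$ in degrees $\geq 1$, which is exact by the inductive hypothesis; at position $1$ the induced differential $\bigoplus_i (M_i \cap M_N) \to M_N$ is the sum map, with image $L$ coinciding with the kernel of the quotient $M_N \to M_N/L$; and at position $0$ this quotient is surjective. The long exact sequence in homology associated to the short exact sequence above then forces $C^a_\bullet(\M, K + M_N) = C^a_\bullet(\M, \sum_i M_i)$ to be exact, completing the induction.
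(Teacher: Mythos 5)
Your proof is correct and follows essentially the same route as the paper: induction on $N$, splitting off the index $N$ via a short exact sequence of complexes, and the key verification that the family $\{M_i\cap M_N\}$ inherits $(\star)$. The only difference is cosmetic --- you augment over the sums $K$ and $K+M_N$ so that the quotient complex (with $M_N/L$ in degree $0$) is itself exact, whereas the paper augments over $M$ and instead identifies $H_1$ of its quotient complex with the kernel of the map on $H_0$ in the long exact sequence; the underlying computation is identical.
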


\begin{proof}
The assertion $H_0(C^a_\bullet(\M,M))=M/(\sum_{i=1}^N M_i)$ is always true, so we prove $H_i(C^a_\bullet(\M,M))=0$ for $i>0$.  We proceed by induction on the cardinality $N$ of $\mathcal{M}$.  If $N=2$ then $C^a_\bullet(\M,M)$ is the complex
\[
0\rightarrow M_{12} \xrightarrow{\delta_1} M_1\oplus M_2 \rightarrow M \rightarrow 0
\]
which satisfies the conclusion of Proposition~\ref{prop:augexact}.  Now suppose $N>2$.  Let $\M'=\{M_1,\ldots,M_{N-1}\}$ and $\N=\{M_{1,N},\ldots, M_{N-1,N}\}$, where $M_{i,j}=M_i\cap M_j$.  We have a short exact sequence of complexes $0\rightarrow C^a_\bullet(\M',M) \rightarrow C^a_\bullet(\M,M) \rightarrow C^a_\bullet(\N,M_N)(-1) \rightarrow 0$, shown below.  Here $C(i)$ denotes the complex $C$ with shifted grading $C(i)_j=C_{i+j}$.  This short exact sequence follows from the fact that $C^a_\bullet(\M,M)$ can be constructed as the mapping cone of the (appropriately signed) inclusion $C^a_\bullet(\N,M_N)\hookrightarrow C^a_\bullet(\M',M)$.  It is also not difficult to check exactness of this sequence directly.

\begin{flushleft}
\begin{tikzcd}
& & 0 \ar{d} &0\ar{d} & 0 \ar{d} \\
C^a_\bullet(\M',M) & 0 \ar{r}\ar{d} & M_{1,\ldots,N-1} \ar{d} \ldots\ar{r}{\delta'_{N-1}} & \bigoplus\limits_{i=1}^{N-1} M_i \ar{r}{\delta'_1}\ar{d} & M \ar{d} \\
C^a_\bullet(\M,M) & M_{1,\ldots,N} \ar{r}{\delta_N}\ar{d} & \bigoplus\limits_{I\in I(N-1)} M_I \ar{d} \ldots \ar{r}{\delta_2} & \bigoplus\limits_{i=1}^N M_i \ar{r}{\delta_1}\ar{d} & M \ar{d} \\
C^a_\bullet(\N,M_N)(-1) & M_{1,\ldots,N} \ar{r}{\delta''_N}\ar{d} & \bigoplus\limits_{\substack{I\in I(N-1)\\ N\in I}} M_I \ar{d} \ldots \ar{r}{\delta''_2} & M_N \ar{r}{\delta''_1}\ar{d} & 0 \\
& 0 &0 & 0 &
\end{tikzcd}
\end{flushleft}

Clearly $\M'$ satisfies $(\star)$, inheriting the necessary conditions from the fact that $\M$ satisfies $(\star)$.  Since $|\M'|=N-1$, $H_i(C^a_\bullet(\M',M))=0$ for $i>0$ by induction.  We claim $\mathcal{N}$ also satisfies $(\star)$.  Interpreted for the set $\mathcal{N}$, the condition $(\star)$ is
\[
\begin{array}{ll}
(\star\star) & M_{I\cup {N}} \cap (\sum_{i\in T} M_{i,N})=\sum_{i \in T} M_{I\cup{N}}\cap M_i\\
\vspace{5 pt}
 & \mbox{ for every pair of subsets }I, T\subset \{1,\ldots,N-1\}
\end{array}
\]
First note that for any subset $T\subset\{1,\ldots,N-1\}$, $\sum_{i\in T} M_{i,N}=M_N\cap (\sum_{i\in T} M_i)$ since $\M$ satisfies $(\star)$.  So the left hand side of $(\star\star)$ is equivalent to $M_{I\cup {N}} \cap (\sum_{i\in T} M_i)$.  Again, since $\M$ satisfies $(\star)$, $M_{I\cup {N}} \cap (\sum_{i\in T} M_i)=\sum_{i\in T} M_{I\cup{N}} \cap M_i$.  So $\N$ satisfies $(\star)$.  Since $|\N|=N-1$, $H_i(C^a_\bullet(\N,M_N)(-1))=0$ for $i>1$ by induction.  It follows from the long exact sequence in homology that
\[
H_i(C^a_\bullet(\M,M))=0
\]
for $i>1$.  For $i=1$ we have the exact sequence

\begin{center}
\begin{tikzcd}
0 \rar
& H_1(C^a_\bullet(\M,M)) \rar & H_1(C^a_\bullet(\N,M_N)(-1)) 
\ar[out=0,in=180,looseness=2,overlay]{dl} & \\
& H_0(C^a_\bullet(\M',M)) \rar & H_0(C^a_\bullet(\M,M)) \rar & 0
\end{tikzcd}
\end{center}

We have
\[
\begin{array}{rl}
H_1(C^a_\bullet(\N,M_N)(-1))= & \dfrac{M_N}{\sum_{i=1}^{N-1} M_i\cap M_N}\\[15 pt]
=&\dfrac{M_N}{M_N\cap(\sum_{i=1}^{N-1} M_i)}\\[15 pt]
=&\dfrac{M_N + \sum_{i=1}^{N-1}M_i}{\sum_{i=1}^{N-1} M_i},
\end{array}
\]
where the second equality comes from the fact that $\M$ satisfies $(\star)$.  But this is precisely the kernel of the natural surjection
\[
H_0(C^a_\bullet(\M',M))=\dfrac{M}{\sum_{i=1}^{N-1}M_i} \rightarrow \dfrac{M}{\sum_{i=1}^N M_i}=H_0(C^a_\bullet(\M,M)).
\]
It follows that $H_1(C^a_\bullet(\M,M))=0$ and we are done.
\end{proof}

\begin{cor}\label{cor:exactintC}
Let $\M=\{M_1,\ldots,M_N\}$ be a set of submodules of $M$.  Then if $\M$ satisfies $(\star)$,
\[
C_\bullet(\M)\rightarrow \sum\limits_{i=1}^N M_i \rightarrow 0
\]
is exact.
\end{cor}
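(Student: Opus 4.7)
The plan is to deduce the corollary as an immediate consequence of Proposition~\ref{prop:augexact}, by taking the ambient module to be the submodule $M' := \sum_{i=1}^N M_i \subset M$ rather than $M$ itself.

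First, I would verify that this substitution is legal: each $M_i$ sits inside $M'$, so the augmented complex $C^a_\bullet(\M, M')$ is defined in the sense of the paragraph preceding condition $(\star)$. The intersections $M_I = \bigcap_{i \in I} M_i$ are intrinsic to the family $\M$, so the unaugmented complex $C_\bullet(\M)$ is literally the same object whether the $M_i$ are regarded as submodules of $M$ or of $M'$. Likewise, the hypothesis $(\star)$ refers only to intersections and finite sums of the $M_i$, and so continues to hold verbatim for $\M$ viewed as a family of submodules of $M'$.

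Next, I would apply Proposition~\ref{prop:augexact} in this new setting. It yields $H_i(C^a_\bullet(\M, M')) = 0$ for $i > 0$, together with
\[
H_0(C^a_\bullet(\M, M')) \;=\; M' \Big/ \sum_{i=1}^N M_i \;=\; 0,
\]
since $M'$ was chosen to be exactly $\sum_{i=1}^N M_i$. Hence $C^a_\bullet(\M, M')$ is exact at every position, which is precisely the claim that
\[
C_\bullet(\M) \longrightarrow \sum_{i=1}^N M_i \longrightarrow 0
\]
is exact.

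There is essentially no obstacle here, since the corollary is a one-line consequence of Proposition~\ref{prop:augexact} once one observes that both the hypothesis $(\star)$ and the complex $C_\bullet(\M)$ depend only on the family $\M$ and not on the ambient module. The only mild point requiring a sentence of justification is the invariance of $(\star)$ under shrinking the ambient module from $M$ to $M'$, and this is immediate from the form of $(\star)$.
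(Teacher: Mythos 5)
Your proposal is correct and is essentially the paper's (implicit) argument: the corollary is an immediate consequence of Proposition~\ref{prop:augexact}, and your observation that $(\star)$ and the complex $C_\bullet(\M)$ depend only on the family $\M$ (so the ambient module may be shrunk to $M'=\sum_i M_i$, killing $H_0$) is a clean way to phrase that deduction.
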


\section{Intersection complex for splines}\label{sec:splineint}

In this section we apply the complex constructed in \S~\ref{sec:Int} to the case of lattice-supported splines.  Recall from \S~\ref{sec:Lat} that $\Gamma^k_\PC$ is the poset of components appearing among lattice complexes of the form $\PC_W$ with the rank of $W$ at most $k$, ordered with respect to inclusion.  $\Gamma^{k,\textup{max}}_\PC$ is the set of maximal complexes appearing in $\Gamma^{k}_\PC$.  With this notation, Proposition~\ref{prop:top} states
\[
LS^{\alpha,k}(\PC):=\sum\limits_{\OC\in\Gamma^{k,\textup{max}}_\PC} C^\alpha_\OC(\PC),
\]
where $C^\alpha_\OC(\PC)\subset C^\alpha(\PC)$ is the subalgebra of splines vanishing outside of $\OC$.

Now set $\M_k=\{C^\alpha_{\QC}(\PC)|\QC\in\Gamma^{k,\textup{max}}_\PC\}$.  $LS^{\alpha,k}(\PC)$ fits into the complex
\[
C_\bullet(\M_k)\rightarrow LS^{\alpha,k}(\PC).
\]
If $\QC,\OC$ are pure $n$-dimensional subcomplexes of $\PC\subset\R^n$, then
\[
C^\alpha_{\QC}(\PC)\bigcap C^\alpha_{\OC}(\PC)=C^\alpha_{\QC\cap\OC}(\PC).
\]
This extends to any finite intersection, hence we may write
\[
C_i(\M_k)=\bigoplus\limits_{\QC} C^\alpha_{\QC}(\PC),
\]
where $\QC$ runs across all intersections of $i$ subcomplexes from $\Gamma^{k,\mbox{\emph{max}}}_\PC$.  As will be evident below, the same subcomplex can appear multiple times as an intersection.  We prove exactness of $C_\bullet(\M_1)$.
\begin{prop}\label{prop:fakeres}
Let $\M_1=\{C^\alpha_\tau(\PC)|\tau\in\PC^0_{n-1}\}$.  Then the augmented complex
\[
C_\bullet(\M_1)\rightarrow LS^{\alpha,1}(\PC)\rightarrow 0
\]
is exact.
\end{prop}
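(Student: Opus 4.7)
The plan is to apply Corollary~\ref{cor:exactintC}: by Corollary~\ref{cor:LowkDescription} the augmented target $\sum_\tau C^\alpha_\tau(\PC)$ equals $LS^{\alpha,1}(\PC)$, so the task reduces to verifying condition $(\star)$ for $\M_1 = \{C^\alpha_\tau(\PC):\tau\in\PC^0_{n-1}\}$. For any $I\subseteq\PC^0_{n-1}$, one checks that $M_I:=\bigcap_{\tau\in I} C^\alpha_\tau(\PC) = C^\alpha_{\OC_I}(\PC)$, where $\OC_I$ is the pure $n$-dimensional subcomplex whose facets lie in every $\mbox{st}(\tau)$ with $\tau\in I$. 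Since each $\mbox{st}(\tau)$ for $\tau\in\PC^0_{n-1}$ contains exactly two facets, and any two facets of $\PC$ intersect in a single face (so share at most one codim-one face), $\OC_I$ contains at most one facet whenever $|I|\geq 2$.

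Assuming $I\cap T=\emptyset$ (the other case is trivial), the inclusion $\sum_\tau(M_I\cap M_\tau)\subseteq M_I\cap\sum_\tau M_\tau$ is automatic. For the reverse, take $f=\sum_{\tau\in T} f_\tau$ with $f_\tau\in C^\alpha_\tau(\PC)$ and $f$ vanishing outside $\OC_I$; the goal is a decomposition $f=\sum_\tau g_\tau$ with $g_\tau\in C^\alpha_{\OC_I\cap\mbox{st}(\tau)}(\PC)$. When $|I|=0$ (so $M_I=C^\alpha(\PC)$) or $|I|\geq 2$ (where $M_I$ is either zero or $C^\alpha_{\sigma_*}(\PC)$ for a single facet $\sigma_*$, and one may set $g_{\tau_0}=f$ for any $\tau_0\in T$ with $\sigma_*\in\mbox{st}(\tau_0)$), the decomposition is immediate. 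The main case is $|I|=1$: write $I=\{\tau_0\}$ and $\OC_I=\{\sigma,\sigma'\}$ with $\sigma\cap\sigma'=\tau_0$. I plan to define $g\in C^\alpha_\sigma(\PC)$ and $g'\in C^\alpha_{\sigma'}(\PC)$ by $g|_\sigma=f|_\sigma$, $g'|_{\sigma'}=f|_{\sigma'}$, and zero on all other facets; the remaining task is to verify the divisibility $l_{\tau''}^{\alpha(\tau'')+1}\mid f|_\sigma$ for every codim-one face $\tau''$ of $\sigma$ with $\alpha(\tau'')\geq 0$ (and symmetrically for $\sigma'$).

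When $\tau''$ is on the boundary of $\PC$, this divisibility is inherited directly from $f\in C^\alpha(\PC)$. When $\tau''$ is interior and its opposite facet does not lie in $\OC_I$, $f$ vanishes on that facet and the spline condition gives the divisibility. The only remaining case is $\tau''=\tau_0$, handled by the following key observation: for each $\tau\in T$ and each $\sigma''\in\mbox{st}(\tau)$, if $\tau''$ is a codim-one face of $\sigma''$ other than $\tau$, then the opposite facet across $\tau''$ cannot also lie in $\mbox{st}(\tau)$ (else two distinct facets of $\PC$ would share two codim-one faces); hence $f_\tau$ vanishes there, and the spline condition on $f_\tau$ across $\tau''$ yields $l_{\tau''}^{\alpha(\tau'')+1}\mid f_\tau|_{\sigma''}$. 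Applied with $\tau''=\tau_0$ (valid since $\tau_0\in I$ is disjoint from $T$), this gives $l_{\tau_0}^{\alpha(\tau_0)+1}\mid f_\tau|_\sigma$ for every $\tau\in T$ with $\sigma\in\mbox{st}(\tau)$, and summing, $l_{\tau_0}^{\alpha(\tau_0)+1}\mid f|_\sigma$. The main obstacle is precisely this key divisibility step, which crucially uses the polytopal-complex axiom that faces intersect in single faces; once it is in hand the decomposition is assembled easily, placing $f|_\sigma$ into a single summand $g_\tau$ for a chosen $\tau\in T$ with $\sigma\in\mbox{st}(\tau)$.
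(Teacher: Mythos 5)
Your proof is correct and follows essentially the same route as the paper's: both verify condition $(\star)$ and invoke Corollary~\ref{cor:exactintC}, with the only substantive case being a single star $\mbox{st}(\tau_0)$ with $\tau_0\notin T$, where one shows each $f_\tau|_\sigma$ is divisible by $l_{\tau_0}^{\alpha(\tau_0)+1}$ (because $\mbox{st}(\tau)$ cannot contain both facets of $\mbox{st}(\tau_0)$ without violating the face-intersection axiom), splits $f$ into splines supported on single facets, and then distributes as in the facet case. You simply spell out the divisibility step that the paper states tersely as ``$F$ must vanish along $\tau$ to order $\alpha(\tau)$,'' so the two arguments coincide.
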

\begin{proof}
We show that $\M_1$ satisfies the condition $(\star)$ from the previous section; then by Corollary~\ref{cor:exactintC} the proposition will be proved.  First suppose given $m>1$ codimension one faces $\tau_1,\ldots,\tau_m$.  If these are all faces of a common facet $\sigma$, then $\mbox{st}(\tau_1)\cap\ldots\cap\mbox{st}(\tau_m)=\sigma$.  Otherwise, this intersection has dimension less than $n$ and no splines are defined on it.  Hence to show $(\star)$ for $\M_1$ amounts to showing that, given a set $T=\{\tau_1,\ldots,\tau_n\}$ of codimension one faces of $\PC$, the following equalities hold.  Keep in mind that for two subcomplexes $\OC,\QC$, $C^\alpha_{\OC}(\PC)\cap C^\alpha_{\QC}(\PC)=C^\alpha_{\OC\cap\QC}(\PC)$.
\begin{enumerate}
\item For any facet $\sigma\in\PC_n$,
\[
C^\alpha_\sigma(\PC)\bigcap\left(\sum\limits_{i=1}^n C^\alpha_{\mbox{st}(\tau_i)}(\PC)\right)=\sum\limits_{i=1}^n C^\alpha_{\sigma\cap\mbox{st}(\tau_i)}(\PC),
\]
\item For any codimension one face $\tau\in\PC^0_{n-1}$,
\[
C^\alpha_{\mbox{st}(\tau)}(\PC)\bigcap\left(\sum\limits_{i=1}^n C^\alpha_{\mbox{st}(\tau_i)}(\PC)\right)=\sum\limits_{i=1}^n C^\alpha_{\mbox{st}(\tau)\cap\mbox{st}(\tau_i)}(\PC)
\]
\end{enumerate}
(1) If $\sigma\subset\mbox{st}(\tau_i)$ for some $\tau_i\in T$, then both sides are equal to $C^\alpha_\sigma(\PC)$.  Otherwise both sides are trivial.
(2) If $\tau\in T$, then both sides are equal to $C^\alpha_\tau(\PC)$.  Otherwise, set $C^\alpha_T(\PC)=\sum_{i=1}^n C^\alpha_{\mbox{st}(\tau_i)}(\PC)$ and let $F\in C^\alpha_{\mbox{st}(\tau)}(\PC)\cap C^\alpha_T(\PC)$.  Since $\tau\notin T$, $F$ must vanish along $\tau$ to order $\alpha(\tau)$.  Letting $\sigma_1,\sigma_2$ be the two facets of $\mbox{st}(\tau)$, we see $F|_{\sigma_i}\in C^\alpha_{\sigma_i}(\PC)$ for $i=1,2$.  It follows that
\[
C^\alpha_{\tau}(\PC)\cap C^\alpha_T(\PC)=C^\alpha_{\sigma_1}(\PC)\cap C^\alpha_T(\PC)+C^\alpha_{\sigma_2}(\PC)\cap C^\alpha_T(\PC).
\]
Now by (1) the intersections $C^\alpha_{\sigma_i}(\PC)\cap C^\alpha_T(\PC)$ distribute.
\end{proof}

\begin{remark}
It would be interesting to know if Proposition~\ref{prop:fakeres} holds for any $\M_k$, where $k>1$.
\end{remark}

\begin{prop}\label{prop:M1Res}
Let $\M_1=\{C^\alpha_\tau(\PC)|\tau\in\PC^0_{n-1}\}$, where $\PC$ is a pure $n$-dimensional hereditary polytopal complex.  For a facet $\sigma\in\PC_n$, let $\partial^0(\sigma)$ denote the set of codimension one faces of $\sigma$ that are interior faces of $\PC$.  Set $\delta(\PC)=\max_{\sigma\in\PC_n}\{|\partial^0(\sigma)|\}$.  The complex $C_\bullet(\M_1)$ satisfies
\[
C_k(\M_1)=\left\lbrace
\begin{array}{ll}
\bigoplus\limits_{\tau\in\PC^0_{n-1}} C^\alpha_{\tau}(\PC) & \mbox{if }k=1\\
\bigoplus\limits_{|\partial^0(\sigma)|\ge k}\left( C^\alpha_{\sigma}(\PC)\right)^{\binom{|\partial^0(\sigma)|}{k}} & \mbox{if } 2\le k\le \delta(\PC)\\
0 & \mbox{if }k>\delta(\PC)
\end{array}
\right.
\]
\end{prop}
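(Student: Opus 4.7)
The plan is to unpack the definition of $C_k(\M_1)$ and read off its structure from the geometry of intersecting stars. Using the identity $\bigcap_{i\in I}C^\alpha_{\tau_i}(\PC)=C^\alpha_{\bigcap_{i\in I}\mbox{st}(\tau_i)}(\PC)$ recorded at the start of \S\ref{sec:splineint}, we have
\[
C_k(\M_1)=\bigoplus_{\{\tau_1,\ldots,\tau_k\}\subseteq\PC^0_{n-1}}C^\alpha_{\mbox{st}(\tau_1)\cap\cdots\cap\mbox{st}(\tau_k)}(\PC),
\]
summed over $k$-element subsets of $\PC^0_{n-1}$. The $k=1$ case reads off immediately. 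For $k>\delta(\PC)$, no facet of $\PC$ has $k$ distinct interior codimension one faces in its boundary, so every such intersection misses every $n$-face of $\PC$ and every summand vanishes. The real work is in the middle range $2\le k\le\delta(\PC)$.

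The key geometric claim for $k\ge 2$ is that at most one facet of $\PC$ can contain a prescribed pair of distinct codimension one faces. To see this, suppose distinct facets $\sigma,\sigma'$ both contain $\tau_1\ne\tau_2$ as codimension one faces. By the polytopal complex axiom $\sigma\cap\sigma'$ is a single face of both; since it contains the $(n-1)$-face $\tau_1$ of $\sigma$, its dimension is $n-1$ or $n$. The dimension $n$ case forces $\sigma=\sigma'$, so $\sigma\cap\sigma'$ is an $(n-1)$-face of $\sigma$ containing $\tau_1$, hence equals $\tau_1$. By the symmetric argument applied to $\sigma'$, the same intersection equals $\tau_2$, contradicting $\tau_1\ne\tau_2$.

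Granted the uniqueness claim, for distinct $\tau_1,\ldots,\tau_k\in\PC^0_{n-1}$ the intersection $\mbox{st}(\tau_1)\cap\cdots\cap\mbox{st}(\tau_k)$ contains at most one $n$-face $\sigma$ of $\PC$, and such a $\sigma$ appears iff $\{\tau_1,\ldots,\tau_k\}\subseteq\partial^0(\sigma)$. Since $C^\alpha_\QC(\PC)$ depends only on the $n$-faces of $\QC$, each summand of $C_k(\M_1)$ is either zero or $C^\alpha_\sigma(\PC)$ for the unique such facet. Grouping summands by this facet, each $\sigma$ with $|\partial^0(\sigma)|\ge k$ contributes one copy of $C^\alpha_\sigma(\PC)$ for every $k$-subset of $\partial^0(\sigma)$, giving the claimed multiplicity $\binom{|\partial^0(\sigma)|}{k}$, while facets with too few interior codimension one faces contribute nothing. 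The main obstacle is the uniqueness step; everything else is definition-unpacking and combinatorial bookkeeping.
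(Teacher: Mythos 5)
Your proof is correct and follows essentially the same route as the paper: unpack $C_k(\M_1)$ as a direct sum over $k$-fold intersections of stars, observe that for $k\ge 2$ such an intersection contains at most one facet, and count multiplicities by $k$-subsets of $\partial^0(\sigma)$. The only difference is that you supply an explicit argument (via the polytopal complex intersection axiom) for the "at most one facet" claim, which the paper simply asserts; this is a welcome detail but not a different approach.
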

\begin{proof}
By definition $C_1(\M_1)$ is the direct sum of all the submodules of $\M_1$.  In general we have
\[
C_k(\M_1)=\bigoplus_{\QC} C^\alpha_{\QC}(\PC),
\]
where the direct sum runs over all subcomplexes $\QC\subset\PC$ which are intersections of $k$ distinct subcomplexes chosen from the set $\{\mbox{st}(\tau)|\tau\in\PC^0_{n-1}\}$.  If $k\ge 2$ then $\QC$ is the intersection of two or more stars of codimension one faces, say $\mbox{st}(\tau_1),\mbox{st}(\tau_2),\ldots,\mbox{st}(\tau_k)$.  Hence $\QC$ contains at most one facet, and that facet must have $\tau_1,\ldots,\tau_k$ as faces.  So if $k\ge 2$,
\[
C_k(\M_1)=\bigoplus\limits_{|\partial^0(\sigma)|\ge k}\left( C^\alpha_{\sigma}(\PC)\right)^{\binom{|\partial^0(\sigma)|}{k}},
\]
where $|\partial^0(\sigma)|$ is the number of edges of $\sigma$ which are interior to $\PC$.  From this we also see that $C_k(\M_1)=0$ for $k>\delta(\PC)$.
\end{proof}

\section{Regularity}\label{sec:regularity}
In this section we briefly summarize some commutative algebra.  The first chapter of~\cite{Syz} is an excellent introduction to the graded approach we take here.  Most of the material of this section comes from this source.

Let $M$ be a graded module over the polynomial ring $S=k[x_0,\ldots,x_n]$, where $k$ is a field of characteristic $0$.  Let $HF(M,d)=\dim_k M_d$ denote the Hilbert function of $M$ in degree $d$.  A standard result states that for $d\gg 0$, $HF(M,d)$ agrees with a polynomial function $HP(M,d)$, the Hilbert polynomial of $M$.  As we noted in the introduction, the largest integer $d$ for which $HF(M,d)\neq HP(M,d)$ is called the \textit{postulation number} of $M$, denoted by $\wp(M)$.  The degree of $HP(M,d)$ is one less than the Krull dimension of $M$, denoted $\mbox{dim}(M)$.  The \textit{codimension} of $M$ is defined by $\mbox{dim}(S)-\mbox{dim}(M)=n+1-\mbox{dim}(M)$.  $M$ has a minimal graded free resolution
\[
F_\bullet: \mbox{ }0\rightarrow F_\delta \xrightarrow{\phi_r} F_{r-1}\xrightarrow{\phi_{r-1}} \cdots \xrightarrow{\phi_1} F_0,
\]
with $\mbox{coker }\phi_1=M$.  The index $\delta$ of the final free module appearing in this resolution is called the projective dimension of $M$, denoted $\mbox{pd}(M)$.  For an integer $a$, let $S(a)$ denote the polynomial ring with grading shifted by $a$, so $S(a)_d\cong S_{a+d}$.  An important invariant of the module $M$ which (among other things) governs when $HF(M,d)$ becomes polynomial is the \textit{Castelnuovo-Mumford regularity} of $M$.
\begin{defn}\label{defn:regdef}
Let $M$ be a graded $S$ module and $F_\bullet \rightarrow M$ the minimal free resolution of $M$, with $F_i\cong \bigoplus\limits_{j} S(-a_{ij})$.  The Castelnuovo-Mumford regularity of $M$, denoted $\reg(M)$, is defined by
\[
\reg(M)=\max_{i,j}\{a_{i,j}-i\}.
\]
\end{defn}

\begin{remark}\label{rm:degrees}
Note that, according to this definition, $\reg(M)$ bounds the minimal degree of generators of $M$ as an $S$-module.
\end{remark}

From Definition~\ref{defn:regdef} one derives the following theorem.  Recall an $S$-module $M$ is \textit{Cohen-Macaulay} if $\mbox{codim}(M)=\mbox{pd}(M)$.
\begin{thm}~\cite[Theorem~4.2]{Syz}\label{thm:HFHP}
Let $M$ be a finitely generated graded module over $S$.  Then 
\begin{enumerate}
\item $HF(M,d)=HP(M,d)$ for $d\ge \reg(M)+\mbox{pd}(M)-n$.  Equivalently, $\wp(M)\le \reg(M)+\mbox{pd}(M)-n-1$.
\item If $M$ is a Cohen-Macaulay module, the bound in (1) is sharp.
\end{enumerate}
\end{thm}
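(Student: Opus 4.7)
The plan is to read off both the Hilbert function and the Hilbert polynomial of $M$ from a minimal graded free resolution, and then pinpoint the degree in which these two expressions must coincide purely for elementary combinatorial reasons.

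First I would fix a minimal graded free resolution
\[
0 \to F_{\mbox{pd}(M)} \xrightarrow{\phi_{\mbox{pd}(M)}} \cdots \to F_1 \xrightarrow{\phi_1} F_0 \to M \to 0,
\]
with $F_i = \bigoplus_j S(-a_{ij})$, and take alternating sums of Hilbert functions to get
\[
HF(M,d) = \sum_{i,j} (-1)^i HF(S(-a_{ij}),d) = \sum_{i,j}(-1)^i \binom{d-a_{ij}+n}{n}_{\!+},
\]
where I write $\binom{m}{n}_{+}$ for the Hilbert function of $S(-a)$ in degree $d$ (so $\binom{m}{n}_{+}=\binom{m}{n}$ if $m\ge 0$ and $0$ otherwise). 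The polynomial $P_a(d):=\binom{d-a+n}{n}$, regarded as a polynomial in $d$, has roots exactly at $d=a-1,a-2,\ldots,a-n$. Consequently $HF(S(-a),d)=P_a(d)$ precisely when $d\ge a-n$, with the two disagreeing for $d\le a-n-1$. Summing alternately gives $HP(M,d)=\sum_{i,j}(-1)^i P_{a_{ij}}(d)$, and the key observation is that $HF(M,d)=HP(M,d)$ as soon as $d\ge a_{ij}-n$ for every pair $(i,j)$.

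To turn this into the claimed bound in (1), I invoke the definition of regularity: $a_{ij}\le \reg(M)+i \le \reg(M)+\mbox{pd}(M)$ for all $i,j$. Hence $d\ge \reg(M)+\mbox{pd}(M)-n$ forces $d\ge a_{ij}-n$ for every $(i,j)$, and the two expressions coincide. Equivalently $\wp(M)\le \reg(M)+\mbox{pd}(M)-n-1$.

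For (2), suppose $M$ is Cohen--Macaulay, so $\mbox{pd}(M)=\mbox{codim}(M)=n+1-\mbox{dim}(M)$. I would show sharpness by producing a degree $d^{\ast}=\reg(M)+\mbox{pd}(M)-n-1$ at which $HF$ and $HP$ differ. Fix an index $(i_0,j_0)$ realizing $a_{i_0 j_0}-i_0=\reg(M)$; then $d^{\ast}=a_{i_0 j_0}-n-1$, and this is precisely the largest degree where $P_{a_{i_0 j_0}}(d)$ disagrees with $HF(S(-a_{i_0 j_0}),d)$. The main obstacle is excluding accidental cancellation: one must rule out that contributions from other summands $(i,j)$ with $a_{ij}-n-1\ge d^{\ast}$ cancel this discrepancy. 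In the Cohen--Macaulay case this can be done by appealing to local duality (or equivalently, to the fact that the highest shift $a_{i_0 j_0}$ with $i_0=\mbox{pd}(M)$ contributes to $H^{\mbox{dim}(M)}_{\mathfrak{m}}(M)$ in the extremal degree), which shows the difference $HF(M,d^{\ast})-HP(M,d^{\ast})$ equals, up to sign, the dimension of a nonzero graded component of the canonical module of $M$. That local cohomology interpretation --- and in particular identifying $\reg(M)$ with the top nonvanishing degree of $H^{\mbox{dim}(M)}_{\mathfrak{m}}(M)$ shifted appropriately --- is the step I expect to require the most care, while the argument for (1) is a direct consequence of the resolution-level bookkeeping described above.
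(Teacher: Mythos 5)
Your argument is correct. The paper does not prove this statement at all --- it is quoted directly from \cite[Theorem~4.2]{Syz} --- and your proof of part (1), reading both $HF$ and $HP$ off a minimal free resolution and noting that $\binom{d-a+n}{n}$ has its roots exactly at $d=a-1,\ldots,a-n$ so that $HF(S(-a),d)$ and its Hilbert polynomial agree precisely for $d\ge a-n$, combined with $a_{ij}\le\reg(M)+i\le\reg(M)+\mbox{pd}(M)$, is the standard argument from that source. For part (2) your route is also sound: by Serre's formula $HF(M,d)-HP(M,d)=\sum_i(-1)^i\dim H^i_{\mathfrak m}(M)_d$, only $H^{\dim M}_{\mathfrak m}(M)$ survives when $M$ is Cohen--Macaulay, and the characterization of regularity via local cohomology (recorded in the paper as Theorem~\ref{thm:CohoReg}) places its top nonvanishing degree at $\reg(M)-\dim(M)=\reg(M)+\mbox{pd}(M)-n-1$, so the discrepancy is nonzero there; this single observation disposes of the ``accidental cancellation'' issue you flag and makes it unnecessary to locate an extremal shift $a_{i_0j_0}$ in the last free module of the resolution.
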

Another characterization of regularity is obtained via local cohomology, so we introduce this notion.  See ~\cite[Appendix~1]{Syz} for more details.  Let $Q$ be an ideal of $S$.  The local cohomology modules $H^i_Q(M)$ of $M$ with respect to $Q$ are the right derived functors of the the $Q$-torsion functor $H^0_Q(\underline{\mbox{\hspace{6 pt}}})$, where
\[
H^0_Q(M)=\{x\in M| Q^jx=0\mbox{ for some } j\ge 0\}.
\]
We will only be concerned with the case $Q=m$, where $m=(x_0,\ldots,x_n)$ is the graded maximal ideal of $S$.

\begin{thm}[Theorem~4.3 of~\cite{Syz}]\label{thm:CohoReg}
Let $m\subset S$ be the maximal ideal of $S$ and $M$ a graded $S$-module.  Then
\[
\reg(M)=\max\limits_i(\max\limits_e \{e|H^i_m(M)_e\neq 0\}+i)
\]
\end{thm}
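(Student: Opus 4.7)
The plan is to compute local cohomology directly from the minimal graded free resolution and compare degrees termwise. A direct calculation on the Čech complex for $x_0,\ldots,x_n$ establishes the preliminary fact that $H^q_m(S(-a))=0$ for $q\neq n+1$, while $H^{n+1}_m(S(-a))_e\neq 0$ iff $e\leq a-(n+1)$.

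Let $F_\bullet\to M$ be the minimal graded free resolution with $F_i=\bigoplus_j S(-a_{i,j})$. Applying $H^\bullet_m$ termwise to $F_\bullet$ yields a double complex whose only nonzero row is $q=n+1$, so its hypercohomology spectral sequence degenerates at $E_2$ and identifies $H^{n+1-k}_m(M)$ with the cohomology of $H^{n+1}_m(F_\bullet)$ at position $k$. In particular $H^i_m(M)$ is a subquotient of $H^{n+1}_m(F_{n+1-i})$; if $H^i_m(M)_e\neq 0$ then $e\leq a_{n+1-i,j}-(n+1)$ for some $j$, so setting $k=n+1-i$ gives $e+i\leq a_{k,j}-k\leq \reg(M)$. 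This proves $\max_i(\max_e\{e:H^i_m(M)_e\neq 0\}+i)\leq \reg(M)$.

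For the reverse inequality, set $r=\reg(M)$ and let $k_{\max}$ be the largest integer for which some $a_{k_{\max},j_0}=r+k_{\max}$. I claim $H^{n+1-k_{\max}}_m(M)_{r+k_{\max}-n-1}\neq 0$, which furnishes $e+i=r$ with $i=n+1-k_{\max}$ and $e=r+k_{\max}-n-1$. Examine the complex $H^{n+1}_m(F_\bullet)$ in degree $r+k_{\max}-n-1$. The outgoing target $H^{n+1}_m(F_{k_{\max}-1})$ vanishes in this degree because $a_{k_{\max}-1,j}\leq r+k_{\max}-1$ forces $H^{n+1}_m(S(-a_{k_{\max}-1,j}))_{r+k_{\max}-n-1}=0$. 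The incoming source $H^{n+1}_m(F_{k_{\max}+1})$ could in principle contribute via summands with $a_{k_{\max}+1,j}\in\{r+k_{\max},r+k_{\max}+1\}$; maximality of $k_{\max}$ rules out the second, and for the first the relevant matrix entries lie between summands of equal shift, hence vanish by minimality. Thus the incoming differential is zero in this degree, so the cohomology equals the full $N$-dimensional source $H^{n+1}_m(F_{k_{\max}})_{r+k_{\max}-n-1}$, where $N\geq 1$ counts summands of $F_{k_{\max}}$ with $a_{k_{\max},j}=r+k_{\max}$.

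The main obstacle is the choice of $k$ in the reverse direction: selecting $k_{\max}$ (rather than an arbitrary maximizer of $a_{k,j}-k$) is what lets minimality suppress the incoming differential completely. Without maximality, the source could include shifts $a_{k+1,j}=r+k+1$ and the incoming differential could use degree-one multiplications to hit socle classes, potentially killing cohomology in the critical degree. Once $k_{\max}$ is chosen, the remainder of the argument is essentially dimensional bookkeeping inside $H^{n+1}_m(F_\bullet)$.
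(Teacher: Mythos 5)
The paper itself does not prove this statement---it is imported verbatim as \cite[Theorem~4.3]{Syz}---so there is no internal proof to compare against; I can only assess your argument on its own terms, and it is correct. What you have written is the standard ``\v{C}ech complex against the minimal free resolution'' proof: the two spectral sequences of the double complex $C^\bullet(F_\bullet)$ degenerate (one because $H^q_m$ of a graded free module vanishes for $q\neq n+1$, the other because localization is exact, so $C^q(F_\bullet)$ resolves $C^q(M)$), yielding $H^{n+1-k}_m(M)\cong H_k\bigl(H^{n+1}_m(F_\bullet)\bigr)$; the inequality $\max_i(\max_e\{e: H^i_m(M)_e\neq 0\}+i)\le\reg(M)$ is then termwise bookkeeping with $H^{n+1}_m(S(-a))_e\neq 0\iff e\le a-(n+1)$. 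You also correctly isolate the one genuinely delicate point in the converse: one must take the \emph{largest} homological index $k$ with some $a_{k,j}-k=\reg(M)$, so that in the critical degree $\reg(M)+k-n-1$ the incoming differential can only run between summands of equal twist, where minimality of the resolution forces the matrix entries (being scalars) to vanish; with an arbitrary maximizer the incoming map could indeed hit the socle classes via linear entries and the argument would break. This route differs from the proof in the cited source, which establishes the equivalence by induction on dimension using generic (almost regular) linear forms and the interplay of $\reg$ and $H^\bullet_m$ in the resulting short exact sequences. Your version is more self-contained and makes the role of minimality completely transparent, at the cost of taking the \v{C}ech computation of $H^\bullet_m(S(-a))$ and the degeneration of the hypercohomology spectral sequence as known; the inductive proof in the source adapts better to refinements involving only a single cohomological index. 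Either way, the statement as used in this paper (e.g.\ in Proposition~\ref{prop:regseq} and Proposition~\ref{prop:RegDepth}) is fully supported by your argument.
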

The benefit of this description of regularity is that it interacts well with short exact sequences.  For instance, the following result is a straightforward application of Theorem~\ref{thm:CohoReg}.
\begin{prop}\cite[Corollary~20.19]{Eis}\label{prop:regseq}
Let $0\rightarrow A \rightarrow B \rightarrow C \rightarrow 0$ be a graded exact sequence of finitely generated $S$ modules.  Then
\begin{enumerate}
\item $\reg(A) \le \max \{\reg(B),\reg(C)+1\}$
\item $\reg(B) \le \max \{\reg(A),\reg(C)\}$
\item $\reg(C) \le \max \{\reg(A)-1,\reg(B)\}$
\end{enumerate}
\end{prop}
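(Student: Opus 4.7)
The plan is to deduce all three inequalities from the local cohomology characterization of regularity provided by Theorem~\ref{thm:CohoReg}. Applying the derived functors of the $m$-torsion functor to the short exact sequence $0\to A\to B\to C\to 0$ yields a long exact sequence of graded $S$-modules
\[
\cdots \to H^{i-1}_m(C) \to H^i_m(A) \to H^i_m(B) \to H^i_m(C) \to H^{i+1}_m(A) \to \cdots
\]
in which every map preserves degree. Given this, each inequality reduces to a degree chase combined with the formula $\reg(M)=\max_{i,e}\{e+i \mid H^i_m(M)_e \neq 0\}$.

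For (1), suppose $H^i_m(A)_e \neq 0$. By exactness at $H^i_m(A)$, either the image in $H^i_m(B)_e$ is nonzero, giving $e+i\le \reg(B)$, or the element is in the image of $H^{i-1}_m(C)_e$, giving $e+(i-1)\le \reg(C)$ and hence $e+i\le \reg(C)+1$. Taking the maximum over $i,e$ yields (1). Statement (2) follows from the same chase applied at $H^i_m(B)$ using $H^i_m(A)\to H^i_m(B)\to H^i_m(C)$: any nonvanishing class in $H^i_m(B)_e$ either maps nontrivially to $H^i_m(C)_e$ (so $e+i\le \reg(C)$) or comes from $H^i_m(A)_e$ (so $e+i\le \reg(A)$). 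Statement (3) follows from the chase at $H^i_m(C)$ using $H^i_m(B)\to H^i_m(C)\to H^{i+1}_m(A)$: a nonvanishing class in $H^i_m(C)_e$ either is hit by $H^i_m(B)_e$ (so $e+i\le \reg(B)$) or injects into $H^{i+1}_m(A)_e$ (so $e+(i+1)\le \reg(A)$, i.e.\ $e+i\le \reg(A)-1$).

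Since the argument is a standard diagram chase, there is no real obstacle once Theorem~\ref{thm:CohoReg} is in hand. The only bookkeeping to watch is that the homological index shift of $\pm 1$ in the connecting maps of the long exact sequence is exactly what produces the $\pm 1$ corrections in the three bounds; matching these carefully is the only point requiring attention. This reproduces the proof of \cite[Corollary~20.19]{Eis}.
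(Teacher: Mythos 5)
Your proof is correct and follows exactly the route the paper indicates: the paper gives no proof of its own but cites \cite[Corollary~20.19]{Eis} and remarks that the result is a straightforward application of Theorem~\ref{thm:CohoReg}, which is precisely the long-exact-sequence degree chase you carry out. The bookkeeping of the $\pm 1$ shifts in your three cases is accurate.
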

Proposition~\ref{prop:regseq} can be extended to bound the regularity of a module appearing in an exact sequence of any length by breaking the exact sequence into short exact pieces.  We will use the following corollary to Proposition~\ref{prop:regseq}.
\begin{cor}\label{cor:lesreg}
Let $m\ge 0$ and
\[
0\rightarrow C_m \rightarrow C_{m-1} \rightarrow \ldots \rightarrow C_0 \rightarrow M \rightarrow 0
\]
an exact sequence of $S$-modules.  Then
\[
\reg(M)\le \max_i\{\reg (C_i)-i\}
\]
\end{cor}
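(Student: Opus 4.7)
The plan is to reduce the long exact sequence to a chain of short exact sequences and apply Proposition~\ref{prop:regseq}(3) repeatedly. Let $Z_0 = M$, and for $1 \le i \le m$ let $Z_i$ denote the image of the map $C_i \to C_{i-1}$, which by exactness equals the kernel of $C_{i-1} \to C_{i-2}$ (with the convention $C_{-1} = 0$). Exactness of the long sequence, together with the injectivity of $C_m \hookrightarrow C_{m-1}$, gives $Z_m = C_m$ and short exact sequences
\[
0 \rightarrow Z_{i+1} \rightarrow C_i \rightarrow Z_i \rightarrow 0
\]
for each $0 \le i \le m-1$.

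Next I would apply Proposition~\ref{prop:regseq}(3) to the sequence for $i=0$, obtaining
\[
\reg(M) = \reg(Z_0) \le \max\{\reg(Z_1) - 1,\, \reg(C_0)\}.
\]
Applying the same proposition to the sequence with $i=1$ yields $\reg(Z_1) \le \max\{\reg(Z_2) - 1, \reg(C_1)\}$, so subtracting $1$ from each side gives $\reg(Z_1) - 1 \le \max\{\reg(Z_2) - 2,\, \reg(C_1) - 1\}$. Combining, $\reg(M) \le \max\{\reg(Z_2) - 2,\, \reg(C_1) - 1,\, \reg(C_0)\}$. Iterating this substitution, an easy induction on $i$ shows
\[
\reg(M) \le \max\bigl\{\reg(Z_{i+1}) - (i+1),\, \reg(C_i) - i,\, \ldots,\, \reg(C_0)\bigr\}
\]
for every $0 \le i \le m-1$. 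Taking $i = m-1$ and using $Z_m = C_m$ produces the desired bound
\[
\reg(M) \le \max_{0 \le i \le m}\{\reg(C_i) - i\}.
\]

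There is no real obstacle here; the argument is a routine inductive bookkeeping exercise. The only thing to be careful about is the shift by $1$ that appears in Proposition~\ref{prop:regseq}(3), which is precisely what produces the $-i$ offset in the statement. One could phrase the induction directly on $m$: for $m=0$ we have $M = C_0$ (an isomorphism), so the bound is trivial; for the inductive step, one applies the $m=0$ case to $0 \to Z_1 \to C_0 \to M \to 0$ and the inductive hypothesis to the truncated sequence $0 \to C_m \to \cdots \to C_1 \to Z_1 \to 0$ of length $m-1$, then combines the two estimates.
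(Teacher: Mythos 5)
Your proof is correct and follows exactly the route the paper intends: the paper states this corollary without a written proof, remarking only that Proposition~\ref{prop:regseq} "can be extended... by breaking the exact sequence into short exact pieces," which is precisely your decomposition into the sequences $0 \to Z_{i+1} \to C_i \to Z_i \to 0$ followed by iterated application of part (3). The bookkeeping of the $-i$ shifts is handled correctly.
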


One more concept that is relevant to our situation is that of depth.  The \textit{depth} of a graded $S$-module $M$ with respect to the homogeneous maximal ideal $m$, denoted $\mbox{depth}(M)$, is the length of a maximal sequence $\{f_1,\ldots,f_k\}\subset m$ satisfying that $f_1$ is a non-zerodivisor on $M$ and $f_l$ is a non-zerodivisor on $M/(\sum_{i=1}^{l-1} f_iM)$ for $l=2,\ldots,k$.  Such a sequence is called an $M$-sequence.  We will use the following result of Auslander and Buchsbaum to move back and forth between the notions of depth and projective dimension.
\begin{thm}[Auslander-Buchsbaum]\label{thm:AB}
Let $M$ be an $S=k[x_0,\ldots,x_n]$-module.  Then
\[
\mbox{depth}(M)+\mbox{pd}(M)=n+1.
\]
\end{thm}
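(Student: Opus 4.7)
The plan is to reduce the statement to a computation with the Koszul complex on the variables $x_0,\ldots,x_n$, exploiting its self-duality. I would begin by recording two standard homological characterizations: $\mbox{pd}(M)=\max\{i:\mbox{Tor}_i^S(k,M)\ne 0\}$, where $k=S/m$, which is read off a minimal graded free resolution of $M$ by tensoring with $k$ (minimality ensures the differentials become zero after the tensor); and $\mbox{depth}(M)=\min\{i:\mbox{Ext}^i_S(k,M)\ne 0\}$, the classical Rees-style fact that $M$-regular sequences in $m$ extend precisely until $\mbox{Hom}(k,M/\underline{f}M)$ first becomes nonzero.

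With these in hand, I would compute both Ext and Tor using the Koszul complex $K_\bullet=K_\bullet(x_0,\ldots,x_n)$. Since $x_0,\ldots,x_n$ is a regular sequence on $S$ with quotient $k$, $K_\bullet$ is a minimal free resolution of $k$ of length $n+1$, so $\mbox{Tor}_i^S(k,M)=H_i(K_\bullet\otimes M)$ and $\mbox{Ext}^i_S(k,M)=H^i(\mbox{Hom}(K_\bullet,M))$. The key input is the wedge pairing $\bigwedge^i S^{n+1}\otimes \bigwedge^{n+1-i}S^{n+1}\to \bigwedge^{n+1}S^{n+1}\cong S$, giving $\mbox{Hom}_S(K_i,S)\cong K_{n+1-i}$ and matching the cohomological differential with the homological one up to sign. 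Consequently
\[
H^i(\mbox{Hom}(K_\bullet,M))\cong H_{n+1-i}(K_\bullet\otimes M),
\]
so $\mbox{Ext}^i_S(k,M)\ne 0$ iff $\mbox{Tor}_{n+1-i}^S(k,M)\ne 0$. Combining this with the two characterizations,
\[
\mbox{depth}(M)=\min\{i:\mbox{Tor}_{n+1-i}^S(k,M)\ne 0\}=(n+1)-\max\{j:\mbox{Tor}_j^S(k,M)\ne 0\}=(n+1)-\mbox{pd}(M).
\]

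The delicate point is verifying that the isomorphism $\mbox{Hom}(K_i,S)\cong K_{n+1-i}$ induced by the wedge pairing intertwines the two differentials; this is a classical but careful sign-chase, and would be the main obstacle. An alternative route is induction on $\mbox{pd}(M)$ using the first-syzygy sequence $0\to N\to F_0\to M\to 0$ (so that $\mbox{pd}(N)=\mbox{pd}(M)-1$) and the depth lemma. That route is elementary but requires a supplementary argument, using minimality of the resolution and the fact that $N\subseteq mF_0$, to upgrade the depth lemma estimate $\mbox{depth}(M)\ge \mbox{depth}(N)-1$ to equality; I would prefer the Koszul route because it sidesteps this subtlety.
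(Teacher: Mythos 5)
Your argument is correct. Note first that the paper does not prove this statement at all: it is quoted as the classical Auslander--Buchsbaum formula and used as a black box, so there is no ``paper proof'' to compare against. Your Koszul-duality proof is a complete and standard argument in exactly this setting: since $x_0,\ldots,x_n$ is a regular sequence on $S$, the Koszul complex $K_\bullet$ is a \emph{minimal} free resolution of $k$, so it computes both $\mathrm{Tor}^S_\bullet(k,M)$ and $\mathrm{Ext}^\bullet_S(k,M)$; the two characterizations you invoke ($\mathrm{pd}$ as the top nonvanishing Tor, via minimality of the resolution of $M$, and depth as the least nonvanishing Ext, via Rees's theorem) are both valid for finitely generated graded modules, which is the implicit standing hypothesis in the paper; and the self-duality $\mathrm{Hom}_S(K_i,S)\cong K_{n+1-i}$ induced by the wedge pairing does intertwine the differentials up to sign, yielding $\mathrm{Ext}^i_S(k,M)\cong \mathrm{Tor}^S_{n+1-i}(k,M)$ and hence the formula. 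The one thing worth making explicit is that this route leans essentially on the regularity of $S$ (so that $k$ has a finite free resolution of length exactly $n+1$); it therefore proves the theorem precisely in the form the paper needs, but not the general local Auslander--Buchsbaum statement, for which the inductive syzygy-plus-depth-lemma argument you sketch as an alternative is the usual route. For the purpose at hand the Koszul proof is the cleaner choice, and the sign verification you flag, while tedious, is routine.
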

Observe that, according to this formula, $\mbox{pd}(M)\le n+1$.  This inequality is known as the Hilbert syzygy theorem.

The following proposition is one of the ingredients used in the proof of the Gruson-Lazarsfeld-Peskine theorem on bounding the regularity of curves in projective space~\cite[Proposition~5.5]{Syz}.  It is the main tool we will use for bounding regularity of spline modules.

\begin{prop}\label{prop:RegDepth}
Let $M$ be an $S$-module and $N\subset M$ a submodule of $M$ with $\mbox{dim}(M/N)<\mbox{depth}(M)$, or equivalently $\mbox{codim}(M/N)>\mbox{pd}(M)$.  Then $\reg(M) \le \reg(N)$.
\end{prop}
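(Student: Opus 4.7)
The plan is to use the local cohomology characterization of regularity from Theorem~\ref{thm:CohoReg} applied to the short exact sequence
\[
0\rightarrow N\rightarrow M\rightarrow M/N\rightarrow 0.
\]
If I can show that for every $i$ the degrees in which $H^i_m(M)$ is nonzero are bounded above by $\reg(N)-i$, then Theorem~\ref{thm:CohoReg} yields $\reg(M)\le \reg(N)$ immediately. Note that the equivalence of the two hypotheses stated in the proposition is just the Auslander--Buchsbaum formula (Theorem~\ref{thm:AB}) applied to $M/N$ together with the fact that $\mbox{pd}(M)=n+1-\mbox{depth}(M)$.

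The first step is to record two standard vanishing theorems for local cohomology with respect to the graded maximal ideal $m$. By the cohomological characterization of depth, $H^i_m(M)=0$ for $i<\mbox{depth}(M)$; and by Grothendieck vanishing, $H^i_m(M/N)=0$ for $i>\mbox{dim}(M/N)$. The hypothesis $\mbox{dim}(M/N)<\mbox{depth}(M)$ is precisely what makes these two vanishing ranges overlap, and this overlap is the engine of the proof.

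The next step is to apply the long exact sequence in local cohomology,
\[
\cdots \rightarrow H^{i-1}_m(M/N)\rightarrow H^i_m(N)\rightarrow H^i_m(M)\rightarrow H^i_m(M/N)\rightarrow \cdots,
\]
and analyze it by cases. For $i<\mbox{depth}(M)$, the middle term $H^i_m(M)$ itself vanishes, so it contributes nothing to $\reg(M)$. For $i\ge \mbox{depth}(M)$ we have $i>\mbox{dim}(M/N)$, so $H^i_m(M/N)=0$ and the map $H^i_m(N)\rightarrow H^i_m(M)$ is surjective; consequently
\[
\max\{e\mid H^i_m(M)_e\neq 0\}+i\le \max\{e\mid H^i_m(N)_e\neq 0\}+i\le \reg(N).
\]
Taking the maximum over $i$ and invoking Theorem~\ref{thm:CohoReg} gives $\reg(M)\le\reg(N)$.

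The only real subtlety, and the point I would expect to scrutinize most carefully, is the boundary index $i=\mbox{depth}(M)=\mbox{dim}(M/N)+1$. At this index the preceding term $H^{i-1}_m(M/N)=H^{\mbox{dim}(M/N)}_m(M/N)$ need not vanish, so $H^i_m(N)\to H^i_m(M)$ is not in general an isomorphism. Fortunately it is still a surjection, since $H^i_m(M/N)=0$ at that $i$, which is all the argument requires. For $i>\mbox{depth}(M)$ both neighboring terms vanish and one gets an isomorphism $H^i_m(N)\cong H^i_m(M)$, making the comparison of regularities immediate.
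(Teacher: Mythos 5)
Your proof is correct and follows essentially the same route as the paper: the same short exact sequence, the same two vanishing statements (depth-sensitivity and Grothendieck vanishing for $H^i_m$), and the same surjectivity/isomorphism analysis of the long exact sequence, concluded via Theorem~\ref{thm:CohoReg}. Your extra care at the boundary index is exactly the point the paper's proof relies on (surjectivity of $H^{d}_m(N)\rightarrow H^{d}_m(M)$ at $d=\mbox{depth}(M)$), so nothing is missing.
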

\begin{proof}
We prove $\reg(M) \le \reg (N)$ if $\mbox{dim}(M/N)<\mbox{depth}(M)$.  The equivalence of the statements $\mbox{dim}(M/N)<\mbox{depth}(M)$ and $\mbox{codim}(M/N)>\mbox{pd}(M)$ follows directly from Theorem~\ref{thm:AB}.  
Set $d=\mbox{depth}(M)$.  By~\cite[Proposition~A1.16]{Syz}, $H^i_m(M)=0$ for $i<d$ and $H^i_m(M/N)=0$ for $i>\mbox{dim}(M/N)$.  The long exact sequence in local cohomology resulting from the short exact sequence
\[
0\rightarrow N \rightarrow M \rightarrow M/N \rightarrow 0
\]
yields a surjection $H^d_m(N) \twoheadrightarrow H^d_m(M)$ and isomorphisms $H^i_m(N)\cong H^i_m(M)$ for $i>d$.  Since $H^i_m(M)=0$ for $i<d$, Theorem~\ref{thm:CohoReg} yields $\reg(N)\ge \reg(M)$.
\end{proof}

\subsection{High degree generators for splines}

We give a construction motivating the regularity bounds we derive in Corollary~\ref{cor:freegens}, Theorem~\ref{thm:main2}, and Theorem~\ref{thm:main3}.  These results suggest that in general regularity bounds for $C^\alpha(\PC)$ might be obtained by taking the maximal sum of smoothness parameters $\alpha(\tau)+1$ appearing in certain subcomplexes of $\PC$.  In the following example, starting with a polytope $\sigma\subset\R^n$, we construct a polytopal complex $\PC$ so that $\sigma\in\PC_n$ and $C^\alpha(\wPC)$ ($C^\alpha(\PC)$ if $\PC$ is central) has a minimal generator supported the facet $\wsigma$ ($\sigma$ if $\PC$ is central).  Such generators have degree $\sum_{\tau\in\sigma_{n-1}} \alpha(\tau)+1$.  Since $\reg(C^\alpha(\PC))$ in particular bounds the degrees of generators of $C^\alpha(\PC)$ (see Remark~\ref{rm:degrees}), this construction indicates that a bound on $\reg(C^\alpha(\PC))$ will need to be at least as large as the maximal sum of smoothness parameters over codimension one faces occurring in any facet of $\PC$ (or at least boundary facets - see Conjecture~\ref{conj:c1}).  This example generalizes the construction in~\cite[Theorem~5.7]{LS}.

For simplicity we restrict the construction to the case of uniform smoothness without imposing boundary vanishing.  The generalization to arbitrary smoothness parameters should be clear.

\begin{exm}\label{ex:HighGen}
Suppose that $A\subset\R^n$ is a polytope with a codimension one face $\tau\in A_{n-1}$ so that $\partial A\setminus\tau$ is the graph of a piecewise linear function over $\tau$.  Remark~\ref{rm:polygraph} below shows that this can be accomplished for any polytope by a projective change of coordinates.

For instance this is true if $A$ is the join of $\tau$ with the origin $\mathbf{0}\in\R^n$.  Let $l_\tau$ be a choice of affine form vanishing on $\tau$ and let $x_1,\ldots,x_n$ be coordinates on $\R^n$.  We further assume that
\begin{enumerate}
\item $\tau$ is parallel to the coordinate hyperplane $x_n=0$
\item $A$ lies between the hyperplanes $x_n=0$ and $l_\tau=0$.
\item For any two codimension one faces $\gamma_1,\gamma_2\in A_{n-1}\setminus\tau$ , $\mbox{aff}{\gamma_1}$ and $\mbox{aff}(\gamma_2)$ intersect the coordinate hyperplane $x_n=0$ in distinct linear subspaces of codimension $2$.
\end{enumerate}
(1) can be obtained by rotating the original polytope, (2) and (3) can be obtained by translation.  If $A$ is the join of $\tau$ with the origin, (3) may be obtained by slight perturbations of the non-zero vertices of $A$ (within the plane $l_\tau=0$).

Let $B$ be the reflection of $A$ across the hyperplane $x_n=0$.  For a face $\gamma\in A$, let $\bar{\gamma}$ denote the corresponding face of $B$ obtained by reflection.  For $\gamma\in A_{n-1}\setminus\tau$, let $\sigma(\gamma)$ denote the polytope formed by taking the convex hull of $\gamma$ and $\bar{\gamma}$.  Now define $\PC(A)$ as the polytopal complex with facets $A,B$ and $\{\sigma(\gamma)|\gamma\neq\tau\in A_{n-1}\}$.  See Figure~\ref{fig:highgens} for examples of this construction in $\R^2$ and $\R^3$.
\begin{figure}[htp]
\centering
\begin{subfigure}[b]{.45\linewidth}
\includegraphics[scale=.5]{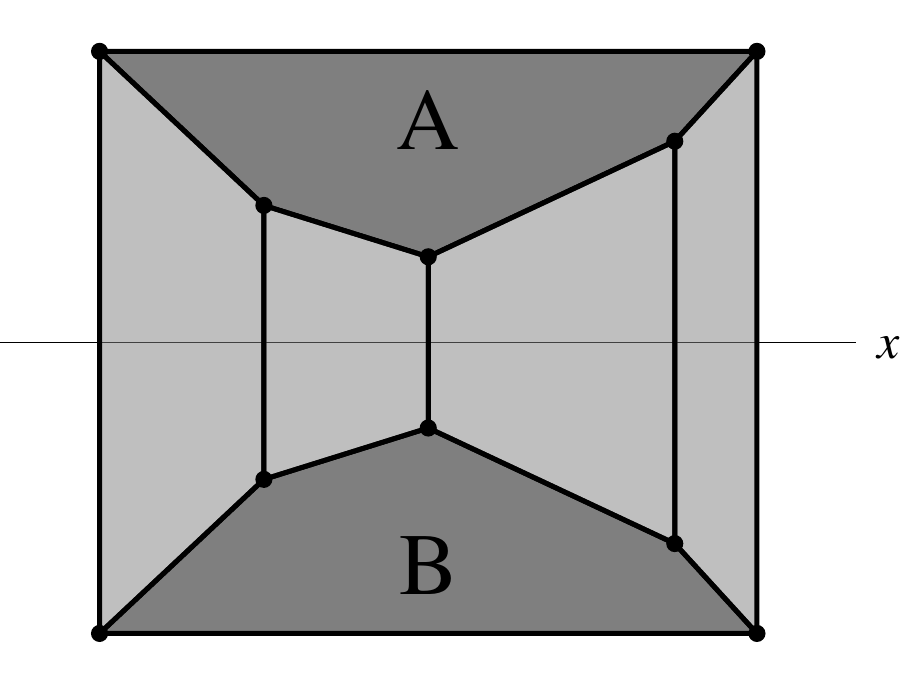}
\caption{$\PC(A)$ for $A\subset\R^2$}\label{fig:HGNC}
\end{subfigure}
\begin{subfigure}[b]{.45\linewidth}
\includegraphics[scale=.5]{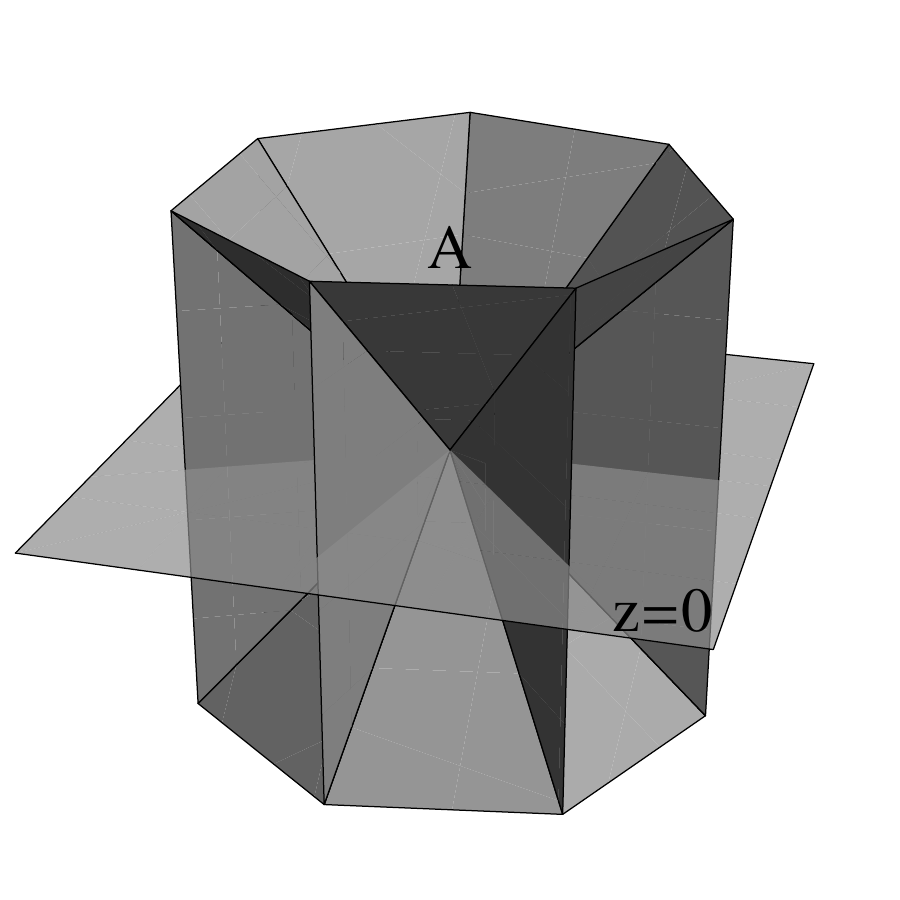}
\caption{$\PC(A)$ for central $A\subset\R^3$}\label{fig:HGC}
\end{subfigure}
\caption{}\label{fig:highgens}
\end{figure}
Take the cone $\widehat{\PC(A)}\subset\R^n$ over $\PC(A)$ and consider the graded $S=\R[x_0,\ldots,x_n]$-module $C^r(\widehat{\PC(A)})$.  Let $\phi_B:C^r(\widehat{\PC(A)})\rightarrow S$ be the $S$-linear map obtained by restricting splines $F\in C^r(\widehat{\PC(A)})$ to the facet $\widehat{B}$.  This is a splitting of the inclusion $S\rightarrow C^r(\widehat{\PC(A)})$ as global polynomials on $\widehat{\PC(A)}$.  Let $NT^r(\widehat{\PC(A)})$ be the kernel of $\phi_B$.  Then
\[
C^r(\widehat{\PC(A)})\cong S \oplus NT^r(\widehat{\PC(A)}).
\]
Let $S'=\R[x_0,\ldots,x_{n-1}]$ and, for $f\in S$, set $\overline{f}=f(x_0,\ldots,x_{n-1},0)$.  Define an $S$-linear map $\phi:C^r(\widehat{\PC(A)})\cong S\oplus NT^r(\widehat{\PC(A)}) \rightarrow S'$ by
\[
(f,F)\rightarrow \overline{F_{\widehat{A}}},
\]
where $f\in S$, $F\in NT^r(\widehat{\PC(A)})$, and $F_{\widehat{A}}$ is the restriction of $F$ to the facet $\widehat{A}$.
Set $\Lambda(A)=\prod\limits_{\gamma\neq\tau\in A_{n-1}} L^{r+1}_\gamma$, where $L_\gamma=l_{\wgamma}$ is a choice of homogeneous form vanishing on $\wgamma$.  We claim that the image of $\phi$ is the principal ideal
\[
I=\langle \overline{\Lambda(A)}\rangle.
\]
$\phi$ is surjective since the spline $G(A)$, defined by
\[
G(A)_\sigma=\left\lbrace
\begin{array}{ll}
0 & \sigma\neq A\\
\Lambda(A) & \sigma=A,
\end{array}
\right.
\]
goes to the generator of $I$ under $\phi$.  To see that $\mbox{im}(\phi)\subset I$, let $F\in NT^r(\widehat{\PC(A)})$.  Then, since $F_{\widehat{B}}=0$, $L^{r+1}_{\bar{\gamma}}|F_{\sigma(\gamma)}$ for every $\bar{\gamma}\neq\bar{\tau}\in B_{n-1}$.  We also have $L^{r+1}_\gamma|(F_{\widehat{A}}-F_{\widehat{\sigma(\gamma)}})$ for every $\gamma\neq\tau\in A_{n-1}$.  Hence $F_{\widehat{A}}\in \cap_{\gamma\neq\tau\in A_{n-1}} \langle L^{r+1}_\gamma,L^{r+1}_{\bar{\gamma}}\rangle$.  But $L_\gamma$ and $L_{\bar{\gamma}}$ differ at most by a scalar multiple and a sign on the variable $x_n$, so $\overline{L_\gamma}=\overline{L_{\bar{\gamma}}}$ and
\[
\phi(F)\in\bigcap\limits_{\gamma\neq\tau\in A_{n-1}} \langle \overline{L^{r+1}_\gamma} \rangle=\langle \prod\limits_{\gamma\neq\tau\in A_{n-1}} \overline{L^{r+1}_\gamma} \rangle=\langle \overline{\Lambda(A)}\rangle
\]
as claimed.  Property (3) above is used in the first equality - this guarantees all the forms $\overline{L_\gamma}$ are distinct.  It follows that the spline $G(A)$, which is supported only on the facet $\widehat{A}$ and generates splines supported on $\widehat{A}$, is a minimal generator of $C^r(\widehat{\PC(A)})$.

If $A$ is the join of $\tau$ with $\mathbf{0}$, then $\PC(A)$ is central and $C^r(\PC(A))$ is graded over the polynomial ring $R=\R[x_1,\ldots,x_n]$.  In this case it is unnecessary to take the cone over $\PC(A)$ above.
\end{exm}

\begin{remark}\label{rm:polygraph}
Given a convex polytope $A\subset\R^n\subset\mathbb{P}^n_\R$ and a choice $\tau$ of codimension one face, there is a projective change of coordinates which makes $\partial A \setminus \tau$ into the graph of a piecewise linear function over $\tau$.  If $A$ is the join of $\tau$ with the origin $\mathbf{0}\in\R^n$, then this is easily done by a linear transformation.  Otherwise, this can be accomplished by choosing a hyperplane $H\subset\R^n$ which is parallel to $\tau$ and very close to $P$ without intersecting $P$.  Then make a projective change of coordinates which sends $H$ to the hyperplane at infinity (this argument is due to Sergei Ivanov).  As long as $H$ is chosen close enough to $\tau$, this has the effect of making the face $\tau$ huge and the rest of the polytope the graph of a piecewise linear function over $\tau$ (once we restrict to affine coordinates again).  Hence, given any polytope $A\subset\R^n$ and a choice of codimension one face $\tau\in A_{n-1}$, the construction in Example~\ref{ex:HighGen} allows us to build a polytopal complex $\PC(A)$ so that $\partial^0 A=\partial A\setminus\tau$ and the generator of $C^r_{\widehat{A}}(\widehat{\PC(A)})$ is a minimal generator of $C^r(\widehat{\PC(A)})$.
\end{remark}

\begin{remark}
The construction in Example~\ref{ex:HighGen} is inherently nonsimplicial.  Some other construction needs to be used to obtain high degree generators in the simplicial case.  In the planar simplicial case, there is an example in~\cite{2r} of a planar simplicial complex $\Delta$ with minimal generator in degree $2r+2$.
\end{remark}

\section{Bounding Regularity for Low Projective Dimension}\label{sec:LowPD}
In this section we combine the observations so far to bound the regularity of the spline algebra $C^\alpha(\PC)$, where $\PC\subset\R^{n+1}$ is a central, pure, hereditary, $(n+1)$-dimensional polytopal complex.  Recall a central complex is one in which the intersection of all interior codimension one faces is nonempty.  We assume this intersection contains the origin and that $\alpha(\tau)=-1$ for every codimension one face $\tau\in\PC_n$ so that $\mathbf{0}\notin\mbox{aff}(\tau)$; this makes the ring $C^\alpha(\PC)$ a graded $S=\R[x_0,\ldots,x_n]$-algebra with respect to the standard grading on $S$.  The following corollary is critical to our analysis.

\begin{cor}\cite[Proposition~3.4]{Modules}\label{cor:SplinePD}
If $\PC$ is a central, pure, hereditary, $(n+1)$-dimensional polytopal complex, then 
\begin{enumerate}
\item $\mbox{pd}(C^\alpha(\PC))\le n-1$
\item $\wp(C^\alpha(\PC))\le \reg(C^\alpha(\PC))-2$.
\end{enumerate}
\end{cor}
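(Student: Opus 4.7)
Statement (2) is a direct corollary of (1). Given $\mbox{pd}(C^\alpha(\PC)) \le n-1$, I would substitute into Theorem~\ref{thm:HFHP}(1) applied to the graded module $C^\alpha(\PC)$ over $S=\R[x_0,\ldots,x_n]$ to obtain
\[
\wp(C^\alpha(\PC)) \le \reg(C^\alpha(\PC)) + \mbox{pd}(C^\alpha(\PC)) - n - 1 \le \reg(C^\alpha(\PC)) - 2.
\]
So the real work lies in establishing (1).

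For (1), the plan is to invoke the Auslander-Buchsbaum formula (Theorem~\ref{thm:AB}) and reduce to showing $\mbox{depth}(C^\alpha(\PC)) \ge 2$. I would extract this depth from the four-term exact sequence of Lemma~\ref{lem:seq1} by splitting it at $K := \mbox{im}(\phi)$ into two short exact sequences
\[
0 \to C^\alpha(\PC) \to \mathcal{F} \to K \to 0 \qquad \textup{and} \qquad 0 \to K \to S^{f_n} \to C \to 0,
\]
where $\mathcal{F} = S^{f_{n+1}} \oplus \bigoplus_{\tau\in\PC_n} S(-\alpha(\tau)-1)$ is free of depth $n+1$. Applying the standard depth lemma to the second sequence gives
\[
\mbox{depth}(K) \ge \min(\mbox{depth}(S^{f_n}),\, \mbox{depth}(C)+1) \ge 1,
\]
using only the automatic bound $\mbox{depth}(C) \ge 0$. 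Feeding this into the first sequence yields
\[
\mbox{depth}(C^\alpha(\PC)) \ge \min(\mbox{depth}(\mathcal{F}),\, \mbox{depth}(K)+1) \ge \min(n+1,\, 2) = 2
\]
provided $n \ge 1$, and Auslander-Buchsbaum then closes out $\mbox{pd}(C^\alpha(\PC)) \le (n+1)-2 = n-1$.

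The argument is essentially mechanical once Lemma~\ref{lem:seq1} is in hand. The main point worth highlighting is that \emph{no} refined information about the cokernel $C$ is required — the trivial bound $\mbox{depth}(C) \ge 0$ suffices because the free middle term $\mathcal{F}$ has maximal depth and can absorb the two single-step depth losses introduced by the two short exact sequences. The degenerate case $n=0$ (where $\PC \subset \R^1$) falls outside the intended scope and can be excluded or handled separately; for $n \ge 1$, which covers all cases of interest in the paper, both conclusions follow.
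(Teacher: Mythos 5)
Your proposal is correct and in substance identical to the paper's proof: both parts rest on Lemma~\ref{lem:seq1} exhibiting $C^\alpha(\PC)$ as the kernel of a map of free $S$-modules (a second syzygy), and part (2) is the same substitution into Theorem~\ref{thm:HFHP}. The only difference is that the paper cites the resulting bound $\mbox{pd}(C^\alpha(\PC))\le n-1$ as a direct consequence of the Hilbert syzygy theorem for second syzygy modules, whereas you unpack that standard fact via the depth lemma and Auslander--Buchsbaum --- a correct and slightly more self-contained rendering of the same argument, including the appropriate caveat about the degenerate case $n=0$.
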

\begin{proof}
(1) follows from Lemma~\ref{lem:seq1}.  $C^\alpha(\PC)$ is the kernel of a map between free $S$-modules, so it is a second syzygy module.  By the Hilbert syzygy theorem, any $S$-module has projective dimension at most $n+1$.  Since $C^\alpha(\PC)$ is a second syzygy module, $\mbox{pd}(C^\alpha(\PC))\le n-1$.  (2) follows from (1) and Theorem~\ref{thm:HFHP}.
\end{proof}

\begin{thm}\label{thm:main0}
Let $\PC\subset\R^{n+1}$ be a pure $(n+1)$-dimensional hereditary polytopal complex which is central.  Then
\[
\reg (C^\alpha(\PC)) \le \reg (LS^{\alpha,n-1}(\PC))
\]
More generally, if $\mbox{pd}(C^\alpha(\PC))\le k$, then
\[
\reg (C^\alpha(\PC)) \le \reg (LS^{\alpha,k}(\PC))
\]
\end{thm}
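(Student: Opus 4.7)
The plan is to apply Proposition~\ref{prop:RegDepth} directly, taking $M = C^\alpha(\PC)$ and $N = LS^{\alpha,k}(\PC)$, viewed as a graded submodule of $M$. The only thing we need to verify is the inequality $\mbox{codim}(M/N) > \mbox{pd}(M)$; once this is in hand, Proposition~\ref{prop:RegDepth} gives the regularity bound for free.

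First, I would invoke Theorem~\ref{thm:latticegens}, which places $LS^{\alpha,k}(\PC)$ and $C^\alpha(\PC)$ into a short exact sequence
\[
0 \rightarrow LS^{\alpha,k}(\PC) \rightarrow C^\alpha(\PC) \rightarrow C \rightarrow 0,
\]
and, crucially, records that $\mbox{codim}(C) \ge k+1$. Under the hypothesis $\mbox{pd}(C^\alpha(\PC)) \le k$ of the second statement, we then obtain
\[
\mbox{codim}(C^\alpha(\PC)/LS^{\alpha,k}(\PC)) \ge k+1 > k \ge \mbox{pd}(C^\alpha(\PC)),
\]
so the codimension hypothesis of Proposition~\ref{prop:RegDepth} is satisfied. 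Applying that proposition yields $\reg(C^\alpha(\PC)) \le \reg(LS^{\alpha,k}(\PC))$, which proves the second (and stronger) statement.

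For the first statement, I would simply combine the just-proved second statement with Corollary~\ref{cor:SplinePD}(1), which asserts that $\mbox{pd}(C^\alpha(\PC)) \le n-1$ for any central, pure, hereditary $(n+1)$-dimensional complex $\PC \subset \R^{n+1}$. Taking $k = n-1$ in the general statement then gives $\reg(C^\alpha(\PC)) \le \reg(LS^{\alpha,n-1}(\PC))$, as claimed.

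There is no real obstacle in the argument: the proof is essentially a bookkeeping assembly of Theorem~\ref{thm:latticegens} (which supplies the codimension of the cokernel), Proposition~\ref{prop:RegDepth} (which converts the codimension bound into a regularity inequality), and Corollary~\ref{cor:SplinePD} (which supplies the ceiling on projective dimension in the central case). The only place where one should be slightly careful is to note that both statements of Proposition~\ref{prop:RegDepth} — the codimension form and the depth/dimension form — are equivalent via the Auslander--Buchsbaum formula (Theorem~\ref{thm:AB}), so either formulation can be used without further work.
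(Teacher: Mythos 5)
Your proposal is correct and follows exactly the paper's own argument: Theorem~\ref{thm:latticegens} gives $\mbox{codim}(C)\ge k+1>k\ge\mbox{pd}(C^\alpha(\PC))$, Proposition~\ref{prop:RegDepth} then yields the regularity inequality, and Corollary~\ref{cor:SplinePD} supplies $\mbox{pd}(C^\alpha(\PC))\le n-1$ for the first statement. No gaps.
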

\begin{proof}
The first statement follows from the second by Corollary~\ref{cor:SplinePD}.  To prove the second statement, note that by Theorem~\ref{thm:latticegens}, the cokernel of the inclusion $LS^{\alpha,k}(\PC)$ has codimension at least $k+1$.  By Proposition~\ref{prop:RegDepth}, $\reg (C^\alpha(\PC)) \le \reg (LS^{\alpha,k}(\PC))$.
\end{proof}

To simplify the statements of later results, we introduce some additional notation.  Given a pure $(n+1)$-dimensional subcomplex $\QC\subset\PC$, let $\partial(\QC)$ denote the set of $n$ dimensional boundary faces of $\QC$.  Define
\[
\Lambda(\QC)=\prod\limits_{\substack{\gamma\in (\partial(\QC))_n}} l^{\alpha(\gamma)+1}_{\gamma}
\]
and set
\[
\lambda(\QC)=\mbox{deg}(\Lambda(\QC))=\sum_{\gamma\in(\partial(\QC))_n}(\alpha(\gamma)+1).
\]
As a first application of Theorem~\ref{thm:main0}, we give a bound on the degree of generators of $C^\alpha(\PC)$ when $C^\alpha(\PC)$ is free.

\begin{cor}\label{cor:freegens}
Suppose $C^\alpha(\PC)$ is free and set $f(\PC)=\max\{\lambda(\sigma)|\sigma\in\PC_{n+1}\}$.  Then $C^\alpha(\PC)$ is generated in degrees at most $f(\PC)$.
\end{cor}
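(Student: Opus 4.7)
The plan is to combine Theorem~\ref{thm:main0} with an explicit description of $LS^{\alpha,0}(\PC)$. Since $C^\alpha(\PC)$ is free, $\mbox{pd}(C^\alpha(\PC))=0$, so the second assertion of Theorem~\ref{thm:main0} with $k=0$ yields $\reg(C^\alpha(\PC))\le\reg(LS^{\alpha,0}(\PC))$. By Remark~\ref{rm:degrees}, the generators of the free module $C^\alpha(\PC)$ sit in degrees at most $\reg(C^\alpha(\PC))$, so it suffices to show $\reg(LS^{\alpha,0}(\PC))\le f(\PC)$.

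By Corollary~\ref{cor:LowkDescription} we have $LS^{\alpha,0}(\PC)=\sum_{\sigma\in\PC_{n+1}} C^\alpha_\sigma(\PC)$. Since a spline supported on two distinct facets vanishes identically, this sum is in fact direct. The crux of the argument is to identify each summand: a spline $F\in C^\alpha_\sigma(\PC)$ is determined by $F_\sigma\in S$, and the defining conditions of $C^\alpha(\PC)$ force $l_\gamma^{\alpha(\gamma)+1}\mid F_\sigma$ for every codim-one face $\gamma$ of $\sigma$. For interior $\gamma$ this comes from the difference condition $l_\gamma^{\alpha(\gamma)+1}\mid F_\sigma-F_{\sigma'}=F_\sigma$ against the other facet on $\gamma$; for boundary $\gamma$ it is the standing boundary vanishing condition. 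Because $\sigma$ is a convex polytope, its codim-one faces lie on pairwise distinct hyperplanes through $\mathbf{0}$, so the forms $\{l_\gamma\}_{\gamma\subset\sigma}$ are pairwise non-proportional linear forms in $S$, hence pairwise coprime. Consequently
\[
\bigcap_{\gamma\subset\sigma}\langle l_\gamma^{\alpha(\gamma)+1}\rangle=\langle\Lambda(\sigma)\rangle,
\]
and since $\Lambda(\sigma)$, extended by zero off $\sigma$, is itself an element of $C^\alpha_\sigma(\PC)$, we obtain a graded isomorphism $C^\alpha_\sigma(\PC)\cong S(-\lambda(\sigma))$.

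Assembling these pieces, $LS^{\alpha,0}(\PC)\cong\bigoplus_{\sigma\in\PC_{n+1}} S(-\lambda(\sigma))$ is free with regularity $\max_\sigma\lambda(\sigma)=f(\PC)$, which combined with the first paragraph gives the bound $\reg(C^\alpha(\PC))\le f(\PC)$ and hence the claim on generator degrees. The only real obstacle is the coprimality observation about the hyperplane forms $l_\gamma$, which reduces to the elementary convex-geometric fact that distinct codim-one faces of a convex polytope lie on distinct affine hyperplanes; every other step is a formal application of Theorem~\ref{thm:main0}, Corollary~\ref{cor:LowkDescription}, and Remark~\ref{rm:degrees}.
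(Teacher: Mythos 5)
Your proof is correct and follows essentially the same route as the paper: reduce to $\reg(LS^{\alpha,0}(\PC))$ via Theorem~\ref{thm:main0} in the projective-dimension-zero case, decompose $LS^{\alpha,0}(\PC)$ as a direct sum over facets using Corollary~\ref{cor:LowkDescription}, and identify each summand $C^\alpha_\sigma(\PC)$ with $S(-\lambda(\sigma))$. The only difference is that you spell out the coprimality argument behind $C^\alpha_\sigma(\PC)\cong S(-\lambda(\sigma))$, which the paper simply asserts.
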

\begin{proof}
For a free module, regularity is the maximum degree of generators (this follows from Definition~\ref{defn:regdef}), so we need to show $\reg (C^\alpha(\PC))\le f(\PC)$.
$C^\alpha(\PC)$ is free iff $\mbox{pd}(C^\alpha(\PC))=0$.  By Theorem~\ref{thm:main0},
\[
\reg (C^\alpha(\PC)) \le \reg (LS^{\alpha,0}(\PC)).
\]
By Corollary~\ref{cor:LowkDescription}, $LS^{\alpha,0}=\sum_{\sigma\in\PC_{n+1}} C^\alpha_{\sigma}(\PC)$.  Since the support of each summand is disjoint, this is a direct sum, so $\reg (LS^{\alpha,0}(\PC))=\max\{\reg ( C^\alpha_{\sigma}(\PC))|\sigma\in\PC_{n+1}\}$.  Also, $C^\alpha_{\sigma}(\PC)$ consists of splines $F$ supported on the single facet $\sigma$.  Such splines are characterized by $F|_{\sigma}$ being a polynomial multiple of $\Lambda(\sigma)$.  It follows that $C^\alpha_{\sigma}(\PC)\cong S(-\lambda(\sigma))$.  Hence
\[
\reg (LS^{\alpha,0}(\PC))=\max\{\lambda(\sigma)|\sigma\in\PC_{n+1}\}=f(\PC).
\]
\end{proof}

We now apply Theorem~\ref{thm:main0} to the case where $C^\alpha(\PC)$ has projective dimension at most one.  In particular, this includes central complexes in $\R^3$ by Corollary~\ref{cor:SplinePD}.

\begin{thm}\label{thm:main1}
Suppose $\mbox{pd}(C^\alpha(\PC))\le 1$.  Let $f(\PC)=\max\{\lambda(\sigma)|\sigma\in\PC_{n+1}\}$ and $T=\max\limits_{\tau\in\PC^0_n}\{\reg (C^\alpha_{\tau}(\PC))\}$.  Then $\reg (C^\alpha(\PC))\le \max\{f(\PC)-1,T\}$.
\end{thm}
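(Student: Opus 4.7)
The plan is to combine Theorem~\ref{thm:main0} with the Cech-type resolution of $LS^{\alpha,1}(\PC)$ assembled from Propositions~\ref{prop:fakeres} and~\ref{prop:M1Res}, then apply the standard regularity estimate from Corollary~\ref{cor:lesreg}.

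First, since $\mbox{pd}(C^\alpha(\PC))\le 1$ by hypothesis, Theorem~\ref{thm:main0} reduces the problem to showing $\reg(LS^{\alpha,1}(\PC))\le\max\{f(\PC)-1,T\}$. So everything is about the lattice-supported submodule $LS^{\alpha,1}(\PC)=\sum_{\tau\in\PC^0_n}C^\alpha_\tau(\PC)$.

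Next, I would feed the augmented exact complex $C_\bullet(\M_1)\to LS^{\alpha,1}(\PC)\to 0$ from Proposition~\ref{prop:fakeres} into Corollary~\ref{cor:lesreg}. Reindexing so that $C_1(\M_1)$ sits in homological degree $0$, this yields
\[
\reg(LS^{\alpha,1}(\PC))\le \max_{k\ge 1}\bigl\{\reg(C_k(\M_1))-(k-1)\bigr\}.
\]
This is the heart of the argument and the step where the combinatorial structure of the complex enters.

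Finally, I would read off the regularity of each $C_k(\M_1)$ using Proposition~\ref{prop:M1Res}. For $k=1$, $C_1(\M_1)=\bigoplus_{\tau\in\PC^0_n}C^\alpha_\tau(\PC)$, so $\reg(C_1(\M_1))=T$ (regularity of a direct sum is the max of the regularities of its summands). For $k\ge 2$, every summand of $C_k(\M_1)$ is isomorphic to $C^\alpha_\sigma(\PC)$ for some facet $\sigma\in\PC_{n+1}$; since $C^\alpha_\sigma(\PC)\cong S(-\lambda(\sigma))$ is free of rank one, its regularity equals $\lambda(\sigma)\le f(\PC)$, hence $\reg(C_k(\M_1))\le f(\PC)$. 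After applying the shift $-(k-1)$: the $k=1$ term contributes $T$, the $k=2$ term contributes $f(\PC)-1$, and every $k\ge 3$ contributes at most $f(\PC)-2<f(\PC)-1$. Taking the maximum gives $\reg(LS^{\alpha,1}(\PC))\le\max\{T,f(\PC)-1\}$, which combined with Theorem~\ref{thm:main0} is the claim.

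The argument is largely mechanical once the infrastructure is in place; the only place needing care is the re-indexing when invoking Corollary~\ref{cor:lesreg}, since $C_\bullet(\M_1)$ starts in degree $1$ rather than degree $0$. I do not anticipate a genuine obstacle, since Propositions~\ref{prop:fakeres} and~\ref{prop:M1Res} provide exactly the shape of resolution one wants and the freeness of $C^\alpha_\sigma(\PC)$ makes the regularity computation for $k\ge 2$ immediate.
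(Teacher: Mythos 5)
Your proposal is correct and follows essentially the same route as the paper's proof: reduce to $LS^{\alpha,1}(\PC)$ via Theorem~\ref{thm:main0}, resolve it by $C_\bullet(\M_1)$ using Propositions~\ref{prop:fakeres} and~\ref{prop:M1Res}, identify $C^\alpha_\sigma(\PC)\cong S(-\lambda(\sigma))$, and apply Corollary~\ref{cor:lesreg}. Your explicit handling of the re-indexing (the complex starting in degree $1$) is the only detail the paper leaves implicit, and you have it right.
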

\begin{proof}
By Corollary~\ref{thm:main0},
\[
\reg (C^\alpha(\PC)) \le \reg (LS^{\alpha,1}(\PC)).
\]
By Proposition~\ref{prop:fakeres}, $LS^{\alpha,1}(\PC)$ fits into the exact sequence
\[
C_\bullet(\M_1)\rightarrow LS^{\alpha,1}(\PC) \rightarrow 0.
\]
From Proposition~\ref{prop:M1Res},
\[
C_k(\M_1)=\left\lbrace
\begin{array}{ll}
\bigoplus\limits_{\tau\in\PC^0_n} C^\alpha_{\tau}(\PC) & \mbox{if }k=1\\
\bigoplus\limits_{|\partial^0(\sigma)|\ge k}\left( C^\alpha_{\sigma}(\PC)\right)^{\binom{|\partial^0(\sigma)|}{k}} & \mbox{if } 2\le k\le \delta(\PC)\\
0 & \mbox{if } k>\delta(\PC)
\end{array}
\right.,
\]
where $\delta(\PC)=\max_{\sigma\in\PC_{n+1}}\{|\partial^0(\sigma)|\}$.  As we saw in the proof of Corollary~\ref{cor:freegens}, $C^\alpha_{\sigma}(\PC)\cong S(-\lambda(\sigma))$, hence
\[
\reg (C_k(\M_1))=\max\{\lambda(\sigma)|\sigma\in\PC_{n+1}\}\le f(\PC)
\]
for every $k$ with $2\le k \le \delta(\PC)$.  Now the conclusion follows from Corollary~\ref{cor:lesreg}.
\end{proof}

At this point we see that to obtain more precise results for projective dimension one it is necessary to understand the ring $C^\alpha_\tau(\PC)$ of splines vanishing outside the star of a codimension one face.

\begin{prop}\label{prop:idsyz}
Let $\tau\in\PC^0_n$ be an interior codimension one face of $\PC$, and $\sigma_1,\sigma_2$ the two facets of $\mbox{st}(\tau)$, the star of $\tau$.  Set $L_\tau=l^{\alpha(\tau)+1}_\tau$, $L_1=\Lambda(\sigma_1)/L_\tau,L_2=\Lambda(\sigma_2)/L_\tau$.  Define the ideal $K(\tau)$ by
\[
K(\tau)=\left\langle L_1,L_2,L_\tau \right\rangle
\]
We have a graded isomorphism
\[
C^\alpha_\tau(\PC)\cong\left\lbrace
\begin{array}{ll}
S(-\mbox{deg }L_\tau-\mbox{deg }L_2)\oplus S(-\mbox{deg }L_1) & \mbox{if } L_1\in\left\langle L_2,L_\tau \right\rangle\\
S(-\mbox{deg }L_\tau-\mbox{deg }L_1)\oplus S(-\mbox{deg }L_2) & \mbox{if } L_2\in\left\langle L_1,L_\tau \right\rangle\\
S(-\mbox{deg }L_1-\mbox{deg }L_2)\oplus S(-\mbox{deg }L_\tau) & \mbox{if } L_\tau\in\left\langle L_1,L_2 \right\rangle\\
\syz( K(\tau)) & \mbox{otherwise},
\end{array}\right.
\]
where $\syz (K(\tau))$ is the module of syzygies on the ideal $K(\tau)$.
\end{prop}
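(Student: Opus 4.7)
The plan is to identify $C^\alpha_\tau(\PC)$ with $\syz(K(\tau))$ via a direct parametrization of splines supported on $\mbox{st}(\tau)$, and then to analyze the syzygy module case by case.

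First I would parametrize the splines. Such an $F$ is determined by its restrictions $F_i := F|_{\sigma_i}$, and the defining conditions of $C^\alpha_\tau(\PC)$ translate to $l_\gamma^{\alpha(\gamma)+1}\mid F_i$ for every codim-one face $\gamma\in\partial\sigma_i\setminus\{\tau\}$ (arising either from vanishing on the adjacent facet outside $\mbox{st}(\tau)$ or from boundary vanishing), together with $l_\tau^{\alpha(\tau)+1}\mid F_1-F_2$. A key geometric observation is that no two distinct codim-one faces of the convex facet $\sigma_i$ lie on the same affine hyperplane, so the forms $l_\gamma$ are pairwise coprime; the divisibilities on $F_i$ therefore collapse to $L_i\mid F_i$. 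Writing $F_i = L_i A_i$, the smoothness across $\tau$ becomes $L_1A_1 - L_2A_2 = L_\tau A_3$ for some $A_3\in S$, so $F\mapsto(A_1,-A_2,-A_3)$ is an $S$-linear bijection onto $\syz(K(\tau))$. A degree check ($\deg F = \deg L_i+\deg A_i$) confirms that this is a graded isomorphism, which settles the ``otherwise'' case.

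For the degenerate cases I would argue symmetrically; take $L_1 = PL_2+QL_\tau$ as illustration. Then $s_1 := (1,-P,-Q)$ is a syzygy homogeneous of degree $\deg L_1$, and subtracting $A_1 s_1$ from any element of $\syz(K(\tau))$ kills the first coordinate, so every syzygy on $K(\tau)$ is $A_1 s_1$ plus a syzygy on $(L_2,L_\tau)$ included by $(B,C)\mapsto(0,B,C)$. The same hyperplane observation shows $l_\tau\nmid L_2$, hence $\gcd(L_2,L_\tau)=1$ and $\syz(L_2,L_\tau)$ is free of rank one on the Koszul relation $(L_\tau,-L_2)$. Therefore $\syz(K(\tau))$ is free on $s_1$ and $(0,L_\tau,-L_2)$, with degrees $\deg L_1$ and $\deg L_\tau+\deg L_2$, as claimed. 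The case $L_2\in\langle L_1,L_\tau\rangle$ is identical after swapping roles. For $L_\tau\in\langle L_1,L_2\rangle$ one needs $\gcd(L_1,L_2)=1$ to run the analog; but any common factor of $L_1,L_2$ would then divide $L_\tau = l_\tau^{\alpha(\tau)+1}$ and hence be a power of $l_\tau$, which the hyperplane observation again forbids.

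The main subtlety is keeping the coprimality arguments consistent across the cases, but each of them reduces to the single geometric fact that distinct codim-one faces of a convex polytope cannot share an affine hyperplane; once this is established, the remaining algebra is a routine splitting of syzygies on two coprime elements.
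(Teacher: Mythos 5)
Your proof is correct and follows essentially the same route as the paper's: both identify $C^\alpha_\tau(\PC)$ with the syzygies on the row matrix $\begin{bmatrix}L_1 & L_2 & L_\tau\end{bmatrix}$ via the divisibility conditions $L_i\mid F_i$ and $L_\tau\mid F_1-F_2$, and both handle the non-minimally-generated cases by exhibiting the two free generators (the relation expressing one generator in terms of the other two, together with the Koszul syzygy on the remaining coprime pair). The extra justifications you supply --- that distinct codimension-one faces of a convex facet span distinct hyperplanes, so the forms $l_\gamma$ are pairwise coprime, the divisibilities collapse to $L_i\mid F_i$, and the coprimality needed for freeness in the degenerate cases holds --- are details the paper leaves implicit.
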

\begin{proof}
Let $F\in C^\alpha_\tau(\PC)$ and set $F_1=F|_{\sigma_1},F_2=F|_{\sigma_2}$.  Then there are polynomials $G_1,G_2,G_3$ satisfying the following relations.
\[
\begin{array}{rl}
F_1= & G_1L_1\\
F_2= & G_2L_2\\
F_2-F_1= & G_3L_{\tau}
\end{array}
\]
Taking the alternating sum of the above equations yields
\begin{eqnarray}\label{eqn:syzrel}
G_1 L_1-G_2 L_2+G_3L_\tau= & 0.
\end{eqnarray}
Hence $F=(F_1,F_2)$ gives rise to a syzygy on the columns of the matrix
\[
M=
\begin{bmatrix}
L_1 & L_2 & L_\tau
\end{bmatrix}
\]
Now suppose given a syzygy $(G_1,G_2,G_3)$ on the columns of $M$.  We obtain a spline $F\in C^\alpha_\tau(\PC)$ by setting $F_1=G_1L_1,F_2=G_2L_2$, hence $C^\alpha_\tau(\PC)$ is isomorphic to the syzygies on the columns $M$.  If $K(\tau)$ is minimally generated by $L_1,L_2,$ and $L_\tau$, we obtain $C^\alpha_\tau(\PC)\cong \syz (K(\tau))$.  Otherwise we obtain the cases listed above.  For instance, if $L_1\in\langle L_2,L_\tau \rangle$, then there exist polynomials $f,g\in S$ so that $L_1=fL_2+gL_\tau$ and $\syz (M)$ is generated by
\[
\begin{bmatrix}
0\\
L_\tau\\
-L_2
\end{bmatrix},
\begin{bmatrix}
1\\
-f\\
-g
\end{bmatrix},
\]
of degrees $\mbox{deg }L_2+\mbox{deg }L_\tau $ and $\mbox{deg }L_1$, respectively.  The other cases follow similarly.
\end{proof}

\begin{prop}\label{prop:edgereg}
Let $\PC\subset\R^3$ be a central complex, and $\tau\in\PC^0_2$ a codimension one face of $\PC$.  Define
\[
\lambda(\tau)=\lambda(\mbox{st}(\tau))+\alpha(\tau)+1=\sum\limits_{\gamma\in(\mbox{st}(\tau))_2} \alpha(\gamma)+1.
\]
Then $\reg (C^\alpha_\tau (\PC))\le \lambda(\tau)-1$ unless $\alpha(\gamma)=-1$ for all $\gamma\neq\tau\in(\mbox{st}(\tau))$, when $\reg (C^\alpha_\tau(\PC))=\alpha(\tau)+1$.
\end{prop}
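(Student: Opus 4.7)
The plan is to apply the case analysis of Proposition~\ref{prop:idsyz} and handle each case directly. Write $d_1 = \deg L_1$, $d_2 = \deg L_2$, $d_\tau = \deg L_\tau = \alpha(\tau)+1$, so that $\lambda(\tau) = d_1 + d_2 + d_\tau$. The exceptional hypothesis $\alpha(\gamma) = -1$ for all $\gamma \ne \tau$ in $\mbox{st}(\tau)$ is equivalent to $L_1 = L_2 = 1$ (i.e.\ $d_1 = d_2 = 0$); Proposition~\ref{prop:idsyz} then places us in its third degenerate case and yields $C^\alpha_\tau(\PC) \cong S \oplus S(-d_\tau)$, giving $\reg(C^\alpha_\tau(\PC)) = d_\tau = \alpha(\tau)+1$ as claimed.

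Outside this exceptional subcase, each of the three degenerate alternatives of Proposition~\ref{prop:idsyz} expresses $C^\alpha_\tau(\PC)$ as a free $S$-module on two generators whose degrees are written down explicitly. A short arithmetic check, using $d_\tau \ge 1$ and the hypothesis that at least one of $d_1, d_2$ is positive, shows the larger of the two generating degrees is at most $\lambda(\tau)-1$ in every alternative.

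For the remaining case $C^\alpha_\tau(\PC)\cong\syz(K(\tau))$ with each $d_i \ge 1$, I would use the short exact sequence
\[
0 \to \syz(K(\tau)) \to S(-d_1) \oplus S(-d_2) \oplus S(-d_\tau) \to K(\tau) \to 0
\]
together with Proposition~\ref{prop:regseq}(1) to reduce the claim to the bound $\reg(K(\tau)) \le \lambda(\tau) - 2$. A key geometric observation is that $K(\tau)$ has codimension exactly $2$ or $3$ in $S = \R[x_0, x_1, x_2]$: any common irreducible factor of $L_1, L_2, L_\tau$ would have to divide $L_\tau = l_\tau^{d_\tau}$ and hence equal $l_\tau$, but $l_\tau$ is coprime to both $L_1$ and $L_2$ since distinct codimension-one faces of $\PC \subset \R^3$ have distinct affine spans.

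In codimension $3$, $L_1, L_2, L_\tau$ form a regular sequence and the Koszul complex is a minimal free resolution, directly yielding $\reg(K(\tau)) = \lambda(\tau) - 2$. In codimension $2$, the Hilbert-Burch theorem applied to (Cohen-Macaulay) $S/K(\tau)$ produces a minimal resolution $0 \to S(-e_1) \oplus S(-e_2) \xrightarrow{M} S(-d_1) \oplus S(-d_2) \oplus S(-d_\tau) \to K(\tau) \to 0$ with $e_1 + e_2 = d_1 + d_2 + d_\tau$ from a degree count on the $2 \times 2$ minors $\pm L_1, \pm L_2, \pm L_\tau$ of $M$. The delicate step is to show $\min(e_1, e_2) \ge 2$: each column of $M$ must contain some nonzero entry (else $M$ cannot have rank $2$), and every nonzero entry at position $(i,j)$ has degree $e_j - d_i \ge 1$ by minimality, so $e_j \ge \min_i(d_i) + 1 \ge 2$. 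This yields $\max(e_1, e_2) \le \lambda(\tau) - 2$ and hence $\reg(K(\tau)) \le \lambda(\tau) - 2$. The main anticipated obstacle is verifying that $S/K(\tau)$ is Cohen-Macaulay so that Hilbert-Burch applies; this should follow from the specific form $L_\tau = l_\tau^{d_\tau}$ and the coprimality properties of the generators, though a careful justification is needed.
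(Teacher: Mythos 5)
Your route is genuinely different from the paper's, and it breaks at exactly the step you flagged: $S/K(\tau)$ need \emph{not} be Cohen--Macaulay when $K(\tau)$ has codimension $2$, so Hilbert--Burch is not available. Concretely, let $\PC$ consist of the two tetrahedra $\sigma_1=\mbox{conv}(\mathbf{0},e_1,e_2,e_3)$ and $\sigma_2=\mbox{conv}(\mathbf{0},e_1,e_2,-e_3)$ glued along $\tau=\mbox{conv}(\mathbf{0},e_1,e_2)$, with $\alpha(\tau)=0$, and on the four boundary $2$-faces through the origin assign $\alpha+1$ equal to $2$ on the $x=0$ face of $\sigma_1$, $1$ on its $y=0$ face, $1$ on the $x=0$ face of $\sigma_2$, and $2$ on its $y=0$ face. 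Then $L_\tau=z$, $L_1=x^2y$, $L_2=xy^2$, and $K(\tau)=\langle x^2y,\,xy^2,\,z\rangle$ is minimally generated by these three forms and has codimension $2$; but $S/K(\tau)\cong \R[x,y]/(x^2y,xy^2)$ has the nonzero socle element $xy$ (it is killed by $x$, $y$, and $z$ yet does not lie in $K(\tau)$), so the maximal ideal is an associated prime, the depth is $0$, and $S/K(\tau)$ is not Cohen--Macaulay; its minimal free resolution has length $3$, not $2$. (In this example one can compute $\reg(\syz(K(\tau)))=4\le\lambda(\tau)-1=6$, so the proposition is not contradicted --- only your proof of it.) The failure happens precisely because $L_1$ and $L_2$ can share linear factors: a single hyperplane may cut both $\sigma_1$ and $\sigma_2$ in codimension-one faces.

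The paper avoids resolving $K(\tau)$ in the nondegenerate case altogether: it exhibits an explicit free submodule $N(\tau)\subset C^\alpha_\tau(\PC)$ generated by $\Lambda(\sigma_1)e_1$, $\Lambda(\sigma_2)e_2$, and $\Lambda(\mbox{st}(\tau))(e_1+e_2)$, whose single syzygy $L_\tau F_\tau=L_2F_1+L_1F_2$ gives $\reg(N(\tau))=\lambda(\tau)-1$ exactly; it then checks by localizing at height-one primes that $C^\alpha_\tau(\PC)/N(\tau)$ has codimension at least $2$, and concludes by Proposition~\ref{prop:RegDepth} together with $\mbox{pd}(C^\alpha_\tau(\PC))\le 1$. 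If you want to keep your reduction to $\reg(K(\tau))\le\lambda(\tau)-2$, you need a separate argument covering the non-Cohen--Macaulay codimension-$2$ case. Your treatment of the degenerate alternatives and of the codimension-$3$ (Koszul) case is fine, with the small caveat that in alternative (1) of Proposition~\ref{prop:idsyz} you should note that $L_1\in\langle L_2,L_\tau\rangle$ with $L_2,L_\tau$ nonunits forces $\deg L_1\ge 1$, which is what the arithmetic there actually requires.
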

\begin{proof}
Let $L_1,L_2,L_\tau$ be as defined in proposition~\ref{prop:idsyz}.  Then
\[
\begin{array}{rl}
\mbox{deg }L_1= & \left(\sum\limits_{\gamma\in(\sigma_1)_2} (\alpha(\gamma)+1)\right)-\alpha(\tau)-1\\
\mbox{deg }L_1= & \left(\sum\limits_{\gamma\in(\sigma_2)_2} (\alpha(\gamma)+1)\right)-\alpha(\tau)-1\\
\mbox{deg }L_\tau= & \alpha(\tau)+1,
\end{array}
\]
If the ideal $K(\tau)=\langle L_1,L_2,L_\tau \rangle$ is not minimally generated by $L_1,L_2$, and $L_\tau$, then $C^\alpha_\tau(\PC)$ is free, generated in degrees indicated by Proposition~\ref{prop:idsyz}.  By that description $\reg (C^\alpha_\tau(\PC))\le \lambda(\tau)-1$ unless $\alpha(\gamma)=-1$ for all $\gamma\neq\tau\in(\mbox{st}(\tau))$, when $\reg (C^\alpha_\tau(\PC))=\alpha(\tau)+1$.  So assume $K(\tau)$ is minimally generated by $L_1,L_2,L_\tau$ and $C^\alpha_\tau(\PC)\cong\syz (K(\tau))$.

We define a submodule $N(\tau)$ of $C^\alpha_{\tau}(\PC)$ as follows.  Let $\sigma_1,\sigma_2$ be the two facets of $\mbox{st}(\tau)$ and $Se_1+Se_2$ the free $S$-module on generators $e_1,e_2$ corresponding to $\sigma_1,\sigma_2$.  Define $N(\tau)$ to be the submodule of $C^\alpha_{\tau}(\PC)$ generated by $F_1=\Lambda(\sigma_1)e_1,F_2=\Lambda(\sigma_2)e_2$, and $F_\tau=\Lambda(\mbox{st}(\tau))(e_1+e_2)$.  There is a single nontrivial syzygy among $F_1,F_2,F_\tau$ given by $L_\tau F_\tau =L_2F_1+L_1F_2$.  So $N(\tau)$ has minimal free resolution
\[
\begin{array}{cc}
& S(-\lambda(\sigma_1))\\
& \oplus\\
0\longrightarrow S(-\lambda(\mbox{st}(\tau))-\alpha(\tau)-1) \longrightarrow &  S(-\lambda(\mbox{st}(\tau)))\\
& \oplus\\
& S(-\lambda(\sigma_2))
\end{array}
\]
From Definition~\ref{defn:regdef} and the free resolution above we see that $\reg (N(\tau))=\lambda(\mbox{st}(\tau))+\alpha(\tau)=\lambda(\tau)-1$.

Now we show $\mbox{codim}(C^\alpha_{\tau}(\PC)/N(\tau))\ge 2$.  It suffices to show that $(C^\alpha_{\tau}(\PC))_P=N(\tau)_P$ for every prime of codimension one.  Since $S$ is a UFD, primes of codimension one are principle, generated by a single irreducible polynomial.  If $P\neq \langle l_\gamma \rangle$ for any $\gamma\in (\mbox{st}(\tau))_2$ then
\[
(C^\alpha_{\tau}(\PC))_P=N(\tau)_P=S^2_P.
\]
If $P=\langle l_\gamma \rangle$ for some $\gamma \in \partial^0(\mbox{st}(\tau))$, then
\[
(C^\alpha_{\tau}(\PC))_P=N(\tau)_P=l^{\alpha(\gamma)+1}_\gamma S_P \oplus S_P.
\]
if $\mbox{aff}(\gamma)$ meets only one face $\gamma\in(\mbox{st}(\tau))_2$ or
\[
(C^\alpha_{\tau}(\PC))_P=N(\tau)_P=l^{\alpha(\gamma)+1}_\gamma S_P \oplus l^{\alpha(\gamma)+1}_\gamma S_P
\]
If $\mbox{aff}(\gamma)$ meets both $\sigma_1$ and $\sigma_2$ in a codimension one face. If $P=\langle l_\tau \rangle$, then
\[
(C^\alpha_{\tau}(\PC))_P=N(\tau)_P=(C^{\alpha(\tau)}(\mbox{st}(\tau)))_P.
\]
$\mbox{pd}(C^\alpha_{\tau}(\PC))\le 1$ follows by Corollary~\ref{cor:SplinePD}, because we assumed $\PC\subset\R^3$.  Since $\mbox{codim}(C^\alpha_{\tau}(\PC)/N(\tau))\ge 2$,
\[
\reg (C^\alpha_\tau(\PC))\le \reg( N(\tau))=\lambda(\tau)-1
\]
follows from Proposition~\ref{prop:RegDepth}.
\end{proof}

\begin{thm}\label{thm:main2}
Let $\PC\subset\R^3$ be a pure $3$-dimensional polytopal complex which is central and set $e(\PC)=\max\{\lambda(\tau)|\tau\in\PC^0_2\}$.  Then 
\begin{enumerate}
\item $\reg (C^\alpha(\PC))\le e(\PC)-1$
\item $\wp(C^\alpha(\PC))\le e(\PC)-3$
\end{enumerate}
In particular, $HP(C^\alpha(\PC),d)=\dim_\R C^r_d(\PC)$ for $d\ge e(\PC)-2$.
\end{thm}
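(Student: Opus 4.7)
My plan is to chain together the tools assembled in the preceding sections. Since $\PC \subset \R^3$ is central, pure, hereditary, and three-dimensional, Corollary~\ref{cor:SplinePD} gives $\mbox{pd}(C^\alpha(\PC)) \le 1$, so Theorem~\ref{thm:main0} immediately reduces the problem to bounding the regularity of $LS^{\alpha,1}(\PC)$, i.e.\ $\reg(C^\alpha(\PC)) \le \reg(LS^{\alpha,1}(\PC))$. The second half of Corollary~\ref{cor:SplinePD} then converts (1) into (2) via $\wp \le \reg - 2$, and the ``in particular'' clause is merely the restatement $HF=HP$ for $d > \wp$, so the real work is proving (1).

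Next, I invoke Proposition~\ref{prop:fakeres} to obtain the bounded exact sequence $C_\bullet(\M_1) \to LS^{\alpha,1}(\PC) \to 0$ (it is bounded because Proposition~\ref{prop:M1Res} gives $C_k(\M_1)=0$ for $k > \delta(\PC)$), and then Corollary~\ref{cor:lesreg} yields
\[
\reg(LS^{\alpha,1}(\PC)) \le \max_{k \ge 1}\bigl\{\reg(C_k(\M_1)) - (k-1)\bigr\}.
\]
For $k=1$, Proposition~\ref{prop:M1Res} gives $C_1(\M_1) = \bigoplus_{\tau \in \PC^0_2} C^\alpha_\tau(\PC)$, whose regularity is the maximum of the $\reg(C^\alpha_\tau(\PC))$; by Proposition~\ref{prop:edgereg} each such summand is bounded by $\lambda(\tau) - 1 \le e(\PC) - 1$. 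For $k \ge 2$, $C_k(\M_1)$ is a direct sum of copies of the free modules $C^\alpha_\sigma(\PC) \cong S(-\lambda(\sigma))$, hence has regularity at most $f(\PC):=\max_\sigma \lambda(\sigma)$; since any facet $\sigma$ contains an interior codim one face $\tau$ (by heredity, assuming $\PC$ has more than one facet) and the containment $(\sigma)_2 \subseteq (\mbox{st}(\tau))_2$ forces $\lambda(\tau) \ge \lambda(\sigma)$, I get $f(\PC) \le e(\PC)$ and hence $\reg(C_k(\M_1)) - (k-1) \le f(\PC) - 1 \le e(\PC) - 1$. Combining the two ranges yields $\reg(LS^{\alpha,1}(\PC)) \le e(\PC) - 1$ and proves (1).

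The main obstacle I anticipate is the exceptional case of Proposition~\ref{prop:edgereg}, where $\reg(C^\alpha_\tau(\PC)) = \alpha(\tau) + 1 = \lambda(\tau)$ strictly exceeds $\lambda(\tau) - 1$. This arises when every codim one face of $\mbox{st}(\tau)$ other than $\tau$ lies on $\partial \PC$ with smoothness $-1$, a degenerate configuration which, by heredity, forces $\mbox{st}(\tau) = \PC$; there one must either rule the case out at the outset or verify the inequality by hand by directly computing $C^\alpha(\PC)$ as the free module $S \oplus S(-(\alpha(\tau)+1))$ and checking the claim against the definition of $e(\PC)$. Apart from this single subtlety, the argument is a mechanical combination of the Cech-type complex of Section~\ref{sec:splineint}, the regularity inequalities of Section~\ref{sec:regularity}, and the local description of edge-supported splines in Proposition~\ref{prop:edgereg}.
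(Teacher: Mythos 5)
Your argument is correct and is essentially the paper's proof: the chain Theorem~\ref{thm:main0} $\rightarrow$ Proposition~\ref{prop:fakeres}/Proposition~\ref{prop:M1Res} $\rightarrow$ Corollary~\ref{cor:lesreg} is exactly what the paper packages as Theorem~\ref{thm:main1}, which it then combines with Proposition~\ref{prop:edgereg} just as you do (your check that every facet contains an interior $2$-face, so that $f(\PC)\le e(\PC)$, is a detail the paper leaves implicit). One substantive remark on the exceptional case you flag: you are right that it needs attention, but ``verifying the inequality by hand'' there actually reveals that part (1) fails --- when $\PC$ consists of two facets glued along $\tau$ with no boundary vanishing, $C^\alpha(\PC)\cong S\oplus S(-\alpha(\tau)-1)$ has regularity $\alpha(\tau)+1=\lambda(\tau)=e(\PC)$, not $e(\PC)-1$, although part (2) and the Hilbert-polynomial statement survive because the module is free (so $\wp=\reg-3$); the paper's proof silently ignores this degenerate configuration, so your instinct to isolate it is sound even if the proposed resolution should be ``exclude it and verify (2) directly'' rather than ``verify (1).''
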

\begin{proof}
(1) follows by applying Theorem~\ref{thm:main1} to Proposition~\ref{prop:edgereg}.  (2) follows from (1) by Corollary~\ref{cor:SplinePD}.
\end{proof}

Example~\ref{ex:HPolytopal} indicates that the bound given in Theorem~\ref{thm:main2} can be far from optimal.  In the next section we bound $\reg (C^\alpha_\tau(\Delta))$ more precisely for $\Delta\subset\R^3$ a central simplicial complex.

\section{Simplicial Regularity Bound}\label{sec:SStar}

In this section we analyze the regularity of the ring of splines $C^\alpha_\tau(\Delta)$ vanishing outside the star of $2$-face, for $\Delta\subset\R^3$ a pure three-dimensional hereditary simplicial complex which is central.  Again we assume $\alpha(\tau)=-1$ for $\tau\in\Delta_2$ with $\mathbf{0}\notin\mbox{aff}(\tau)$, so that $C^\alpha(\Delta)$ is a graded module over the polynomial ring $S=\R[x,y,z]$.  This means that $\mbox{st}(\tau)$ has at most five $2$-faces $\gamma$ for which $\alpha(\gamma)\ge 0$ ($\alpha(\tau)\ge 0$ is required).  We prove the following theorem.

\begin{thm}\label{thm:simpedgereg}
Let $\tau\in\Delta^0_2$ be a $2$-face.  Define
\[
M(\tau)=(\alpha(\tau)+1)+\max\{(\alpha(\gamma_1)+1)+(\alpha(\gamma_2)+1)|\gamma_1\neq\gamma_2\in(\mbox{st}(\tau))_2 \}.
\]
Then $\reg (C^\alpha_\tau(\Delta))\le M(\tau)$.
\end{thm}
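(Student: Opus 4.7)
The plan is to emulate Proposition~\ref{prop:edgereg} in the simplicial setting, applying Proposition~\ref{prop:RegDepth} with a submodule $N(\tau)\subset C^\alpha_\tau(\Delta)$ of regularity at most $M(\tau)$ and cokernel of codimension at least $2$. The simplicial hypothesis furnishes a crucial new ingredient: because $\Delta\subset\R^3$ is central at the common vertex $\mathbf{0}$ and each facet is a tetrahedron, each vertex of $\tau$ other than $\mathbf{0}$ lies on exactly three faces of $(\mbox{st}(\tau))_2$, namely $\tau$ itself and one face from each of $\sigma_1,\sigma_2$. The affine spans of these three faces therefore meet in a common line through $\mathbf{0}$, producing a linear dependency among the corresponding forms. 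Consequently $L_1, L_2, L_\tau$ from Proposition~\ref{prop:idsyz} fails to be a regular sequence, $K(\tau)$ has codimension $2$, and there are low-degree syzygies absent in the general polytopal setting.

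After dispatching the free cases of Proposition~\ref{prop:idsyz} by inspection, I would generate $N(\tau)$ by the on-facet splines $F_1=\Lambda(\sigma_1)e_1$ and $F_2=\Lambda(\sigma_2)e_2$ (each of degree $\lambda(\sigma_i)\le M(\tau)$ by maximality of the pair defining $M(\tau)$), together with one or more bridging splines of degree at most $M(\tau)$. Such a bridging spline $F_3$, with $(F_3)|_{\sigma_1}=L_1 g_1$, $(F_3)|_{\sigma_2}=L_2 g_2$, and $L_1 g_1-L_2 g_2\in\langle L_\tau\rangle$, is constructed by fixing a linear relation such as $l_{\gamma_2}=Al_\tau+Bl_{\gamma_3}$, expanding $l_{\gamma_2}^{\alpha(\gamma_2)+1}$ via the binomial theorem, and solving the resulting compatibility equation in $S/\langle L_\tau\rangle$; this replaces the single generator $F_\tau$ of Proposition~\ref{prop:edgereg} (of degree $\lambda(\tau)-1$) with lower-degree substitutes. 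A case split on whether the pair $\{\gamma_{(1)},\gamma_{(2)}\}$ realizing $M(\tau)$ lies in a single tetrahedron, splits across both, or includes $\tau$ itself is required, as the relevant dependency changes.

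Having fixed the generators, I would write down the free resolution of $N(\tau)$, bound the shifts by $M(\tau)+1$ to conclude $\reg N(\tau)\le M(\tau)$, and verify $C^\alpha_\tau(\Delta)/N(\tau)$ has codimension $\ge 2$ by localizing at each height-one prime $\langle l_\gamma\rangle$ of $S$ and checking $N(\tau)_P=C^\alpha_\tau(\Delta)_P$. The work of Tohaneanu-Minac cited in the introduction furnishes the regularity bounds for ideals of powers of linear forms needed at the most delicate prime $\langle l_\tau\rangle$. The main obstacle is the explicit construction of the bridging spline(s) and the verification of the codimension condition: both require carefully leveraging the linear dependencies among $l_\tau$ and pairs from $\{l_{\gamma_i}\}$, which is the structural feature of the simplicial case driving the improvement from the polytopal bound $\lambda(\tau)-1$ down to $M(\tau)$.
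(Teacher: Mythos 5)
Your geometric observation is the right starting point: in the simplicial central case the forms attached to the walls of $\mbox{st}(\tau)$ satisfy $w_i\in\mbox{span}(l_\tau,u_i)$, so $K(\tau)$ drops to codimension $2$ and $\syz(K(\tau))$ acquires non-Koszul syzygies of lower degree than $\lambda(\tau)-1$. You also handle the free cases of Proposition~\ref{prop:idsyz} correctly, and the on-facet generators $\Lambda(\sigma_i)e_i$ do have degree $\lambda(\sigma_i)\le M(\tau)$. But the core of your plan --- producing ``bridging splines'' of degree at most $M(\tau)$ by ``expanding via the binomial theorem and solving the resulting compatibility equation'' --- is precisely the hard content of the theorem, and you have not carried it out. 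A bridging spline is a pair $(g_1,g_2)$ with $u_1^{\alpha_1}u_2^{\alpha_2}g_1-w_1^{\beta_1}w_2^{\beta_2}g_2\in\langle l_\tau^{\alpha_\tau}\rangle$, so the admissible $g_2$ form exactly the colon ideal $\langle l_\tau^{\alpha_\tau},u_1^{\alpha_1}u_2^{\alpha_2}\rangle:(w_1^{\beta_1}w_2^{\beta_2})=I_1\cap I_2$ with $I_i=\langle l_\tau^{\alpha_\tau},u_i^{\alpha_i}\rangle:w_i^{\beta_i}$. Determining the generator degrees of these colon ideals, and then bounding the regularity of the resulting module (which requires controlling the \emph{syzygies} among your chosen generators of $N(\tau)$, not just their degrees --- Definition~\ref{defn:regdef} demands both), is what occupies the entire second half of the paper's argument: Proposition~\ref{prop:DescribeI} shows each $I_i$ is a complete intersection with explicitly computed degrees via a Hilbert--Burch matrix, Lemma~\ref{lem:Schur} and Lemma~\ref{lem:initial} identify the initial ideal of $I_i$ with a lex-segment ideal via a nonvanishing Schur-function determinant, and Proposition~\ref{prop:initial2} and Corollary~\ref{cor:initial3} convert this into the bound $\reg(S/(I_1\cap I_2))\le M(\tau)-\beta_1-\beta_2-1$ through a four-way case analysis on the exponents. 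None of this is a routine computation, and your appeal to Tohaneanu--Minac does not close it: their result covers only the uniform-smoothness specialization, and the paper explicitly notes their methods do not apply directly to $I_1\cap I_2$ with mixed parameters.

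A secondary structural point: the paper does not in fact run Theorem~\ref{thm:simpedgereg} through Proposition~\ref{prop:RegDepth} with a proper submodule $N(\tau)$. It reduces to $\reg(S/K(\tau))\le M(\tau)-2$ via two applications of Proposition~\ref{prop:regseq}, then uses the exact sequence $0\to S(-\beta_1-\beta_2)/(Q:w_1^{\beta_1}w_2^{\beta_2})\to S/Q\to S/K(\tau)\to 0$ with $Q$ a complete intersection. Your detour through a submodule adds the burden of verifying $\mbox{codim}(C^\alpha_\tau(\Delta)/N(\tau))\ge 2$ without avoiding the colon-ideal computation; also note that the delicate prime is not $\langle l_\tau\rangle$ (where the module is just $C^{\alpha(\tau)}(\mbox{st}(\tau))_P$, easily handled as in Proposition~\ref{prop:edgereg}) --- the difficulty is the global degree bound on the non-Koszul syzygies, which is not a local question at any single height-one prime. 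As written, the proposal identifies the correct phenomenon but defers the proof of the quantitative statement to an unexecuted calculation.
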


Before proving Theorem~\ref{thm:simpedgereg} we derive a couple of corollaries.

\begin{thm}\label{thm:main3}
Let $\Delta\subset\R^3$ be a pure $3$-dimensional hereditary simplicial complex which is central.  For $\tau\in\Delta^0_2$, let $M(\tau)$ be defined as in Theorem~\ref{thm:simpedgereg}.  Then
\begin{enumerate}
\item $\reg (C^\alpha(\Delta))\le \max\{M(\tau)|\tau\in\Delta^0_2\}$
\item $\wp (C^\alpha(\Delta))\le \max\{M(\tau)|\tau\in\Delta^0_2\}-2$
\end{enumerate}
In particular, $HP(C^\alpha(\Delta),d)=\dim_\R C^r(\Delta)_d$ for $d\ge \max\{M(\tau)|\tau\in\Delta^0_2\}-1$.
\end{thm}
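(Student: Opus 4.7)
The plan is to deduce Theorem~\ref{thm:main3} from Theorem~\ref{thm:simpedgereg} via Theorem~\ref{thm:main1}, in direct parallel with how Theorem~\ref{thm:main2} is derived from Proposition~\ref{prop:edgereg}. Since $\Delta\subset\R^3$ is central, pure, hereditary, and $3$-dimensional, Corollary~\ref{cor:SplinePD}(1) yields $\mbox{pd}(C^\alpha(\Delta))\le 1$, and hence Theorem~\ref{thm:main1} gives
\[
\reg(C^\alpha(\Delta)) \le \max\{f(\Delta)-1,\, T\}, \qquad T=\max_{\tau\in\Delta^0_2}\reg(C^\alpha_\tau(\Delta)).
\]
Theorem~\ref{thm:simpedgereg} immediately bounds $T\le\max_\tau M(\tau)$, so the proof of (1) reduces to the combinatorial inequality $f(\Delta)-1\le \max_\tau M(\tau)$.

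To establish this inequality, fix a tetrahedron $\sigma\in\Delta_3$ with $\lambda(\sigma)>0$. By centrality, every interior $2$-face of $\Delta$ contains the origin; placing $\mathbf{0}$ at a vertex of the common face, $\mathbf{0}$ becomes a vertex of $\sigma$, so exactly three of the four $2$-faces of $\sigma$ contain $\mathbf{0}$, while the fourth face $\gamma_0$ (opposite that vertex) satisfies $\mathbf{0}\notin\mbox{aff}(\gamma_0)$ by non-degeneracy, forcing $\alpha(\gamma_0)=-1$ under the central convention. Hence $\lambda(\sigma)$ sums at most three nonzero terms. Pick any interior $2$-face $\tau$ of $\sigma$; such a $\tau$ exists as long as $\sigma$ shares a codimension-one face with another facet, which follows from hereditariness in nontrivial complexes. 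Since $\sigma\subseteq\mbox{st}(\tau)$, the three $2$-faces of $\sigma$ other than $\tau$ all lie in $(\mbox{st}(\tau))_2$, and selecting the two of positive weight for the $M(\tau)$ maximum yields
\[
M(\tau)\ge (\alpha(\tau)+1)+\sum_{\gamma\in\sigma_2\setminus\{\tau\},\ \alpha(\gamma)\ge 0}(\alpha(\gamma)+1)=\lambda(\sigma),
\]
so $\max_\tau M(\tau)\ge f(\Delta)>f(\Delta)-1$.

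Combining the two bounds proves (1), and (2) is immediate from $\wp(C^\alpha(\Delta))\le\reg(C^\alpha(\Delta))-2$ via Corollary~\ref{cor:SplinePD}(2); the stated degree range for $HP=HF$ is the resulting reformulation from Theorem~\ref{thm:HFHP}. The main technical content of the section is Theorem~\ref{thm:simpedgereg} itself; once that sharper local bound is in hand, the deduction above is essentially formal. The subtle point is the face-counting argument: its success depends on two features of the simplicial central setting that are not available in the general polytopal case, namely that centrality forces the $2$-face opposite the origin-vertex of each tetrahedron to contribute nothing to $\lambda(\sigma)$, and that the simplex structure guarantees every other $2$-face of $\sigma$ lies in $(\mbox{st}(\tau))_2$ for any interior $\tau\subset\sigma$. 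This is exactly why Theorem~\ref{thm:main3} is strictly sharper than Theorem~\ref{thm:main2} and recovers the Hong--Ibrahim--Schumaker bound $3r+2$ in the uniform planar-simplicial case.
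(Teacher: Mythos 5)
Your proof is correct and follows essentially the same route as the paper: combine Theorem~\ref{thm:main1} (valid since $\mbox{pd}(C^\alpha(\Delta))\le 1$ by Corollary~\ref{cor:SplinePD}) with the local bound of Theorem~\ref{thm:simpedgereg}, then pass from regularity to the postulation number. Your explicit check that $f(\Delta)-1\le\max_\tau M(\tau)$ --- using that at most three of the four face-planes of a nondegenerate tetrahedron can contain the origin, so $\lambda(\sigma)$ has at most three nonzero summands all lying in $(\mbox{st}(\tau))_2$ for an interior $\tau\subset\sigma$ --- is a detail the paper leaves implicit, and it is correct.
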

\begin{proof}
(1) follows by applying Theorem~\ref{thm:main1} to Theorem~\ref{thm:simpedgereg}, (2) follows by applying Theorem~\ref{thm:HFHP} to (1).
\end{proof}

Setting $\alpha(\tau)=r$ for all $\tau\in\Delta^0_2$, we obtain

\begin{cor}\label{cor:simpreg}
Let $\Delta\subset\R^3$ be a pure $3$-dimensional hereditary simplicial complex which is central.  Then
\begin{enumerate}
\item $\reg (C^\alpha(\Delta))\le 3r+3$
\item $\wp (C^\alpha(\Delta))\le 3r+1$
\end{enumerate}
In particular, $HP(C^r(\Delta),d)=\dim_\R C^r(\Delta)_d$ for $d\ge 3r+2$.
\end{cor}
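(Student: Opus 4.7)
The corollary is an immediate specialization of Theorem~\ref{thm:main3} to the uniform-smoothness case $\alpha(\tau) = r$ for every $\tau \in \Delta^0_2$, and the plan is simply to compute $M(\tau)$ explicitly under this hypothesis and then substitute.

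Under uniform smoothness, each face $\gamma \in (\mbox{st}(\tau))_2$ satisfies $\alpha(\gamma)+1 = r+1$ if $\gamma$ is interior to $\Delta$ and $\alpha(\gamma)+1 = 0$ otherwise (boundary faces carry the convention $\alpha(\gamma) = -1$). Either way, any pair $\gamma_1 \ne \gamma_2 \in (\mbox{st}(\tau))_2$ contributes
\[
(\alpha(\gamma_1)+1)+(\alpha(\gamma_2)+1) \le 2(r+1),
\]
so $M(\tau) \le (r+1) + 2(r+1) = 3(r+1) = 3r+3$ for every interior $\tau$. Hence $\max_{\tau \in \Delta^0_2}\{M(\tau)\} \le 3r+3$. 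Substituting this inequality into Theorem~\ref{thm:main3}(1) yields $\reg(C^\alpha(\Delta)) \le 3r+3$, which is statement (1) of the corollary; substituting into Theorem~\ref{thm:main3}(2) yields $\wp(C^\alpha(\Delta)) \le 3r+3-2 = 3r+1$, which is statement (2).

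For the final ``In particular'' assertion, recall that by the definition of the postulation number, $HF(M,d) = HP(M,d)$ for every $d > \wp(M)$. Hence (2) immediately gives $HP(C^r(\Delta),d) = \dim_\R C^r(\Delta)_d$ for $d \ge 3r+2$, as claimed.

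There is no substantive obstacle in this proof: essentially all the work has been done in Theorem~\ref{thm:main3}, which in turn rests on Theorem~\ref{thm:simpedgereg}. The only point one must check explicitly is the pair-wise bound $(\alpha(\gamma_1)+1)+(\alpha(\gamma_2)+1) \le 2(r+1)$, which is immediate from the two possible values of each summand in the uniform case. Note that the bound on $M(\tau)$ is in fact attained as soon as $\mbox{st}(\tau)$ contains at least one interior 2-face other than $\tau$ itself (for instance, taking $\gamma_1 = \tau$ and $\gamma_2$ any other interior 2-face realizes $2(r+1)$), so the estimate $3r+3$ is the natural one coming out of Theorem~\ref{thm:main3} in this setting.
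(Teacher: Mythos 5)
Your proposal is correct and matches the paper exactly: the corollary is obtained by specializing Theorem~\ref{thm:main3} to $\alpha(\tau)=r$, noting each summand $\alpha(\gamma)+1$ is at most $r+1$ so that $M(\tau)\le 3(r+1)$, and the paper treats this as immediate. The explicit pairwise bound and the remark on when $3r+3$ is attained are fine and consistent with the conventions in \S~\ref{sec:prelim}.
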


This result was obtained in the case of $C^r(\wDelta)$, for simplicial $\Delta\subset\R^2$, by Hong~\cite{HongDong} and Ibrahim and Schumaker~\cite{SuperSpline} (see Table~\ref{tbl:PostBounds} in the introduction).  Before proving Theorem~\ref{thm:main3} we set up some notation. Figure~\ref{fig:EStar} depicts our situation.  We will abuse notation and write $v_i$ both for the corresponding edge of $\mbox{st}(\tau)$ and for the vector we obtain by taking positive real multiples of this edge.

\begin{figure}[htp]
\centering
\begin{tikzpicture}[scale=.3,x={(-1.414 cm, -1.414 cm)},y={(2 cm,0 cm)},z={(0 cm, 2 cm)},>=stealth]

\tikzstyle{gry}=[fill=gray!50!white,line width=1 pt,join=round]
\tikzstyle{gryo}=[fill=gray,thick,join=round]

\node[shape=coordinate] (v1) at (4,.4,1){};
\node[shape=coordinate] (v2) at (.4,4,1){};
\node[shape=coordinate] (v4) at (1,2,-3){};
\node[shape=coordinate] (v3) at (2,1,5){};

\draw[->] (0,0,0)--node[at end, right]{$x$}(4,0,0);
\draw[->] (0,0,0)--node[at end, below]{$y$}(0,4,0);
\draw[->] (0,0,0)--node[at end, right]{$z$}(0,0,4);

\draw[->,thin] (0,0,0)--node[at end, below]{$v_1$} (4.4,.44,1.1);
\draw[->,thin] (0,0,0)--node[at end, below right]{$v_2$} (.44,4.4,1.1);
\draw[->,thin] (0,0,0)--node[at end, left]{$v_3$} (2.2,1.1,5.5);
\draw[->,thin] (0,0,0)--node[at end, left]{$v_4$} (1.1,2.2,-3.3);

\filldraw[gry] (v2)--(0,0,0)--(v4)--cycle;
\draw (7/15, 2, -2/3) node {$e_{24}$};
\filldraw[gry] (v1)--(0,0,0)--(v4)--cycle;
\draw (5/3, 4/5, -2/3) node {$e_{14}$};
\filldraw[gryo] (v1)--(0,0,0)--(v2)--cycle;
\draw (22/15, 22/15, 2/3) node {$\tau$};
\filldraw[gry] (v1)--(0,0,0)--(v3)--cycle;
\draw (2, 7/15, 2) node {$e_{13}$};
\filldraw[gry] (v2)--(0,0,0)--(v3)--cycle;
\draw (4/5, 5/3, 2) node {$e_{23}$};

\end{tikzpicture}
\caption{$\mbox{st}(\tau)$}\label{fig:EStar}
\end{figure}

Let $u_1,u_2\in S$ be the forms corresponding to the $2$-faces $e_{13},e_{23}$, let $w_1,w_2$ be the forms corresponding to the $2$-faces $e_{14},e_{24}$, and $l_\tau$ be the form corresponding to $\tau$ (for now do this without coordinates).  Let $\alpha_\tau=\alpha(\tau)+1,\alpha_1=\alpha(e_{13})+1,\alpha_2=\alpha(e_{23})+1,\beta_1=\alpha(e_{14})+1,\beta_2=\alpha(e_{24})+1$ be the exponents to appear on $l_\tau, u_1,u_2,w_1,w_2$ corresponding to the smoothness parameters specified by $\alpha$.  The following lemma is a special case of Proposition~\ref{prop:idsyz}.

\begin{lem}\label{lem:splineideal}
Let $K(\tau)=(l^{\alpha_\tau}_\tau,u^{\alpha_1}_1 u^{\alpha_2}_2,w^{\beta_1}_1w^{\beta_2}_2)$.  Then we have a graded isomorphism
\[
C^\alpha_\tau(\Delta)\cong\left\lbrace
\begin{array}{ll}
S(-\alpha_\tau-\beta_1-\beta_2)\oplus S(-\alpha_1-\alpha_2) & \mbox{if } u^{\alpha_1}_1 u^{\alpha_2}_2\in\left\langle w^{\beta_1}_1w^{\beta_2}_2,l^{\alpha_\tau}_\tau \right\rangle\\
S(-\alpha_\tau-\alpha_1-\alpha_2)\oplus S(-\beta_1-\beta_2) & \mbox{if } w^{\beta_1}_1w^{\beta_2}_2\in\left\langle u^{\alpha_1}_1 u^{\alpha_2}_2,l^{\alpha_\tau}_\tau \right\rangle\\
S(-\alpha_1-\alpha_2-\beta_1-\beta_2)\oplus S(-\alpha_\tau) & \mbox{if } l^{\alpha_\tau}_\tau\in\left\langle u^{\alpha_1}_1 u^{\alpha_2}_2,w^{\beta_1}_1w^{\beta_2}_2 \right\rangle\\
\syz( K(\tau)) & \mbox{otherwise},
\end{array}\right.
\]
where $\syz (K(\tau))$ is the module of syzygies on the ideal $K(\tau)$.
\end{lem}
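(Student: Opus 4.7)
The plan is to deduce Lemma \ref{lem:splineideal} as a direct specialization of Proposition \ref{prop:idsyz} to the star pictured in Figure~\ref{fig:EStar}. The translation amounts to computing the three generators $L_1, L_2, L_\tau$ of $K(\tau)$ (in the notation of Proposition~\ref{prop:idsyz}) explicitly in this simplicial setting and checking that their degrees yield the shifts stated in the lemma.

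First I would describe $\mbox{st}(\tau)$ as the union of two tetrahedra $\sigma_1, \sigma_2$ glued along $\tau$, with $\sigma_1$ the cone over the triangle on $v_1, v_2, v_3$ and $\sigma_2$ the cone over the triangle on $v_1, v_2, v_4$. Because $\Delta$ is central and the convention $\alpha(\gamma)=-1$ is imposed for any $2$-face $\gamma$ not containing $\mathbf{0}$, the only $2$-faces of $\sigma_i$ contributing a nontrivial factor to $\Lambda(\sigma_i)$ are those passing through the origin. For $\sigma_1$ these are $\tau, e_{13}, e_{23}$, and for $\sigma_2$ they are $\tau, e_{14}, e_{24}$. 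Thus
\[
\Lambda(\sigma_1) = l^{\alpha_\tau}_\tau\, u_1^{\alpha_1} u_2^{\alpha_2}, \qquad
\Lambda(\sigma_2) = l^{\alpha_\tau}_\tau\, w_1^{\beta_1} w_2^{\beta_2}.
\]

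Next I would form the generators $L_\tau = l^{\alpha_\tau}_\tau$, $L_1 = \Lambda(\sigma_1)/L_\tau = u_1^{\alpha_1} u_2^{\alpha_2}$, and $L_2 = \Lambda(\sigma_2)/L_\tau = w_1^{\beta_1} w_2^{\beta_2}$ that appear in Proposition~\ref{prop:idsyz}. With these identifications $K(\tau) = \langle L_1, L_2, L_\tau\rangle$ matches the ideal in the statement of the lemma, and the degrees satisfy $\mbox{deg}\,L_\tau = \alpha_\tau$, $\mbox{deg}\,L_1 = \alpha_1+\alpha_2$, $\mbox{deg}\,L_2 = \beta_1+\beta_2$. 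Plugging these into the four cases of Proposition~\ref{prop:idsyz}, the four summand descriptions $S(-\mbox{deg}\,L_\tau - \mbox{deg}\,L_j) \oplus S(-\mbox{deg}\,L_i)$ and $\syz(K(\tau))$ become exactly the four shifted summands listed in the lemma.

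There is no real obstacle; the only potentially delicate point is to confirm that none of the $2$-faces of $\sigma_1$ or $\sigma_2$ other than $\tau, e_{13}, e_{23}, e_{14}, e_{24}$ carry a nontrivial smoothness parameter. This is precisely the content of the standing convention that $\alpha(\gamma) = -1$ whenever $\mathbf{0} \notin \mbox{aff}(\gamma)$, so the fourth boundary $2$-face of each tetrahedron (opposite the apex $\mathbf{0}$) contributes the trivial factor $1$ to $\Lambda(\sigma_i)$. Once this is noted, the lemma is immediate from Proposition~\ref{prop:idsyz}.
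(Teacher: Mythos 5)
Your proposal is correct and matches the paper exactly: the paper proves this lemma simply by declaring it a special case of Proposition~\ref{prop:idsyz}, and your identification of $L_\tau=l_\tau^{\alpha_\tau}$, $L_1=\Lambda(\sigma_1)/L_\tau=u_1^{\alpha_1}u_2^{\alpha_2}$, $L_2=\Lambda(\sigma_2)/L_\tau=w_1^{\beta_1}w_2^{\beta_2}$ (using the convention $\alpha(\gamma)=-1$ when $\mathbf{0}\notin\mbox{aff}(\gamma)$ to kill the fourth face of each tetrahedron) is precisely the intended specialization. The degree bookkeeping and case-matching are all correct.
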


\begin{proof}[Proof of Theorem~\ref{thm:simpedgereg}]

If $u^{\alpha_1}_1 u^{\alpha_2}_2\in\left\langle w^{\beta_1}_1w^{\beta_2}_2,l^{\alpha_\tau}_\tau \right\rangle$ or $w^{\beta_1}_1w^{\beta_2}_2\in\left\langle u^{\alpha_1}_1 u^{\alpha_2}_2,l^{\alpha_\tau}_\tau \right\rangle$ then $\reg(C^\alpha_\tau(\Delta))\le M(\tau)$ is clear from Lemma~\ref{lem:splineideal}.  If $l^{\alpha_\tau}_\tau\in\left\langle u^{\alpha_1}_1 u^{\alpha_2}_2,w^{\beta_1}_1w^{\beta_2}_2 \right\rangle$ then $\alpha_\tau\ge \alpha_1+\alpha_2$, $\alpha_\tau\ge \beta_1+\beta_2$, and $\reg(C^\alpha_\tau(\Delta))\le M(\tau)$ from Lemma~\ref{lem:splineideal}.  So we may assume $K(\tau)$ is minimally generated by the three given forms and $C^\alpha_\tau(\Delta)\cong \syz (K(\tau))$.  In this case $\reg(C^\alpha_\tau(\Delta))\le\reg (S/K(\tau))+2$ by two applications of Proposition~\ref{prop:regseq} (equality holds but we will not need this).  So it suffices to show that $\reg(S/K(\tau))\le M(\tau)-2$.

Four special cases are given by
\begin{enumerate}
\item $\alpha_1=\beta_1=0\implies K(\tau)=\langle l_\tau^{\alpha_\tau},u_2^{\alpha_2},w_2^{\beta_2} \rangle$
\item $\alpha_2=\beta_2=0\implies K(\tau)=\langle l_\tau^{\alpha_\tau},u_1^{\alpha_1},w_1^{\beta_1} \rangle$
\item $\alpha_1=\beta_2=0\implies K(\tau)=\langle l_\tau^{\alpha_\tau},u_2^{\alpha_2},w_1^{\beta_1} \rangle$
\item $\alpha_2=\beta_1=0\implies K(\tau)=\langle l_\tau^{\alpha_\tau},u_2^{\alpha_2},w_1^{\beta_1} \rangle$
\end{enumerate}
Since $K(\tau)$ is minimally generated by the three given forms, ~\cite[Theorem~2.7]{FatPoints} applies in cases (1) and (2).  For example, in case (1) we have
\[
\begin{array}{rl}
\reg (S/K(\tau))&=\left\lfloor\dfrac{\alpha_\tau+\alpha_2+\beta_2-3}{2}\right\rfloor\\
& \le \alpha_\tau+\alpha_2+\beta_2-2\\
& \le M(\tau)-2.
\end{array}
\]
A similar argument holds for case (2).  In cases (3) and (4), $K(\tau)$ is a complete intersection of its generators and $\reg (K(\tau))\le M(\tau)-2$ follows from the Koszul resolution.

If at most one of $\alpha_1,\alpha_2,\beta_1,\beta_2$ vanishes we show $\reg (S/K(\tau))\le M(\tau)-2$ by fitting $S/K(\tau)$ into exact sequences and using Proposition~\ref{prop:regseq}.  Let $Q=\left\langle l_\tau^{\alpha_\tau},u_1^{\alpha_1}u_2^{\alpha_2}\right\rangle$.  We have the short exact sequence
\begin{eqnarray}\label{eqn:ssbeta}
0\rightarrow \dfrac{S(-\beta_1-\beta_2)}{Q:(w^{\beta_1}_1w^{\beta_2}_2)}\xrightarrow{\cdot w^{\beta_1}_1w^{\beta_2}_2} \dfrac{S}{Q} \rightarrow \dfrac{S}{K(\tau)} \rightarrow 0
\end{eqnarray}
$Q$ is a complete intersection with $2$ generators in degrees $\alpha_\tau$ and $\alpha_1+\alpha_2$, so
\[
\reg (S/Q)=\alpha_\tau+\alpha_1+\alpha_2-2
\]
The ideal $Q$ decomposes as $Q=\langle l_\tau^{\alpha_\tau},u_1^{\alpha_1}\rangle\cap \langle l_\tau^{\alpha_\tau},u_2^{\alpha_2}\rangle$.  Then 
\[
Q:(w_1^{\beta_1}w_2^{\beta_2})=I_1\cap I_2,
\]
where
\[
\begin{array}{rl}
I_1=& \langle l^{\alpha_\tau}_\tau,u_1^{\alpha_1}\rangle:(w^{\beta_1}_1w^{\beta_2}_2)\\
I_2=& \langle l^{\alpha_\tau}_\tau,u_2^{\alpha_2}\rangle:(w^{\beta_1}_1w^{\beta_2}_2)
\end{array}
\]
Since $(l^{\alpha_\tau}_\tau,u_1^{\beta_1})$ is $(l_\tau,u_1)$-primary and $w_2\notin (l_\tau,u_1)$, 
\[
I_1=\langle l^{\alpha_\tau}_\tau,u_1^{\alpha_1}\rangle: (w^{\beta_1}_1w^{\beta_2}_2)=\langle l^{\alpha_\tau}_\tau,u_1^{\alpha_1}\rangle:w_1^{\beta_1}.
\]
Similarly,
\[
I_2=\langle l^{\alpha_\tau}_\tau,u_2^{\alpha_2}\rangle:(w^{\beta_1}_1w^{\beta_2}_2)=\langle l_\tau^{\alpha_\tau},u_2^{\alpha_2}\rangle:w_2^{\beta_2}.
\]
From Proposition~\ref{prop:DescribeI} below, if $I_1\neq S$ and $I_2\neq S$ then $I_1,I_2$ are complete intersections and
\[
\begin{array}{rl}
\reg (S/I_1) & \le \alpha_\tau+\alpha_1-\beta_1-2 \\
\reg (S/I_2) & \le \alpha_\tau+\alpha_2-\beta_2-2
\end{array}
\]
We consider four final special cases before moving on to the general case.
\begin{description}
\item[A] $w^{\beta_1}_1\in \langle l^{\alpha_\tau}_\tau,u_1^{\alpha_1}\rangle\implies I_1=S \implies Q:(w^{\beta_1}_1w^{\beta_2}_2)=I_2$
\item[B] $w^{\beta_2}_2\in \langle l^{\alpha_\tau}_\tau,u_2^{\alpha_2}\rangle\implies I_2=S \implies Q:(w^{\beta_1}_1w^{\beta_2}_2)=I_1$
\end{description}
Note that $\alpha_1=0$ falls under \textbf{A} and $\alpha_2=0$ falls under \textbf{B}.  By the exact sequence ~\eqref{eqn:ssbeta} and Proposition~\ref{prop:regseq} we have the corresponding bounds
\begin{description}
\item[A] $\reg (S/K(\tau))\le\max\{\alpha_\tau+\alpha_2+\beta_1-3,\alpha_\tau+\alpha_1+\alpha_2-2\}\le M(\tau)-2$
\item[B] $\reg (S/K(\tau))\le\max\{\alpha_\tau+\alpha_1+\beta_2-3,\alpha_\tau+\alpha_1+\alpha_2-2\}\le M(\tau)-2$
\end{description}
If we use multiplication by $u^{\alpha_1}_1u^{\alpha_2}_2$ in the exact sequence ~\eqref{eqn:ssbeta} then we have the corresponding ideals $Q'=\langle l^{\alpha_\tau}_\tau, u_1^{\alpha_1}u_2^{\alpha_2} \rangle$,$I'_1=\langle l^{\alpha_\tau}_\tau,w_1^{\beta_1}\rangle:u_1^{\alpha_1}$ and $I'_2=\langle l^{\alpha_\tau}_\tau,w_2^{\beta_2}\rangle:u_2^{\alpha_2}$.  We then have the analogous cases
\begin{description}
\item[C] $u^{\alpha_1}_1\in \langle l^{\alpha_\tau}_\tau,w_1^{\beta_1}\rangle\implies I'_1=S \implies Q':(u^{\alpha_1}_1u^{\alpha_2}_2)=I'_2$
\item[D] $u^{\alpha_2}_2\in \langle l^{\alpha_\tau}_\tau,w_2^{\beta_2}\rangle\implies I'_2=S \implies Q':(u^{\alpha_1}_1u^{\alpha_2}_2)=I'_1$
\end{description}
Note that $\beta_1=0$ falls under \textbf{C} and $\beta_2=0$ falls under \textbf{D}. The corresponding bounds are
\begin{description}
\item[C] $\reg (S/K(\tau))\le\max\{\alpha_\tau+\beta_2+\alpha_1-3,\alpha_\tau+\beta_1+\beta_2-2\}\le M(\tau)-2$
\item[D] $\reg (S/K(\tau))\le\max\{\alpha_\tau+\beta_1+\alpha_2-3,\alpha_\tau+\beta_1+\beta_2-2\}\le M(\tau)-2$.
\end{description}

We have reduced to the case where

\begin{itemize}
\item $w^{\beta_i}_i\notin \langle l^{\alpha_\tau}_\tau,u_i^{\alpha_i}\rangle$ (equivalently $\alpha_\tau+\alpha_i-\beta_i\ge 2$) for $i=1,2$
\item $u^{\alpha_i}_i\notin \langle l^{\alpha_\tau}_\tau,w_i^{\beta_i}\rangle$ (equivalently $\alpha_\tau+\beta_i-\alpha_i\ge 2$) for $i=1,2$
\item $\alpha_i\ge 1,\beta_i\ge 1$ for $i=1,2$ and $\alpha_\tau\ge 1$.
\end{itemize}

In particular, $u_1\neq w_1$ implies that the vectors $v_1,v_3,v_4$ are linearly independent and $u_2\neq w_2$ implies $v_2,v_3,v_4$ are linearly independent in Figure~\ref{fig:EStar}.  It follows that we may make a change of coordinates so that $v_1$ points along the $y$-axis, $v_2$ points along the $x$-axis, and $v_3$ points along the $z$-axis.  Applying appropriate scaling in the $x$, $y$, and positive $z$ directions, we can assume that the vector defined by $v_4$ points in the direction of $\langle 1,1,-1\rangle$.  Under this change of coordinates, $\mbox{st}(\tau)$ has four possible configurations, shown in Figure~\ref{fig:starconfigs}.  The ideal $K(\tau)$ is the same for all of these.  We have
\[
\begin{array}{rl}
l_\tau=&z\\
u_1=&x\\
u_2=&y\\
w_1=&x+z\\
w_2=&y+z
\end{array}
\]
and
\[
\begin{array}{rl}
I_1= & \langle l^{\alpha_\tau}_\tau,u_1^{\alpha_1}\rangle:w_1^{\beta_1}= \langle x^{\alpha_1},z^{\alpha_\tau}\rangle:(x+z)^{\beta_1}\\
I_2= & \langle l^{\alpha_\tau}_\tau,u_2^{\alpha_2}\rangle:w_2^{\beta_2}= \langle y^{\alpha_2},z^{\alpha_\tau}\rangle:(y+z)^{\beta_2}
\end{array}
\]
By Corollary~\ref{cor:initial3} in the next section, $\reg (S/Q)=\reg (S/(I_1\cap I_2))\le M(\tau)-\beta_1-\beta_2-1$.  By the exact sequence~\eqref{eqn:ssbeta} and Lemma~\ref{prop:regseq}, the proof is complete.
\end{proof}

\begin{figure}[htp]
\centering
%4 possible configurations for C_\tau(\Delta)

\begin{tikzpicture}[scale=.65,x={(-1.414 cm, -1.414 cm)},y={(2 cm,0 cm)},z={(0 cm, 2 cm)},>=stealth]

\tikzstyle{gry}=[fill=gray!50!white,line width=1 pt,join=round]
\tikzstyle{gryo}=[fill=gray,thick,join=round]

\draw[<->] (-1.2,0,0)--node[at end, below]{$x$}(1.2,0,0);
\draw[<->] (0,-1.2,0)--node[at end, below]{$y$}(0,1.2,0);
\draw[<->] (0,0,-1)--node[at end, right]{$z$}(0,0,1.2);

\filldraw[gry] (0,1,0)--(0,0,0)--(0,0,1)--cycle;
\filldraw[gry] (1,0,0)--(0,0,0)--(0,0,1)--cycle;
\filldraw[gry] (1,0,0)--(0,0,0)--(1,1,-1)--cycle;
\filldraw[gry] (0,0,0)--(0,1,0)--(1,1,-1)--cycle;
\filldraw[gryo] (1,0,0)--(0,0,0)--(0,1,0)--cycle;

\draw (.33,.33,0) node{$\tau$};

\end{tikzpicture}
\begin{tikzpicture}[scale=.65,x={(-1.414 cm, -1.414 cm)},y={(2 cm,0 cm)},z={(0 cm, 2 cm)},>=stealth]

\tikzstyle{gry}=[fill=gray!50!white,line width=1 pt,join=round]
\tikzstyle{gryo}=[fill=gray,thick,join=round]

\filldraw[gry] (0,0,0)--(0,-1,0)--(1,1,-1)--cycle;
\filldraw[gryo] (1,0,0)--(0,0,0)--(0,-1,0)--cycle;
\draw (.33,-.38,0) node{$\tau$};

\draw[<->] (-1.2,0,0)--node[at end, below]{$x$}(1.2,0,0);
\draw[<->] (0,-1.2,0)--node[at end, below]{$y$}(0,1.2,0);
\draw[<->] (0,0,-1)--node[at end, right]{$z$}(0,0,1.2);

\filldraw[gry] (0,-1,0)--(0,0,0)--(0,0,1)--cycle;
\filldraw[gry] (1,0,0)--(0,0,0)--(0,0,1)--cycle;

\filldraw[gry] (1,0,0)--(0,0,0)--(1,1,-1)--cycle;

\end{tikzpicture}

\begin{tikzpicture}[scale=.65,x={(-1.414 cm, -1.414 cm)},y={(2 cm,0 cm)},z={(0 cm, 2 cm)},>=stealth]
\tikzstyle{gry}=[fill=gray!50!white,line width=1 pt,join=round]
\tikzstyle{gryo}=[fill=gray,thick,join=round]

\filldraw[gryo] (-1,0,0)--(0,0,0)--(0,-1,0)--cycle;
\draw (-.33,-.33,0) node{$\tau$};

\draw[<->] (0,0,-1)--node[at end, right]{$z$}(0,0,1.2);

\filldraw[gry] (0,-1,0)--(0,0,0)--(0,0,1)--cycle;
\filldraw[gry] (-1,0,0)--(0,0,0)--(0,0,1)--cycle;

\filldraw[gry] (0,0,0)--(0,-1,0)--(1,1,-1)--cycle;
\filldraw[gry] (-1,0,0)--(0,0,0)--(1,1,-1)--cycle;

\draw[<->] (-1.2,0,0)--node[at end, below]{$x$}(1.2,0,0);
\draw[<->] (0,-1.2,0)--node[at end, below]{$y$}(0,1.2,0);

\end{tikzpicture}
\begin{tikzpicture}[scale=.65,x={(-1.414 cm, -1.414 cm)},y={(2 cm,0 cm)},z={(0 cm, 2 cm)},>=stealth]

\tikzstyle{gry}=[fill=gray!50!white,line width=1 pt,join=round]
\tikzstyle{gryo}=[fill=gray,thick,join=round]

\filldraw[gry] (-1,0,0)--(0,0,0)--(1,1,-1)--cycle;
\filldraw[gryo] (-1,0,0)--(0,0,0)--(0,1,0)--cycle;
\draw (-.65,.23,0) node{$\tau$};

\draw[<->] (-1.2,0,0)--node[at end, below]{$x$}(1.2,0,0);
\draw[<->] (0,-1.2,0)--node[at end, below]{$y$}(0,1.2,0);
\draw[<->] (0,0,-1)--node[at end, right]{$z$}(0,0,1.2);

\filldraw[gry] (-1,0,0)--(0,0,0)--(0,0,1)--cycle;
\filldraw[gry] (0,0,0)--(0,1,0)--(1,1,-1)--cycle;
\filldraw[gry] (0,1,0)--(0,0,0)--(0,0,1)--cycle;
\end{tikzpicture}
\caption{Possible configurations for generic $\mbox{st}(\tau)$}\label{fig:starconfigs}
\end{figure}

\subsection{Intersection of colon ideals}
Let
\[
\begin{array}{rl}
I_1= & \langle x^{\alpha_1},z^{\alpha_\tau}\rangle:(x+z)^{\beta_1}\\
I_2= & \langle y^{\alpha_2},z^{\alpha_\tau}\rangle:(y+z)^{\beta_2}
\end{array}
\]
In~\cite{Stefan}, Tohaneanu and Minac compute the Hilbert function of the ideal (up to change of coordinates)
\[
\langle x^{r+1},(x+z)^{r+1}\rangle:z^{r+1} \cap \langle y^{r+1},(y+z)^{r+1}\rangle:z^{r+1}.
\]
It is not so obvious how to apply their methods directly to the ideal $I_1\cap I_2$.  Building on their work, however, we show how to construct enough of the initial ideal (with respect to the lexicographic order) of $I_1+I_2$ to give a fairly tight bound on the socle degree of $S/I_1+I_2$.  The methods synthesize descriptions of such ideals in terms of linear and commutative algebra.

We first compute the initial ideal of
\[
I=I(p,q,r)=\langle s^p,t^q \rangle:(s+t)^r
\]
in the ring $R=k[s,t]$ with standard lexicographic order.  We assume $I\neq R$, so $(s+t)^r\notin \langle s^p,t^q \rangle$.  This is equivalent to requiring $p+q-r\ge 2$.

\begin{prop}\label{prop:DescribeI}
Let $I=I(p,q,r)\subset R$ be as above, with $p+q-r\ge 2$.  Then $I$ is a complete intersection generated by two polynomials of
\begin{enumerate}
\item degrees $a=\min\{p,q-r\},b=\max\{p,q-r\}$ if $p+r-q\le 1$.
\item degrees $a=\min\{q,p-r\},b=\max\{q,p-r\}$ if $q+r-p\le 1$.
\item degrees
\[
a=\left\lfloor \dfrac{p+q-r}{2} \right\rfloor, b=\left\lceil \dfrac{p+q-r}{2} \right\rceil
\]
if $p+r-q\ge 2$ and $q+r-p\ge 2$.
\end{enumerate}
\end{prop}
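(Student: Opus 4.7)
The plan is to first establish that $R/I$ is Gorenstein Artinian of codimension $2$, then invoke the classical structure theorem to conclude $I$ is a complete intersection, and finally determine the two generator degrees by case analysis. For the Gorensteinness, observe that $J = (s^p, t^q)$ is a complete intersection in $R = k[s,t]$, so $A := R/J$ is Gorenstein Artinian with socle degree $p+q-2$. The hypothesis $p+q-r \geq 2$ guarantees $f := (s+t)^r \notin J$, so multiplication by $f$ yields a nonzero graded ideal $fA \subseteq A$ together with a graded $R$-module isomorphism $R/I \xrightarrow{\sim} fA(r)$. Every nonzero graded ideal of the Gorenstein Artinian ring $A$ contains the one-dimensional socle $A_{p+q-2}$, so $\mbox{soc}(fA) = A_{p+q-2}$, and hence $R/I$ is Gorenstein Artinian with socle degree $p+q-r-2$. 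In particular $I$ has codimension $2$.

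With Gorensteinness in hand, I would apply the classical structure result (a codimension-two Gorenstein ideal in the regular ring $k[s,t]$ is a complete intersection, via Hilbert--Burch together with a count showing the dual resolution of $R/I$ produces a cyclic canonical module only when the number of generators is $2$) to conclude $I = (f_1, f_2)$ with $\deg f_i = a_i$, $a_1 \leq a_2$; the socle-degree identity then forces $a_1 + a_2 = p+q-r$.

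To separate $a_1$ and $a_2$, I would argue by case. In cases (1) and (2), a coprimality argument suffices: if $g \in R_d$ satisfies $g(s+t)^r = u s^p + v t^q$, then degree counting together with the pairwise coprimality of $(s+t)^r$ with $s^p$ and with $t^q$ forces $g = 0$ for $d$ strictly below the claimed bound; combined with the trivial membership $s^p \in I$ (resp.\ $t^q \in I$) when the bound equals $p$ (resp.\ $q$), this pins down $a_1$, and $a_2$ follows from $a_1 + a_2 = p+q-r$. In case (3), both $p+r-q \geq 2$ and $q+r-p \geq 2$, so this coprimality approach is not sharp (neither $s^p$ nor $t^q$ has small enough degree to serve as the smaller generator). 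Here I would use the short exact sequence
\[
0 \longrightarrow (R/I)(-r) \xrightarrow{\cdot (s+t)^r} R/J \longrightarrow R/(J + (f)) \longrightarrow 0
\]
to extract the Hilbert function of $R/I$ from that of $R/J$ (known explicitly) and of $R/(s^p, t^q, (s+t)^r)$ (a classical fat-point-style calculation for three powers of distinct linear forms in two variables). The Gorenstein-forced symmetry of $HF(R/I)$ about degree $(p+q-r-2)/2$ then yields the balanced splitting $a_1 = \lfloor (p+q-r)/2 \rfloor$, $a_2 = \lceil (p+q-r)/2 \rceil$.

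The main obstacle is the case-(3) Hilbert function computation for three powers of distinct linear forms in $R=k[s,t]$. This is in the spirit of the Tohaneanu--Minac work the paper builds on, where a similar computation is carried out in a symmetric setting; the asymmetric exponents here add bookkeeping but no essentially new phenomena. The remaining ingredients (Gorensteinness, complete-intersection structure, and the sum $a_1 + a_2$) are largely formal once the setup is in place.
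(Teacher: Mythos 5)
Your proposal is correct, but it reaches the conclusion by a genuinely different route than the paper. The paper never invokes Gorensteinness: in cases (1) and (2) it writes $t^q=fs^p+g(s+t)^r$ explicitly and observes that $I=\langle s^p,g\rangle$ with $g$ of degree $q-r$ coprime to $s^p$; in case (3) it quotes the Hilbert--Burch resolution of $T=\langle s^p,t^q,(s+t)^r\rangle$ from Geramita--Schenck \cite[Theorem~2.7]{FatPoints} and reads off $I=(C,F)$ as the ideal generated by the last-row entries of the syzygy matrix, checking coprimality of $C$ and $F$ by hand. You instead get the complete-intersection structure abstractly, via linkage: $R/I\cong fA(r)$ with $A=R/(s^p,t^q)$ Gorenstein Artinian, the socle of $A$ is contained in every nonzero ideal, so $R/I$ is Gorenstein Artinian of socle degree $p+q-r-2$, and codimension-two Gorenstein ideals in $k[s,t]$ are complete intersections with $a_1+a_2=p+q-r$. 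This is sound, and it reduces the whole problem to locating the initial degree of $I$, which your coprimality argument handles in cases (1)--(2) (note that when $q-r=p-1$ in case (1) you also need the inequality $a_1\le a_2$ together with $a_1+a_2=2p-1$ to pin down $a_1=p-1$, since $s^p$ is then the \emph{larger} generator; this is a one-line fix). In case (3) both arguments ultimately rest on the same external input --- the resolution, equivalently the Hilbert function, of an ideal of powers of three distinct linear forms in two variables --- so neither approach is computation-free there; be aware that the ``Gorenstein symmetry'' of $HF(R/I)$ alone does not force the balanced splitting (a complete intersection of degrees $1$ and $p+q-r-1$ is also symmetric), so you genuinely need the initial degree from the Hilbert function computation, as you indicate. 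What your route buys is conceptual economy (no explicit matrix entries) at the price of the codimension-two Gorenstein structure theorem; what the paper's route buys is explicit generators, which make the coprimality and degree claims immediate.
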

\begin{proof}
\textbf{$p+r-q\le 1:$} In this case $t^q\in\left\langle s^p, (s+t)^r \right\rangle$.  Let
\[
t^q=fs^p+g(s+t)^r
\]
for some polynomials $f,g\in R$, where $g$ has no term divisible by $s^p$.  It is immediate that
\[
\langle s^p,t^q \rangle=\langle s^p,g(s+t)^r \rangle
\]
and
\[
I=\langle s^p,t^q \rangle:(s+t)^r=\langle s^p, g \rangle.
\]
The polynomial $g$ is not divisible by $s$ since it has a term which is a constant multiple of $t^{q-r}$.  It follows that $s^p$ ang $g$ are relatively prime and $I$ is a complete intersection.  Since $g$ has degree $q-r$, (1) is proved.

\textbf{$q+r-p\le 1:$} The argument is identical to the previous case.

\textbf{$p+r-q\ge 2$ and $q+r-p\ge 2$:}  Let
\[
T=T(p,q,r)=\langle s^p,t^q,(s+t)^r \rangle.
\]
Since we assume $p+q-r\ge 2$ as well, $T$ is minimally generated by the three given generators.  We describe $I$ in terms of the minimal free resolution of the ideal $T$.  Set $a=\left\lfloor\dfrac{p+q-r}{2}\right\rfloor$ and $b=\left\lceil\dfrac{p+q-r}{2}\right\rceil$.  The assumption $p+q-r\ge 2$ guarantees that $a\ge 1$.  $T$ is a codimension two Cohen-Macaulay ideal with Hilbert-Burch resolution of the form below~\cite[Theorem~2.7]{FatPoints}
\[
0\rightarrow R(-a-r)\oplus R(-b-r) \xrightarrow{\phi} R(-p)\oplus R(-q) \oplus R(-r) \rightarrow T
\]
where
\[
\phi=\left(
\begin{array}{cc}
A & D\\
B & E\\
C & F
\end{array}
\right)
\]
is a matrix of forms satisfying $BF-EC=s^p,AF-DC=t^q,BF-EC=(s+t)^r$.
It follows that the module of syzygies on $T$ has two generators, corresponding to the relations
\[
A s^p + B t^q + C (s+t)^r=0
\]
and
\[
D s^p + E t^q + F (s+t)^r=0.
\]
In terms of the entries of the matrix $\phi$ we may write
\[
I=(C,F)
\]
where $\mbox{deg}(C)=a,\mbox{deg}(F)=b$, and $a+b=p+q-r$.  Since $AF-DC=t^q$ and $BF-EC=(s+t)^r$, any common factor of $C$ and $F$ would give a common factor of $t$ and $(s+t)$, so $C$ and $F$ are relatively prime.  So $I$ is a complete intersection of the required degrees.
\end{proof}

As an immediate corollary we have the following lemma.
\begin{cor}\label{cor:hilbert}
With $I=I(p,q,r)$ as above, minimally generated by two forms of degree $a\le b$, we have
\[
HF(I,d)=\binom{d+1-a}{1}+\binom{d+1-b}{1}-\binom{d+1-a-b}{1}.
\]
\end{cor}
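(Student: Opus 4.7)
The plan is to recognize that Corollary~\ref{cor:hilbert} is a direct consequence of Proposition~\ref{prop:DescribeI}: in every case there, the ideal $I=I(p,q,r)$ is a complete intersection in the two-variable polynomial ring $R=k[s,t]$, generated by two relatively prime forms $f$ and $g$ of degrees $a$ and $b$. Since a complete intersection admits a Koszul resolution, I would write down the short exact sequence
\[
0 \longrightarrow R(-a-b) \xrightarrow{\;\begin{pmatrix} g \\ -f \end{pmatrix}\;} R(-a)\oplus R(-b) \xrightarrow{\;(f,\,g)\;} I \longrightarrow 0,
\]
whose exactness on the left uses exactly the coprimality of $f$ and $g$ already established in the proof of Proposition~\ref{prop:DescribeI}.

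Next I would take Hilbert functions, using the additivity $HF(I,d)=HF(R(-a),d)+HF(R(-b),d)-HF(R(-a-b),d)$ for each $d$. Since $R=k[s,t]$ is the polynomial ring in two variables, $HF(R,d)=d+1=\binom{d+1}{1}$ for $d\ge 0$ and is $0$ otherwise, so $HF(R(-c),d)=\binom{d+1-c}{1}$ under the convention that $\binom{m}{1}=m$ if $m\ge 0$ and $\binom{m}{1}=0$ if $m<0$. Substituting yields
\[
HF(I,d)=\binom{d+1-a}{1}+\binom{d+1-b}{1}-\binom{d+1-a-b}{1},
\]
which is the stated formula.

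There is really no obstacle here; the only point to watch is the sign/vanishing convention on $\binom{m}{1}$ when $d<a$, $d<b$, or $d<a+b$, which needs to be consistent so that small-degree components of $I$ (where $I_d=0$) come out to zero on the right hand side. This is automatic under the standard convention that negative binomial coefficients vanish, and since $a\le b\le a+b$, the three corrections turn on in the correct order as $d$ increases.
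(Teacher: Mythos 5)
Your proposal is correct and follows the same route as the paper: both deduce from Proposition~\ref{prop:DescribeI} that $I$ is a complete intersection of two coprime forms of degrees $a$ and $b$, write down the Koszul resolution $0\rightarrow R(-a-b)\rightarrow R(-a)\oplus R(-b)\rightarrow I\rightarrow 0$, and apply additivity of Hilbert functions. Your extra remarks on the vanishing convention for $\binom{m}{1}$ are a harmless elaboration of the same argument.
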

\begin{proof}
From Proposition~\ref{prop:DescribeI}, $I$ is a complete intersection of polynomials $C,F$ with $\mbox{deg}(C)=a,\mbox{deg}(F)=b$.  So $I$ has minimal resolution of the form
\[
0\rightarrow R(-a-b) \rightarrow R(-a)\oplus R(-b) \rightarrow I \rightarrow 0.
\]
The result follows from the additivity of Hilbert functions across exact sequences.
\end{proof}

Given a Hilbert function $H(I,d)$, let $L_d$ be the vector space spanned by the $H(I,d)$ greatest monomials of degree $d$ with respect to lex order.  Then the direct sum
\[
L=\bigoplus\limits_{d=0}^\infty L_d
\]
is an ideal, known as the \textit{lex-segment} ideal for the Hilbert function $H(d)$ \cite[Proposition 2.21]{Comb}.  Since two \textit{generic} forms in $\R[x,y]$ of degrees $a\le b$ form a complete intersection, the ideal they generate has the same Hilbert function as $I$.  Denote by $L(a,b)$ the corresponding lex-segment ideal.  We will show that $L(a,b)$ is the initial ideal of $I(p,q,r)$.

To prove this we will use a matrix condition on the coefficients of a form $f$ of degree $d$ which distinguishes when $f\in I$.  From Corollary~\ref{cor:hilbert}, $I_d=R_d$ for $d\ge a+b-1$.  Since $a+b=p+q-r$, this matrix condition we derive will be nontrivial for $1\le d < p+q-r-1$.  Suppose
\[
f=\sum\limits_{i+j=d}a_{i,j}s^it^j
\]
satisfies $f\in I_d$.  Then by definition we have
\[
f(s+t)^r\in (s^p,t^q)
\]
Since the ideal on the right is a monomial ideal, $f\in I\iff$ every monomial of $f(s+t)^r$ is divisible by either $s^p$ or $t^q$.  Expanding $(s+t)^r$ and multiplying by $f$ gives
\[
f(s+t)^r=\sum\limits_{i+j=d}\sum\limits_{m+n=r}\binom{r}{m}a_{ij}s^{m+i}t^{n+j}
\]
Setting $m+i=u$ and $n+j=v$ gives
\[
\sum\limits_{u+v=d+r}s^u t^v\left(\sum\limits_{m+i=u} \binom{r}{m}a_{ij}\right).
\]
$f\in I$ iff the only nonzero coefficients in this expression occur when $u\ge p$ or $v\ge q$.  Since $v=d+r-u$, $v\ge q$ is equivalent to $u\le d+r-q$.  So $f\in I$ iff for $u=d+r-q+1,\ldots,p-1$ we have the condition
\[
\sum\limits_{m+i=u} \binom{r}{m}a_{i,d-i}=0.
\]
Here we follow the convention that $\binom{A}{B}=0$ when $B<0$ or $B>A$.  These fit together into the following matrix condition on the coefficients of $f$:
\[
\begin{pmatrix}
\binom{r}{d+r-q+1} & \binom{r}{d+r-q} & \binom{r}{d+r-q-1} & \cdots & \binom{r}{r-q+1}\\
\binom{r}{d+r-q+2} & \binom{r}{d+r-q+1} & \binom{r}{d+r-q} & \cdots & \binom{r}{r-q+2}\\
\vdots & \vdots & \vdots & \ddots & \vdots \\
\binom{r}{p} & \binom{r+1}{p-1} & \binom{r+1}{p-4} & \cdots & \binom{r}{p-d-2}\\ 
\binom{r}{p-1} & \binom{r}{p-2} & \binom{r}{p-3} & \cdots & \binom{r}{p-d-1}
\end{pmatrix}
\cdot\begin{pmatrix}
a_{0,d}\\
a_{1,d-1}\\
\vdots\\
a_{d-1,1}\\
a_{d,0}
\end{pmatrix}
=0.
\]
Denote the $(p+q-r-d-1)\times (d+1)$ matrix on the left by $M(p,q,r,d)$.  $M(p,q,r,d)$ has entries
\[
M(p,q,r,d)_{i,j}=\binom{r}{d+r-q+1+i-j},
\]
where $i=0,\ldots,\min\{p-1,p+q-r-d-2\}$ and $j=0,\ldots,d$.  With this choice of indexing, column $c_j$ of $M(p,q,r,d)$ corresponds to the coefficient $a_{j,d-j}$.  The following lemma is fundamental for understanding $M(p,q,r,d)$.

\begin{lem}\label{lem:Schur}
Let $\mu=(\mu_0 \ge \ldots \mu_k \ge 1)$ be a partition with $k+1$ parts so that $r\ge \mu_0$.  Let $N(\mu)$ be the square matrix with entries
\[
N(\mu)_{ij}=\binom{r}{\mu_j+i-j}
\]
for $i=0,\ldots,k$, $j=0,\ldots,k$.  Then $N(\mu)$ has nonzero determinant.
\end{lem}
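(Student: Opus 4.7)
\medskip

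\noindent\textbf{Proof proposal.} The plan is to realize $N(\mu)$ as a path-counting matrix and apply the Lindstr\"om--Gessel--Viennot (LGV) lemma. Consider lattice paths in $\Z^2$ using unit east and north steps; the number of such paths from $(a,b)$ to $(c,d)$ is $\binom{(c-a)+(d-b)}{c-a}$. I would choose sources $A_i=(-i,i)$ and sinks $B_j=(\mu_j-j,\ r-\mu_j+j)$ for $i,j=0,\ldots,k$. A direct count of east and north steps shows that the number of paths from $A_i$ to $B_j$ equals
\[
\binom{(\mu_j-j-(-i))+((r-\mu_j+j)-i)}{\mu_j-j+i}=\binom{r}{\mu_j+i-j}=N(\mu)_{ij}.
\]
The hypothesis $r\ge\mu_0$ ensures the sinks are admissible (the path count is nontrivial at least on the diagonal $i=j$).

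Next I would verify the geometric hypothesis needed to pin down the sign in LGV. Because $A_{i+1}-A_i=(-1,1)$, the sources are strictly northwest-ordered along the antidiagonal. Because $\mu$ is weakly decreasing, $B_{j+1}-B_j=(\mu_{j+1}-\mu_j-1,\ \mu_j-\mu_{j+1}+1)$ has strictly negative $x$-component and strictly positive $y$-component, so the sinks are also strictly northwest-ordered. A standard planar argument for north-east paths then shows that any non-intersecting path system from $\{A_i\}$ to $\{B_j\}$ must match $A_i\to B_i$, i.e.\ correspond to the identity permutation. Applying LGV gives
\[
\det N(\mu)\;=\;\#\{\text{non-intersecting path systems with } A_i\to B_i\text{ for all }i\},
\]
which is a nonnegative integer.

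Finally, I would produce one explicit non-intersecting system to conclude $\det N(\mu)\ge 1>0$. The natural candidate is the $L$-shaped path $\gamma_i$ that goes east from $A_i=(-i,i)$ along the row $y=i$ until reaching $x=\mu_i-i$, then north to $B_i$; this is admissible because $\mu_i\ge 0$ and $\mu_i\le r$. To see that $\gamma_i$ and $\gamma_{i+1}$ share no lattice point, I would observe that the horizontal segments lie in distinct rows $y=i$ and $y=i+1$, and that the vertical segment of $\gamma_{i+1}$ sits at $x=\mu_{i+1}-i-1$, which by $\mu_i\ge\mu_{i+1}$ is strictly west of the vertical segment of $\gamma_i$ at $x=\mu_i-i$; a brief case check rules out any coincidence of a horizontal segment of one path with the vertical segment of the other.

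The main obstacle is the LGV sign/uniqueness step: one must argue carefully that no non-identity permutation admits a non-intersecting realization. Given the strict northwest ordering of both source and sink sequences established above, this is the standard ``two crossing paths can be uncrossed'' argument, so I expect it to be routine rather than genuinely difficult; the counting of paths and the explicit construction of $\gamma_i$ are straightforward verifications.
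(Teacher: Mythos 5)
Your proposal is correct, and it takes a genuinely different route from the paper. The paper's proof is essentially a citation: following Tohaneanu--Minac it identifies $\det N(\mu)$ with $s_\lambda(1,\ldots,1)$ for $\lambda=\mu'$, i.e.\ with the dimension of the Weyl module $\mathbb{S}_\lambda V$ for $\dim V=r$, and concludes nonvanishing because that module is a nontrivial irreducible representation of $SL(V)$. You instead give a self-contained combinatorial argument: your source/sink placement is right (the path from $A_i=(-i,i)$ to $B_j=(\mu_j-j,\,r-\mu_j+j)$ has $\mu_j+i-j$ east steps and $r-(\mu_j+i-j)$ north steps, and the count agrees with $\binom{r}{\mu_j+i-j}$ even in the degenerate cases where one of these is negative and both sides vanish); the crossing-forced condition for non-identity permutations is cleanest to see by noting that every $A_i$ lies on the antidiagonal $x+y=0$ and every $B_j$ on $x+y=r$, so two paths joining a southeast source to a northwest sink and vice versa must meet by an intermediate-value argument on the antidiagonals; and your explicit $L$-shaped system is genuinely non-intersecting since the vertical segment of $\gamma_{i'}$ for $i'>i$ sits at $x=\mu_{i'}-i'<\mu_i-i$ and at heights $y\ge i'>i$, while its horizontal segment ends strictly west of $\gamma_i$'s vertical segment. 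The hypotheses $\mu_k\ge 1$ and $r\ge\mu_0$ are exactly what make each $\gamma_i$ admissible. The two proofs are of course two faces of the same identity --- your non-intersecting path systems biject with the semistandard tableaux counted by $s_{\mu'}(1,\ldots,1)$ --- but yours has the advantage of being elementary and of exhibiting an explicit witness for positivity, whereas the paper's buys brevity at the cost of invoking representation theory as a black box.
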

\begin{proof}
This observation is made in~\cite[\S~3.1]{Stefan}, where it is noted that determinants of such matrices play a role in the representation theory of the special linear group $SL(V)$, where $V$ is an $r$-dimensional vector space.  In particular, if $\lambda=\mu'$, the conjugate partition to $\mu$, $\mbox{det }N(\mu)$ is the dimension of the Weyl module $\mathbb{S}_\lambda V$, which is a nontrivial irreducible representation of $SL(V)$.  More explicitly, $\mbox{det }N(\mu)=s_\lambda(1,\ldots,1)$, where $s_\lambda(x_1,\ldots,x_r)$ is the Schur polynomial in $r$ variables of the partition $\lambda=\mu'$.  In particular, $N(\mu)$ has nonzero determinant.  See~\cite[\S~6.1]{Rep} and ~\cite[Appendix~A.1]{Rep} for more details.
\end{proof}

\begin{cor}\label{cor:schurinv}
Let $M=M(p,q,r,d)$ be the $(p+q-r-d-1)\times (d+1)$ matrix defined as above, $M_{s,t}$ a \textit{nonzero} entry of $M$, and $k$ a nonnegative integer so that $s+k\le p+q-r-d-1$ and $t+k\le d+1$.  Then 
\begin{enumerate}
\item The $(k+1)\times(k+1)$ submatrix of $M$ formed by the entries $\{M_{i,j}| s\le i \le s+k,t\le j\le t+k\}$ is invertible.
\item The rank of $M$ is the minimum of the number of nonzero rows of $M$ and the number of nonzero columns of $M$.
\end{enumerate}
\end{cor}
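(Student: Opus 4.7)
The plan is to reduce both parts to Lemma~\ref{lem:Schur} via an index shift.

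For part~(1), I would set $c = d+r-q+1+s-t$ and change indices via $i' = i-s$, $j' = j-t$. The submatrix in question then has entries $\binom{r}{c+i'-j'}$ for $i', j' \in \{0, \ldots, k\}$, and since $M_{s,t} = \binom{r}{c}$ is nonzero we have $0 \le c \le r$. When $c \ge 1$, this submatrix coincides precisely with the matrix $N(\mu)$ of Lemma~\ref{lem:Schur} for the constant partition $\mu = (c, c, \ldots, c)$ of length $k+1$; the hypotheses $1 \le c \le r$ are satisfied, so $\det N(\mu) \ne 0$. When $c = 0$, the submatrix reduces to $\bigl(\binom{r}{i'-j'}\bigr)_{i',j'}$, which is lower triangular with $1$'s on the diagonal and hence invertible.

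For part~(2), the plan is to exhibit an explicit $m \times m$ invertible submatrix, where $m = \min(R,C)$ with $R$ the number of nonzero rows and $C$ the number of nonzero columns. Writing $a = d+r-q+1$, the entry $M_{i,j} = \binom{r}{a+i-j}$ is nonzero precisely when $a+i-j \in \{0, 1, \ldots, r\}$. A short computation shows that the nonzero rows form the initial segment $\{0, 1, \ldots, R-1\}$ and the nonzero columns form a consecutive set $\{t_0, t_0+1, \ldots, t_0+C-1\}$, where $t_0 = \max(0, d-q+1)$; moreover $M_{0,t_0}$ is always nonzero, equaling $\binom{r}{r}=1$ when $t_0 = d-q+1 > 0$, and $\binom{r}{a}$ with $a \le r$ when $t_0 = 0$. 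Applying part~(1) with $(s,t,k) = (0, t_0, m-1)$ then produces an invertible $m \times m$ submatrix fitting inside $M$, since $m-1 \le R-1$ and $t_0 + m - 1 \le t_0 + C - 1 \le d$; this forces $\mbox{rank}(M) \ge m$, with the reverse inequality being automatic.

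The only real subtlety is the bookkeeping in part~(2) verifying that the single starting corner $(0, t_0)$ produces an invertible block of maximum possible size in every case. The calculation splits on the sign of $d-q+1$, but in each case reduces to an elementary observation about when binomial coefficients vanish; the heavy lifting is all done by Lemma~\ref{lem:Schur}.
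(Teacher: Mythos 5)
Your part (1) is correct and takes the same route as the paper: after the index shift the block is exactly $N(\mu)$ from Lemma~\ref{lem:Schur} for the constant partition $\mu=(c,\ldots,c)$ with $c=d+r-q+1+s-t$, and your separate treatment of $c=0$ (lower triangular with $1$'s on the diagonal) covers a degenerate case that the hypothesis $\mu_k\ge 1$ of that lemma excludes, so this part is if anything more complete than the paper's.

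Part (2) has a genuine gap. Write $a=d+r-q+1$, so $M_{i,j}=\binom{r}{a+i-j}$. Row $i$ is nonzero exactly when $a+i\ge 0$ \emph{and} $a+i-r\le d$; the first condition means that whenever $a<0$, i.e.\ $q>d+r+1$, the first $q-d-r-1$ rows of $M$ are identically zero. This occurs for legitimate inputs: with $p=q=10$, $r=2$, $d=1$ (so $1\le d<p+q-r-1$) the matrix is $16\times 2$ and its nonzero rows are $i=6,\ldots,9$. In that situation both of your key assertions fail: the nonzero rows are not the initial segment $\{0,\ldots,R-1\}$, and $M_{0,t_0}=\binom{r}{a}$ with $a<0$ is zero, so part (1) cannot be invoked at the corner $(0,t_0)$. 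You split on the sign of $d-q+1$ but you also need to split on the sign of $d+r-q+1$. The repair is routine: the nonzero rows and columns each form a consecutive interval, beginning at $i_0=\max(0,\,q-d-r-1)$ and $t_0=\max(0,\,d-q+1)$ respectively, and a three-case check ($a<0$, $0\le a\le r$, $a>r$) shows that $M_{i_0,t_0}=\binom{r}{a+i_0-t_0}$ is always nonzero (it equals $\binom{r}{0}$, $\binom{r}{a}$, or $\binom{r}{r}$); anchoring the $m\times m$ block from part (1) at $(i_0,t_0)$ then yields $\mbox{rank}(M)\ge m$ as you intended. (The paper's own argument anchors at the first nonzero entry of the first column and implicitly relies on the same interval structure; the essential point in either version is that the top-left corner of the rectangle of nonzero rows and columns is itself a nonzero entry.)
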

\begin{proof}
(1) The submatrix of $M=M(p,q,r,d)$ above has entries
\[
\binom{r}{d+r-q+1+s-t+i-j}
\]
for $i=0,\ldots,k$,$j=0,\ldots,k$.  Since we assume $M_{s,t}\neq 0$, $d+r-q+1+s-t\le r$ and the first statement follows from Lemma~\ref{lem:Schur} by taking $\mu_0=\cdots=\mu_k=d+r-q+1+s-t$.  

(2) Observe that either $M_{0,0}\neq 0$ or, if $M_{0,0}=0$, then the first entry $M_{j,0}$ ($j>0$) which is nonzero is equal to $1$.  The last entry in the first column is $\binom{r}{p-1}\ge 1$ (we assumed $p\ge 1$), so there is at least one nonzero entry in the first column of $M$.  If the row of $M$ with index $i$ is nonzero, every row with index $\ge i$ is also nonzero.  If the column of $M$ with index $j$ is zero, every column with index $\ge j$ is also zero.  Now the second statement follows by taking the largest square submatrix of $M$ whose upper left corner is the first nonzero entry of the first column of $M$.  This is a $k\times k$ submatrix of $M$ where $k$ is the minimum of the number of nonzero rows of $M$ and the number of nonzero columns of $M$. By the first statement, this submatrix is invertible, and from the earlier observations $k$ must be the rank of $M$.
\end{proof}

\begin{lem}\label{lem:initial}
The initial ideal of $I=I(p,q,r)$ is the lex-segment ideal $L(a,b)$, where $a\le b$ are the degrees of the generators of $I$.
\end{lem}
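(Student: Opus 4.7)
\medskip

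\noindent\textbf{Plan.}
The strategy is to translate the claim into a rank statement about the matrix $M(p,q,r,d)$ and then invoke the invertibility result from Corollary~\ref{cor:schurinv}. Recall that, under the fixed indexing, a form $f=\sum_{j=0}^d a_{j,d-j}\,s^{j}t^{d-j}$ of degree $d$ lies in $I_d$ if and only if its coefficient vector $(a_{0,d},\ldots,a_{d,0})$ lies in $\ker M(p,q,r,d)$, and column $c_j$ of $M$ is paired with the monomial $s^{j}t^{d-j}$. In lex order with $s>t$, higher column indices correspond to lex-greater monomials.

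Set $\rho:=\mathrm{rank}\,M(p,q,r,d)=(d+1)-HF(I,d)$, where the second equality uses Corollary~\ref{cor:hilbert}. Since $\mathrm{in}(I)_d$ and $L(a,b)_d$ both have dimension $HF(I,d)$, it suffices to prove one inclusion. The inclusion $L(a,b)_d\subseteq\mathrm{in}(I)_d$ is equivalent to: for each $j\in\{\rho,\rho+1,\ldots,d\}$ there exists $f\in I_d$ whose leading monomial in lex order is $s^{j}t^{d-j}$. Such an $f$ exists exactly when column $c_j$ of $M$ lies in the span of $c_0,\ldots,c_{j-1}$. Consequently, the entire claim reduces to showing that the leftmost $\rho$ columns $c_0,\ldots,c_{\rho-1}$ of $M(p,q,r,d)$ are linearly independent; once this is known, these columns span the column space of $M$, and every subsequent column $c_j$ with $j\ge\rho$ automatically lies in their span.

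To establish independence of the first $\rho$ columns I would exhibit an invertible $\rho\times\rho$ submatrix lying within those columns and invoke Corollary~\ref{cor:schurinv}(1), which guarantees that any square submatrix of $M$ whose top-left corner is a nonzero entry is invertible. Concretely, one looks for an index $s$ such that $M_{s,0}=\binom{r}{d+r-q+1+s}$ is nonzero and $s+\rho-1$ is at most the last row index $\min\{p-1,\,p+q-r-d-2\}$ of $M$; the block $\{M_{s+i,\,j}:0\le i,j\le\rho-1\}$ then fits inside $M$, starts at a nonzero entry, and is therefore invertible. Existence of such an $s$ follows by comparing $\rho$ with the numbers of nonzero rows and nonzero columns of $M$, which are precisely matched by Corollary~\ref{cor:schurinv}(2). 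In the three trivial regimes ($d<a$, $d=a+b-1$ and larger, or one of the cases $(1),(2)$ of Proposition~\ref{prop:DescribeI} where $I$ already has an explicit generator $s^p$ or $t^q$), the claim can be read off directly from the generators.

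The main obstacle will be the combinatorial bookkeeping in the second step: one must track how $\rho$ varies with $d$ via Corollary~\ref{cor:hilbert} and verify in each of the nontrivial degree ranges $a\le d\le b-1$ and $b\le d\le a+b-2$ that the starting entry $M_{s,0}$ can be chosen so that the $\rho\times\rho$ block does not overflow the row count $\min\{p,\,p+q-r-d-1\}$. The non-vanishing pattern of the binomial entries $\binom{r}{d+r-q+1+i-j}$ (nonzero precisely when $0\le d+r-q+1+i-j\le r$) together with the equality $a+b=p+q-r$ and the symmetry between $(p,\alpha_\tau,\ldots)$ and $(q,\alpha_\tau,\ldots)$ should make this bookkeeping tractable but somewhat tedious.
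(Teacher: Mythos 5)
Your proposal is correct and follows essentially the same route as the paper: identify $I_d$ with $\ker M(p,q,r,d)$, use $HF(I,d)=d+1-\mathrm{rank}\,M$, reduce to linear independence of the first $\mathrm{rank}\,M$ columns via an invertible Schur-type submatrix from Corollary~\ref{cor:schurinv}, and read off the leading monomials of the resulting column relations. The ``combinatorial bookkeeping'' you anticipate in the final step is already contained in the proof of Corollary~\ref{cor:schurinv}(2): the invertible square block constructed there has its upper-left corner at the first nonzero entry of column $0$, so it automatically sits inside the first $\mathrm{rank}\,M$ columns and no further case analysis over the degree ranges is required.
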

\begin{proof}
For fixed degree $d$, let $\row$ be the rank of $M(p,q,r,d)$.  By definition,
\[
HF(I,d)=\dim\mbox{ker} M(p,q,r,d),
\]
hence $HF(I,d)=d+1-\row$.  From the submatrix constructed to prove part (2) of Corollary~\ref{cor:schurinv}, the first $\row$ columns of $M$ are linearly independent.  It follows that for any column $c_l$ of $M(p,q,r,d)$ with $\row-1\le l \le d$, there is a unique (up to scaling) relation
\[
\left(\sum\limits_{i=0}^{\row-1}a_{i,d-i}c_i\right) + a_{l,d-l}c_l=0,
\]
where $a_{l,d-l}\neq 0$.  This gives rise to the polynomial $f=\sum_{i=0}^{p+q-r-d-2} a_{i,j}s^it^{d-i}+a_{l,d-l}s^lt^{d-l}\in I$ with leading monomial $s^lt^{d-l}$.  These monomials are the largest $d+1-\row$ monomials of degree $d$ with respect to lex ordering, so the result follows.
\end{proof}

\begin{cor}\label{cor:initial1}
Let $I=(s^p,t^q):(s+t)^r$, generated in degrees $a$ and $b$, with $a\le b$.  The initial ideal of $I$ with respect to the standard lexicographic order is
\[
L(a,b)=(s^a,s^{a-1}t^{b-a+1},s^{a-2}t^{b-a+3},\ldots,s^{a-i}t^{b-a+2i-1},\ldots,t^{a+b-1}).
\]
\end{cor}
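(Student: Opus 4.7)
The plan is straightforward: by Lemma~\ref{lem:initial}, the initial ideal of $I = I(p,q,r)$ with respect to lex order is the lex-segment ideal $L(a,b)$ associated with the Hilbert function of $I$, which by Corollary~\ref{cor:hilbert} matches that of a complete intersection of degrees $a \le b$ in two variables. So Corollary~\ref{cor:initial1} reduces to a purely combinatorial computation: write down the minimal monomial generators of $L(a,b)$.

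The first step is to make the Hilbert function explicit. The complete intersection resolution underlying Corollary~\ref{cor:hilbert}, together with $\dim R_d = d+1$, gives $HF(I,d) = 0$ for $d < a$, $HF(I,d) = d - a + 1$ for $a \le d \le b-1$, $HF(I,d) = 2d - a - b + 2$ for $b \le d \le a + b - 1$, and $HF(I,d) = d + 1$ for $d \ge a + b$.

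The second step is to read off the lex-segment. In each degree $d$, the monomials in descending lex order are $s^d, s^{d-1}t, \ldots, t^d$, so taking the top $HF(I,d)$ of them gives $L(a,b)_d = \{s^u t^v : u + v = d,\ u \ge d - HF(I,d) + 1\}$. Substituting the Hilbert function, the threshold on $u$ simplifies to $u \ge a$ in degrees $a \le d \le b - 1$; to $u \ge a - i$ in degree $d = b + i - 1$ for $1 \le i \le a$; and to $u \ge 0$ for $d \ge a + b$.

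The third and final step is to identify the new minimal generators degree by degree. A monomial $m \in L(a,b)_d$ is a minimal generator precisely when $m \notin s \cdot L(a,b)_{d-1} \cup t \cdot L(a,b)_{d-1}$, i.e., when the threshold for $u$ strictly drops from degree $d - 1$ to degree $d$. By the threshold description above, this happens exactly at $d = a$, producing $s^a$, and at $d = b + i - 1$ for $1 \le i \le a$, producing the monomial $s^{a-i} t^{b-a+2i-1}$ that realizes the new threshold. Concatenating these gives the list in the statement. The main obstacle, namely the identification of the initial ideal with a lex-segment ideal via the Schur-polynomial rank computations for the matrices $M(p,q,r,d)$, has already been handled in Lemma~\ref{lem:initial}, so nothing of substance remains.
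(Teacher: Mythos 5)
Your proposal is correct and follows essentially the same route as the paper: invoke Lemma~\ref{lem:initial} to reduce to identifying the lex-segment ideal, write out the complete-intersection Hilbert function explicitly, and read off the monomial generators degree by degree (the paper phrases the last step as a divisibility check on each graded piece rather than as tracking drops in the $u$-threshold, but the computation is the same). No gaps.
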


\begin{proof}
By Lemma~\ref{lem:initial} it suffices to show that the lex-segment ideal $L(a,b)$ has the form above.  The Hilbert function of a complete intersection $I$ generated in degrees $a$ and $b$ is
\[
HF(I,d)=\binom{d+1-a}{1}+\binom{d+1-b}{1}-\binom{d+1-a-b}{1}.
\]
More explicitly, we have
\[
HF(I,d)=\left\lbrace
\begin{array}{ll}
0 & \mbox{for } 0\le d< a\\
d+1-a & \mbox{for } a\le d< b\\
2d+2-(a+b) & \mbox{for } b\le d \le a+b-1\\
d+1 & \mbox{for }  d> a+b-1
\end{array}
\right.
\]
Recall $L(a,b)_d$ is the vector space spanned by the $HF(I,d)$ greatest monomials of degree $d$ with respect to lex order.  If $a\le d <b$, then the $d+1-a$ greatest monomials are $s^d,\ldots,s^a$.  These are all divisible by $s^a$.  If $b\le d \le a+b-1$, the $2d+2-(a+b)$ greatest monomials are $\{s^{d-i}t^i|i=0,\ldots,2d-(a+b)+1\}$.  If $i\le d-a$, $s^{d-i}t^i$ is divisible by $s^a$.  If $d-a\le i \le 2d+1-(a+b)$, then $s^{d-i}t^i=s^{a-j}t^{d-a+j}=s^{a-j}t^{b-a+(d-b+j)}$, where $j=1,\ldots,d-b+1$.  This is divisible by $s^{a-j}t^{b-a+2j-1}$, which proves the corollary.
\end{proof}

\begin{remark}
Conca and Valla~\cite{CanonicalHB} parametrize of all ideals in two variables with a given initial ideal.  Using this, one can show that the lex-segment ideal $L(a,b)$ is the initial ideal of any ideal generated by two \textit{generic} forms of degree $a$ and $b$.  Here \textit{generic} means there are certain polynomials in the coefficients of the forms that must not vanish (the condition is not equivalent to the two forms being relatively prime).  Lemma~\ref{lem:initial} can be viewed as a proof that the ideal $I$, which is generated by two forms, is generic in this sense.
\end{remark}

\begin{prop}\label{prop:initial2}
Set $R=k[x,y]$, $S=k[x,y,z]$ both with standard lexicographic orders.  For positive integers $a\le b$, $c\le d$, let $J_1,J_2\subset R=k[s,t]$ be ideals satisfying $\mbox{in}(J_1)=L(a,b)$ and $\mbox{in}(J_2)=L(c,d)$, respectively.
Let $S=k[x,y,z]$ and define ring maps $i_1,i_2:R\rightarrow S$ by $i_1(s)=x,i_1(t)=z$ and $i_2(s)=y,i_2(t)=z$.  Set $I_1=i_1(J_1)S$, $I_2=i_2(J_2)S$, and $N=\max\{a+d-1,b+c-1\}$.  Then
\[
(I_1+I_2)_N=S_N
\]
\end{prop}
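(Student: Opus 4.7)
The plan is to prove the equality by a Gr\"obner degeneration argument.  Equip $S=k[x,y,z]$ with the lexicographic term order with $x>y>z$; its restriction to $k[x,z]$ is the lex order with $x>z$, and its restriction to $k[y,z]$ is the lex order with $y>z$.  Under the identifications $(s,t)\leftrightarrow(x,z)$ for $J_1$ and $(s,t)\leftrightarrow(y,z)$ for $J_2$, the hypotheses $\mbox{in}(J_1)=L(a,b)$ and $\mbox{in}(J_2)=L(c,d)$ yield $\mbox{in}(I_1)=M_1$ and $\mbox{in}(I_2)=M_2$, where $M_1,M_2\subset S$ are the monomial ideals generated by $\{x^a,x^{a-1}z^{b-a+1},\ldots,z^{a+b-1}\}$ and $\{y^c,y^{c-1}z^{d-c+1},\ldots,z^{c+d-1}\}$ respectively.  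This uses that a Gr\"obner basis of $J_\ell$ with respect to lex $s>t$ maps under $i_\ell$ to a Gr\"obner basis of $i_\ell(J_\ell)$ inside the subring $k[x,z]$ or $k[y,z]$, and the compatibility of the restricted term orders ensures the Gr\"obner property persists after extension to $S$.  Since $\mbox{in}(I_1+I_2)\supseteq\mbox{in}(I_1)+\mbox{in}(I_2)=M_1+M_2$ and passing to the initial ideal preserves Hilbert functions, it is enough to show that every monomial $x^iy^jz^k$ of degree $N$ lies in $M_1+M_2$.

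For the monomial check, Corollary~\ref{cor:initial1} shows that $x^iz^k\in M_1$ iff $i\ge a$, or $0\le i<a$ and $k\ge a+b-1-2i$ (take $\ell=a-i$ among the listed generators $x^{a-\ell}z^{b-a+2\ell-1}$); symmetrically, $y^jz^k\in M_2$ iff $j\ge c$, or $0\le j<c$ and $k\ge c+d-1-2j$.  Fix $(i,j,k)$ with $i+j+k=N$ and assume for contradiction that both conditions fail.  Then $i<a$, $j<c$, $k\le a+b-2-2i$, and $k\le c+d-2-2j$.  Summing the last two inequalities produces $2(i+j+k)\le a+b+c+d-4$, equivalently $a+b+c+d\ge 2N+4$.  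If $N=a+d-1$ this forces $b+c\ge a+d+2$, which contradicts $b+c-1\le N=a+d-1$; the case $N=b+c-1$ yields the symmetric contradiction.  Hence one of the two divisibility conditions holds, so $x^iy^jz^k\in M_1+M_2$ as required.

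The main obstacle is the combinatorial inequality in the second step: the cutoff $N=\max\{a+d-1,b+c-1\}$ is exactly what obstructs the inequality $a+b+c+d\ge 2N+4$, and it is here that the sharpness of the statement lives.  The Gr\"obner portion of the argument is routine, though it does require one to commit to the single term order (lex with $z$ smallest) that simultaneously recovers the initial ideals of both $I_1$ and $I_2$; a careless choice would force separate degenerations and lose the additivity of initial ideals used above.
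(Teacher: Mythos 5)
Your proposal is correct and follows essentially the same route as the paper: pass to the initial ideals (which are the extensions of the lex-segment ideals $L(a,b)$ and $L(c,d)$ under $i_1,i_2$) and verify that every degree-$N$ monomial $x^iy^jz^k$ lies in $\mbox{in}(I_1)+\mbox{in}(I_2)$. The only difference is cosmetic: the paper organizes the monomial check as a case split on whether $a-i\le c-j$ or $c-j\le a-i$, while you sum the two failed divisibility inequalities and contradict the definition of $N$ --- the same elementary computation.
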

\begin{proof}
It suffices to show that $(\mbox{in}(I_1)+\mbox{in}(I_2))_N=S_N$.  We have
\[
\begin{array}{rl}
\mbox{in}(I_1)= & \langle x^a \rangle +\langle x^{a-i}z^{b-a+2i-1}|i=1,\ldots,a\rangle\\
\mbox{in}(I_2)= & \langle y^c \rangle +\langle y^{c-j}z^{d-c+2j-1}|j=1,\ldots,c\rangle
\end{array}
\]
Let $m=x^iy^jz^k$ be a monomial of $S$ with degree $N$.  We claim $m\in \mbox{in}(I_1)+\mbox{in}(I_2)$.  If $i\ge a$ or $j\ge c$ then $x^a|m$ or $y^c|m$ and we are done.  So set $i=a-s,j=c-t$, where $1\le s \le a,1\le t \le c$.  If $s\le t$ then $a+c-s-t+(b-a+2s-1)=b+c-1+s-t\le N$.  So $k=N-(a+c-s-t)\ge b-a+2s-1$ and $x^iy^jz^k\in \mbox{in}(I_1)$.  If $t\le s$ then $a+c-s-t+(d-c+2t-1)=a+d-s+t-1\le N$.  So $k=N-(a+c-s-t)\ge d-c+2t-1$ and $x^iy^jz^k\in \mbox{in}(I_2)$.
\end{proof}

\begin{cor}\label{cor:initial3}
Let
\[
\begin{array}{rl}
I_1= & \langle x^{\alpha_1},z^{\alpha_\tau}\rangle:(x+z)^{\beta_1}\\
I_2= & \langle y^{\alpha_2},z^{\alpha_\tau}\rangle:(y+z)^{\beta_2}
\end{array}
\]
where $\alpha_i+\alpha_\tau-\beta_i\ge 2$ and $\beta_i+\alpha_\tau-\alpha_i\ge 2$ for $i=1,2$.  Also assume $\alpha_i\ge 1,\beta_i\ge 1$ for $i=1,2$ and $\alpha_\tau\ge 1$.  Let 
\[
\begin{array}{rl}
M(\tau)= & \alpha_\tau+\max\{\alpha_1+\alpha_2,\alpha_1+\beta_1,\alpha_1+\beta_2,\alpha_2+\beta_1, \vphantom{\}} \\  &\vphantom{\{}\alpha_2+\beta_2,\beta_1+\beta_2,\alpha_\tau+\alpha_1,\alpha_\tau+\alpha_2,\alpha_\tau+\beta_1,\alpha_\tau+\beta_2\}
\end{array}
\]
as in the statement of Theorem~\ref{thm:simpedgereg}.  Then 
\[
\reg \left(\dfrac{S}{I_1\cap I_2}\right)\le M(\tau)-\beta_1-\beta_2-1
\]
\end{cor}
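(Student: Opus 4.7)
The plan is to leverage the Mayer--Vietoris-type short exact sequence
\[
0 \to S/(I_1 \cap I_2) \to S/I_1 \oplus S/I_2 \to S/(I_1+I_2) \to 0,
\]
which, via Proposition~\ref{prop:regseq}(1), gives
\[
\reg(S/(I_1 \cap I_2)) \le \max\{\reg(S/I_1),\ \reg(S/I_2),\ \reg(S/(I_1+I_2)) + 1\}.
\]
The remainder of the argument bounds each of the three terms on the right by $M(\tau) - \beta_1 - \beta_2 - 1$.

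For $\reg(S/I_i)$, I invoke Proposition~\ref{prop:DescribeI}: each $I_i$, regarded as an ideal of the two-variable subring $k[x,z]$ (respectively $k[y,z]$), is a complete intersection of two forms of degrees $a_i \le b_i$ satisfying $a_i + b_i = \alpha_i + \alpha_\tau - \beta_i$. The corresponding Koszul resolution, preserved by the flat extension to $S$, yields $\reg(S/I_i) = \alpha_i + \alpha_\tau - \beta_i - 2$, which is comfortably bounded by $M(\tau) - \beta_1 - \beta_2 - 1$ on selecting the pair $\alpha_i + \beta_{3-i}$ in the maximum defining $M(\tau)$.

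For $\reg(S/(I_1+I_2))$, the key observation is that $I_1 + I_2$ contains the Artinian monomial ideal $(x^{\alpha_1}, y^{\alpha_2}, z^{\alpha_\tau})$, so $S/(I_1+I_2)$ is zero-dimensional and its regularity equals its top nonzero degree. By Lemma~\ref{lem:initial}, $\mbox{in}(I_i) = L(a_i, b_i)$, so Proposition~\ref{prop:initial2} applies to produce $(I_1+I_2)_N = S_N$ for $N = \max\{a_1+d_2-1,\ b_1+c_2-1\}$, where $c_2 \le d_2$ are the generator degrees of $I_2$. This gives $\reg(S/(I_1+I_2)) \le N - 1$.

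The main obstacle is verifying $N \le M(\tau) - \beta_1 - \beta_2 - 1$, because Proposition~\ref{prop:DescribeI} admits both a balanced case (3) and an unbalanced case (1) for each $I_i$ (case (2) being excluded by the hypothesis $\beta_i + \alpha_\tau - \alpha_i \ge 2$). In the principal case (3,3), $|b_i - a_i| \le 1$ and a short parity check gives $\max\{a_1+d_2,\ b_1+c_2\} \le \lceil(X_1+X_2)/2\rceil$ with $X_i = \alpha_i + \alpha_\tau - \beta_i$; the conclusion then reduces to the elementary inequality $\max(m,n) \ge \lceil(m+n)/2\rceil$ applied to the pair sums $(m,n) = (\alpha_1+\alpha_2,\ \beta_1+\beta_2)$ present inside $M(\tau)$. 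The unbalanced configurations split further on whether $\alpha_i \ge \alpha_\tau - \beta_i$ or $\alpha_i < \alpha_\tau - \beta_i$; in each such subcase $a_i$ and $b_i$ are pinned to explicit values (either $\alpha_i$ or $\alpha_\tau - \beta_i$), and an appropriate pair in $M(\tau)$ (such as $\alpha_\tau + \alpha_i$, $\alpha_\tau + \beta_i$, or $\alpha_i + \beta_j$) absorbs the resulting estimate. The hypothesis $\beta_i + \alpha_\tau - \alpha_i \ge 2$ supplies the slack needed precisely when $\alpha_i < \alpha_\tau - \beta_i$.
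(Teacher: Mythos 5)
Your proposal is correct and follows essentially the same route as the paper's proof: the same Mayer--Vietoris sequence with Proposition~\ref{prop:regseq}, the same complete-intersection bound on $\reg(S/I_i)$ from Proposition~\ref{prop:DescribeI}, and the same reduction of $\reg(S/(I_1+I_2))$ to the estimate $\max\{a+d,b+c\}\le M(\tau)-\beta_1-\beta_2$ via Lemma~\ref{lem:initial} and Proposition~\ref{prop:initial2}, with the identical balanced/unbalanced case split. Your parity argument in the balanced case and the explicit observation that $I_1+I_2$ contains the Artinian ideal $\langle x^{\alpha_1},y^{\alpha_2},z^{\alpha_\tau}\rangle$ are minor (correct) variants of the paper's bookkeeping.
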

\begin{proof}
We use the short exact sequence
\[
0\rightarrow \dfrac{S}{I_1\cap I_2} \rightarrow \dfrac{S}{I_1}\oplus\dfrac{S}{I_2} \rightarrow \dfrac{S}{I_1+I_2} \rightarrow 0
\]
and Proposition~\ref{prop:regseq}.  From Proposition~\ref{prop:DescribeI}, $\reg (S/I_1)=\alpha_1+\alpha_\tau-\beta_1-2\le  M(\tau)-\beta_1-\beta_2-1$ and $\reg (S/I_2)=\alpha_2+\alpha_\tau-\beta_2-2\le  M(\tau)-\beta_1-\beta_2-1$.  We show $\reg (S/(I_1+I_2))\le  M(\tau)-\beta_1-\beta_2-2$; then we are done by Proposition~\ref{prop:regseq}.  Equivalently, we show $(I_1+I_2)_d=S_d$ for $d=M(\tau)-\beta_1-\beta_2-1$.  Let $I_1,I_2$ be generated in degrees $a\le b,c\le d$ respectively.  By Lemma~\ref{lem:initial} and Proposition~\ref{prop:initial2},$(I_1+I_2)_d=S_d$ for $d\ge\max\{a+d-1,b+c-1\}$.  So we need to show that $\max\{a+d,b+c\}\le M(\tau)-\beta_1-\beta_2$.  We consider $4$ cases.
\begin{enumerate}
\item $\beta_i+\alpha_i-\alpha_\tau\ge 2$ for $i=1,2$.
\item $\beta_1+\alpha_1-\alpha_\tau\le 1$ and $\beta_2+\alpha_2-\alpha_\tau\ge 2$.
\item $\beta_2+\alpha_2-\alpha_\tau\ge 2$ and $\beta_2+\alpha_2-\alpha_\tau\le 1$.
\item $\beta_i+\alpha_i-\alpha_\tau\le 1$ for $i=1,2$.
\end{enumerate}

\textbf{Case 1:}   By Proposition~\ref{prop:DescribeI}, $b-a\le 1$ and $d-c\le 1$.  Suppose $b<d$.  Then $a\le c$, so $b+c< d+c$ and $a+d\le c+d$, where $c+d=\alpha_2+\alpha_\tau-\beta_2\le M(\tau)-\beta_1-\beta_2$.  Similarly if $d<b$ then $b+c\le b+a$ and $a+d<a+b$, where $a+b=\alpha_1+\alpha_\tau-\beta_1\le M(\tau)-\beta_1-\beta_2$.  If $b=d$ then $a+d=a+b\le M(\tau)-\beta_1-\beta_2$ and $b+c=d+c\le M(\tau)-\beta_1-\beta_2$.  Hence $\max\{a+d-1,b+c-1\}\le M(\tau)-\beta_1-\beta_2$.

\textbf{Case 2:}  By Proposition~\ref{prop:DescribeI}, $a=\min\{\alpha_1,\alpha_\tau-\beta_1\}$ and $b=\max\{\alpha_1,\alpha_\tau-\beta_1\}$.  Since $\alpha_1\le \alpha_\tau-\beta_1+1$ by assumption, $a\le \alpha_\tau-\beta_1$ and $b\le \alpha_\tau-\beta_1+1$.  By Proposition~\ref{prop:DescribeI},
\[
\begin{array}{rl}
c=&\left\lfloor \dfrac{\alpha_2+\alpha_\tau-\beta_2}{2} \right\rfloor \\[10 pt]
d=&\left\lceil \dfrac{\alpha_2+\alpha_\tau-\beta_2}{2} \right\rceil.
\end{array}
\]
Hence
\[
\max\{a+d,b+c\}\le \alpha_\tau-\beta_1+1+\left\lfloor \dfrac{\alpha_2+\alpha_\tau-\beta_2}{2} \right\rfloor.
\]
$\alpha_2+\alpha_\tau-\beta_2\ge 2$, so
\[
\left\lfloor\dfrac{\alpha_2+\alpha_\tau-\beta_2}{2}\right\rfloor\le \alpha_2+\alpha_\tau-\beta_2-1.
\]
It follows that
\[
\begin{array}{rl}
\max\{a+d,b+c\} &\le \alpha_\tau+\alpha_2+\alpha_\tau-\beta_1-\beta_2\\
&\le M(\tau)-\beta_1-\beta_2.
\end{array}
\]

\textbf{Case 3:} By arguing exactly as in Case $2$ we obtain
\[
\begin{array}{rl}
\max\{a+d,b+c\} & \le \alpha_\tau+\alpha_1+\alpha_\tau-\beta_1-\beta_2\\
& \le M(\tau)-\beta_1-\beta_2.
\end{array}
\]

\textbf{Case 4:} By Proposition~\ref{prop:DescribeI}, $I_1$ is generated in degrees $a=\min\{\alpha_1,\alpha_\tau-\beta_1\},b=\max\{\alpha_1,\alpha_\tau-\beta_1\}$ and $I_2$ is generated in degrees $c=\min\{\alpha_2,\alpha_\tau-\beta_2\},d=\max\{\alpha_2,\alpha_\tau-\beta_2\}$.  We have $\alpha_1\le \alpha_\tau-\beta_1+1$ and $\alpha_2\le \alpha_\tau-\beta_2+1$ by assumption.  It follows that $a\le \alpha_\tau-\beta_1,b\le \alpha_\tau-\beta_1+1$ and $c\le\alpha_\tau-\beta_2,d\le \alpha_\tau-\beta_2+1$.  So 
\[
\begin{array}{rl}
\max\{a+d,b+c\} & \le \alpha_\tau+\alpha_\tau +1-\beta_1-\beta_2\\
& \le M(\tau)-\beta_1-\beta_2.
\end{array}
\]
The final inequality follows since we assumed $\alpha_1,\alpha_2,\beta_1,\beta_2,\alpha_\tau$ are all at least $1$.
\end{proof}

\section{Examples}\label{sec:examples}

We give several examples to illustrate both how the bounds in Theorems~\ref{thm:main2} and~\ref{thm:main3} may be used and how well they approximate the actual regularity of the spline algebra.  These examples also elucidate a difference between \textit{complete} central complexes $\PC$ (in which the intersection of all facets of $\PC$ is an \textit{interior} face of $\PC$) and central complexes which are not complete.  This difference is key to Conjecture~\ref{conj:c1} in the following section.

\begin{exm}\label{ex:SchlegelCube}
In this example we apply Theorem~\ref{thm:main2} to bound the regularity of $C^\alpha(\PC)$ where boundary vanishing is imposed.  Consider the two dimensional polytopal complex $\QC$ in Figure~\ref{fig:SEVL} with five faces, eight interior edges, and four interior vertices.
\begin{figure}[htp]
\includegraphics[scale=.5]{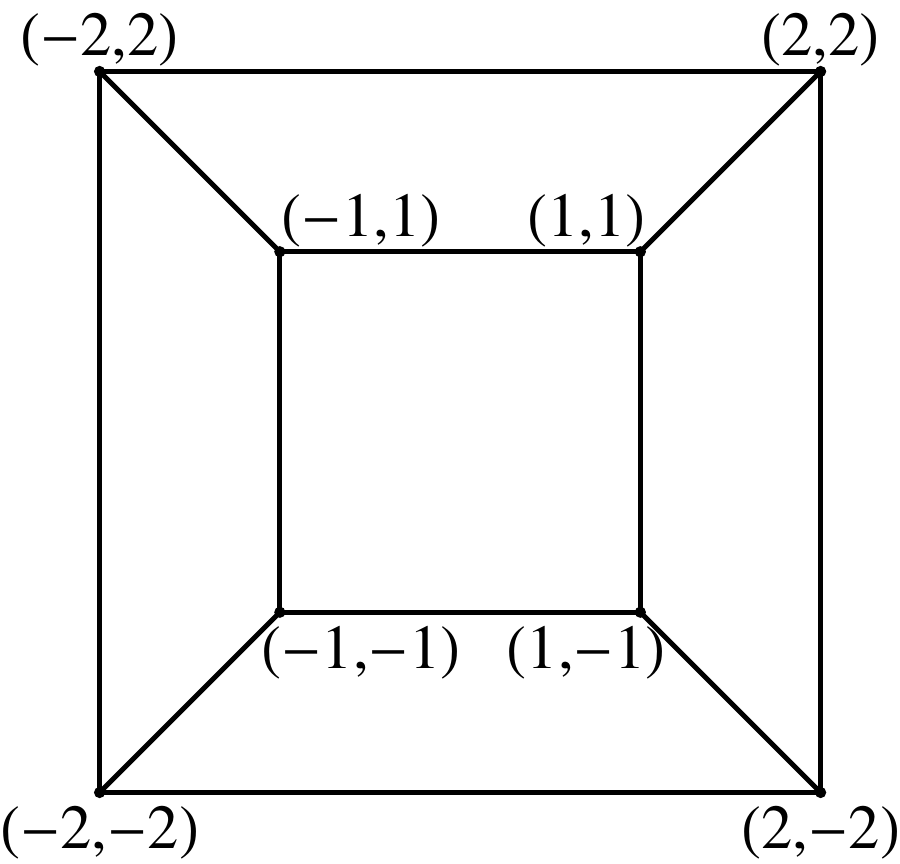}
\caption{$\QC$}\label{fig:SEVL}
\end{figure}
Impose vanishing of order $r$ along interior codimension one edges and vanishing of order $s$ along boundary codimension one faces.  The following Hilbert polynomials are computed in~\cite[Example~8.5]{Assprimes}.  If $s=-1$, then
\[
\begin{array}{rl}
HP(C^\alpha(\wPC),d)=& \frac{5}{2}  d^2+\left(-8 r-\frac{1}{2}\right)d \\
 &-4 \left\lfloor \frac{3
   r}{2}\right\rfloor ^2+12 r \left\lfloor \frac{3 r}{2}\right\rfloor
   -r^2+4 r+2
\end{array}
\]
By Theorem~\ref{thm:main2}, $\reg C^r(\widehat{\QC})\le 6(r+1)-1$ and $HP(C^r(\widehat{\QC}),d)=\dim C^\alpha_d(\QC)$ for $d\ge 6(r+1)-2$.  We compare the regularity bound $6(r+1)-1$ with $\reg C^r(\widehat{\QC})$ as computed in Macaulay2 in Table~\ref{tbl:Reg1}.  $\reg C^r(\widehat{\QC})$ appears to have alternating differences of $1$ and $3$ and grows roughly as $2(r+1)+1$.  In fact $\reg C^r(\widehat{\QC})$ appears to agree with the regularity of $r$-splines on the complex from Example~\ref{ex:HPolytopal}.

\begin{table}[htp]
\begin{center}

\begin{tabular}{c|cccccccccc}
$r$ & 0 & 1 & 2 & 3 & 4 & 5 & 6 & 7 & 8 & 9\\
\hline
$6(r+1)-1$ & 5 & 11 & 17 & 23 & 29 & 35 & 41 & 47 & 53 & 59 \\
$\reg(C^r(\widehat{\QC}))$ & 3 & 4 & 7 & 8 & 11 & 12 & 15 & 16 & 19 & 20
\end{tabular}

\end{center}
\caption{}\label{tbl:Reg1}
\end{table}

Now suppose that vanishing of degree $s\ge 0$ is imposed along $\partial\PC$.  Then
\[
\begin{array}{rl}
HP(C^\alpha(\wPC),d)= & \frac{5}{2} d^2+\left(-8 r-4 s-\frac{9}{2}\right) d \\
&-3 \left\lfloor \frac{2(r+s)}{3}\right\rfloor ^2+4 r \left\lfloor \frac{2
   (r+s)}{3}\right\rfloor +4 s \left\lfloor \frac{2 (r+s)}{3}\right\rfloor
   -\left\lfloor \frac{2 (r+s)}{3}\right\rfloor \\
   & - 4 \left\lfloor \frac{r}{2}\right\rfloor ^2-4 \left\lfloor \frac{3 r}{2}\right\rfloor
   ^2+4 r \left\lfloor \frac{r}{2}\right\rfloor +12 r \left\lfloor \frac{3
   r}{2}\right\rfloor\\
   & -5 r^2+4 r s+8 r+4 s+4.
\end{array}
\]
This formula is correct when $r,s$ are not too small; for instance if $r=3$ and $s=0$, the above formula has constant term $81$ while the actual constant, according to Macaulay2, is $87$.  By Theorem~\ref{thm:main2}, 
\[
\reg (C^\alpha(\widehat{\QC}))\le \max\{6(r+1)+(s+1),5(r+1)+2(s+1)\}-1
\]
and $HP(C^\alpha(\wPC),d)=\dim C^\alpha_d(\PC)$ for 
\[
d\ge \max\{6(r+1)+(s+1),5(r+1)+2(s+1)\}-2.
\]
A comparison of the bound on $\reg (C^\alpha(\widehat{\QC}))$ and its actual value computed in Macau-lay2 appears in Table~\ref{tbl:Reg2} for $r,s\le 5$.

\begin{table}[htp]
\begin{center}
\begin{tabular}{c|ccccc}
\multicolumn{6}{c}{$\max\{6(r+1)+(s+1),5(r+1)+2(s+1)\}-1$}\\
\hline
& $s=0$ & $s=1$ & $s=2$ & $s=3$ & $s=4$\\
\hline
$r=0$ & 7 & 8 & 10 & 12 & 14 \\
$r=1$ & 13 & 14 & 15 & 17 & 19 \\
$r=2$ & 19 & 20 & 21 & 22 & 24 \\
$r=3$ & 25 & 26 & 27 & 28 & 29 \\
$r=4$ & 31 & 32 & 33 & 34 & 35 
\end{tabular}
\medskip
\begin{tabular}{c|ccccc}
\multicolumn{6}{c}{$\reg(C^\alpha(\widehat{\QC}))$}\\
\hline
& $s=0$ & $s=1$ & $s=2$ & $s=3$ & $s=4$\\
\hline
$r=0$ & 4 & 4 & 5 & 6 & 7 \\
$r=1$ & 4 & 5 & 6 & 8 & 9 \\
$r=2$ & 7 & 8 & 8 & 9 & 10 \\
$r=3$ & 8 & 8 & 9 & 10 & 12 \\
$r=4$ & 11 & 11 & 12 & 12 & 13 \\
\end{tabular}
\end{center}
\caption{}\label{tbl:Reg2}
\end{table}
\end{exm}

\begin{exm}\label{ex:LargeFace}
We now give an example which has very different behavior from Example~\ref{ex:HighGen}.  Consider the two-dimensional polytopal complex $\QC$ formed by placing a regular (or almost regular) $n$-gon inside of a scaled copy of itself and connecting corresponding vertices by edges.  $\QC$ has one facet with $n$ edges and $n$ quadrilateral facets.  An example for $n=10$ is shown in Figure~\ref{fig:TenSpoke}.  We may or may not perturb the vertices so that the affine spans of the edges between the inner and outer $n$-gons do not all meet at the origin.  This does not appear to have much effect on regularity, although it does change the constant term of $HP(C^r(\widehat{\QC}),d)$.

\begin{figure}[htp]
\includegraphics[scale=.4]{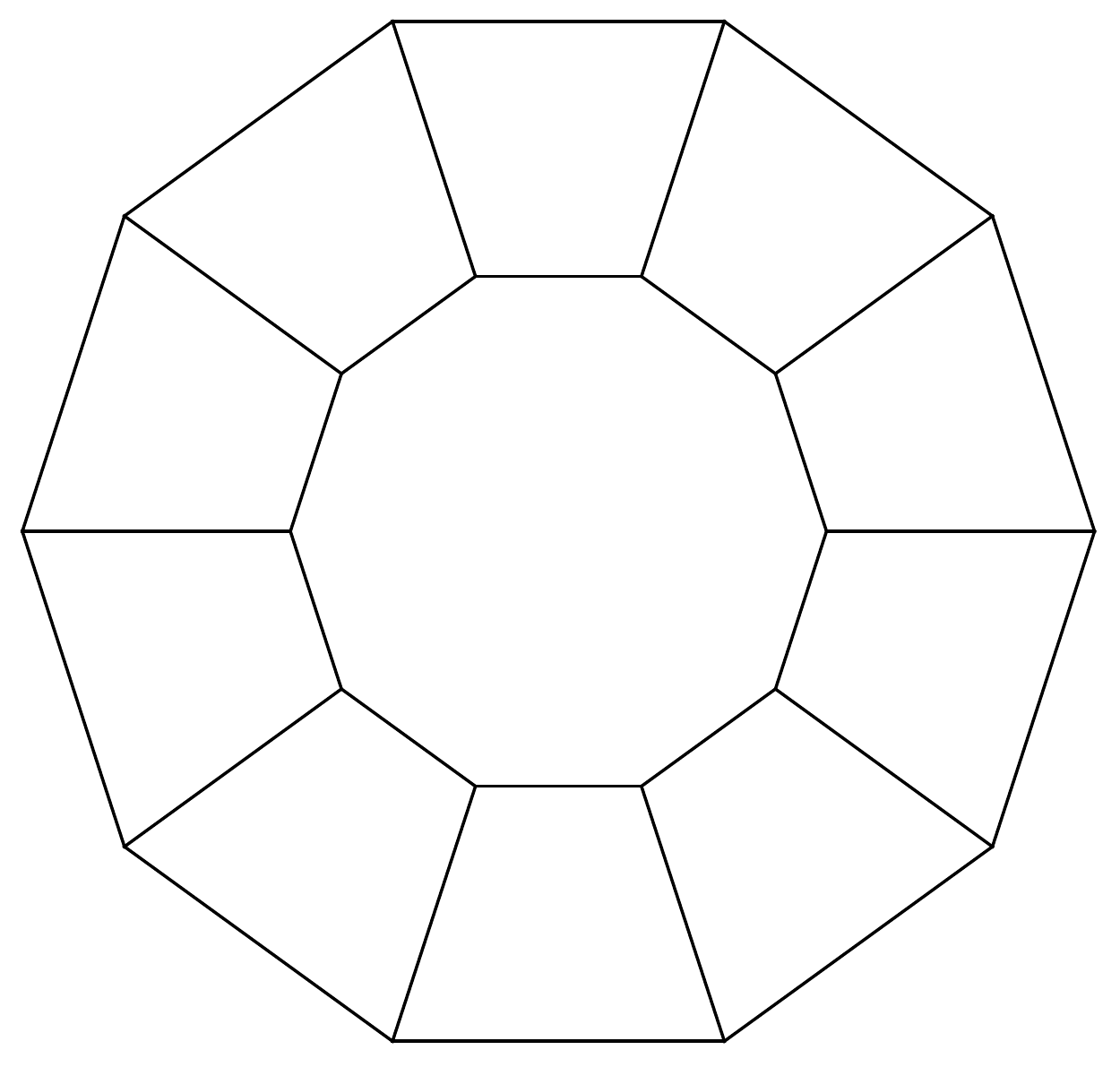}
\caption{}\label{fig:TenSpoke}
\end{figure}

According to Theorem~\ref{thm:main2}, $\reg(C^r(\widehat{\QC}))\le \max\{(r+1)(n+2),5(r+1)\}\le (r+1)(n+2)$ as long as $n\ge 3$.  However, according to computations for $r\le 3$ and $n\le 10$ in Macaulay2, $\reg(\QC)\le 3(r+1)$ regardless of what value $n$ takes.  It appears that having a facet $\sigma$ with many codimesion one facets may only significantly effect the regularity of $C^\alpha(\PC)$ if $\sigma\cap\partial\PC\neq\emptyset$, as in Example~\ref{ex:HighGen}.
\end{exm}

\begin{exm}\label{ex:Oct} Consider a regular octahedron $\Delta\subset\R^3$ triangulated by placing a centrally symmetric vertex, shown in Figure~\ref{fig:OctFig}.  In~\cite[Example~5.2]{Spect}, Schenck shows that $C^r(\Delta)$ is free, generated in degrees $r+1,2(r+1),$ and $3(r+1)$.  Thus the regularity bound for $C^r(\Delta)$ given by Corollary~\ref{cor:freegens} is tight.
\begin{figure}[htp]
\centering
\includegraphics[scale=.5]{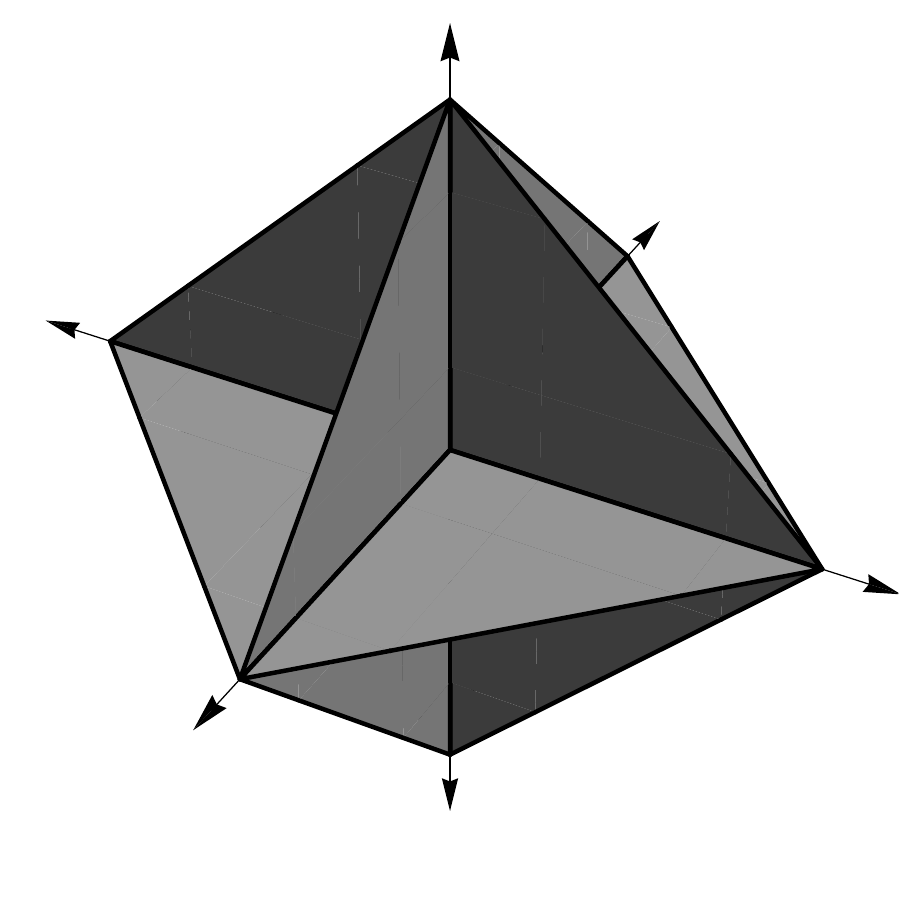}
\caption{Centrally Triangulated Octahedron}\label{fig:OctFig}
\end{figure}
Computations in Macaulay2 suggest that the regularity of $C^r(\Delta)$ stays at $3(r+1)$ for generic perturbations of the noncentral vertices.
\end{exm}

\section{Regularity Conjecture}\label{sec:conjectures}

Let $\PC\subset\R^n$ be a pure hereditary $n$-dimensional polytopal complex.  We end with a conjecture in the case of uniform smoothness, where $\alpha(\tau)=r$ for all $\tau\in\PC^0_{n-1}$ and $\alpha(\tau)=-1$ for $\tau\in(\partial\PC)_{n-1}$.  This conjecture is a slight refinement of ~\cite[Conjecture~5.6]{LS}.  For $\sigma\in\PC_n$, recall $|\partial^0(\sigma)|$ is the number of codimension one faces of $\sigma$ which are interior to the complex.  We will call a central complex $\PC$ \textit{complete} if the intersection of all facets of $\PC$ is an \textit{interior} face of $\PC$.

\begin{conj}\label{conj:c1}
Let $\PC\subset\R^{n+1}$ be a pure, central, hereditary $n$-dimensional polytopal complex.  Call a facet $\sigma\in\PC_{n+1}$ a boundary facet if it has a codimension one face $\tau\in\partial\PC$ so that $\tau$ contains the cone vertex. Set
\[
\begin{array}{ll}
F= & \max\{|\partial^0(\sigma)|:\sigma\in\PC_{n+1}\}\\
F_\partial= & \max\{|\partial^0(\sigma)|:\sigma\in\PC_{n+1}\mbox{ a boundary facet}\}
\end{array}
\]
Then,
\begin{enumerate}
\item If $\PC$ is central and complete, then $\reg (C^r(\PC))\le\reg (LS^{r,n-1}(\PC))\le F(r+1)$ and this bound is tight.
\item If $\PC$ is central but not complete, then $\reg (C^r(\PC))\le F_\partial(r+1)$.
\end{enumerate}
Furthermore, the bound is attained by free modules $C^r(\PC)$ in both cases.
\end{conj}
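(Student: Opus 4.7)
The plan is to follow the approximation framework developed earlier in the paper and to pass from $C^r(\PC)$ through $LS^{r,n-1}(\PC)$. The first inequality of (1), $\reg(C^r(\PC)) \le \reg(LS^{r,n-1}(\PC))$, will follow immediately from Theorem~\ref{thm:main0} once one extends Corollary~\ref{cor:SplinePD} to show $\mathrm{pd}(C^r(\PC)) \le n-1$ for any central, pure, hereditary $(n+1)$-dimensional $\PC$ (this follows from the second-syzygy structure coming from the analogue of Lemma~\ref{lem:seq1}). The content of the conjecture is therefore the second inequality $\reg(LS^{r,n-1}(\PC)) \le F(r+1)$, together with the refinement $\reg(C^r(\PC)) \le F_\partial(r+1)$ in (2) and the construction of free examples attaining the bound.

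To bound $\reg(LS^{r,n-1}(\PC))$ I would extend Proposition~\ref{prop:fakeres} and Proposition~\ref{prop:M1Res} by producing a Cech-type resolution $C_\bullet(\M_{n-1}) \to LS^{r,n-1}(\PC)$ whose $k$-th term is a direct sum of modules $C^r_\QC(\PC)$ with $\QC$ a $k$-fold intersection of maximal subcomplexes in $\Gamma^{n-1,\max}_\PC$. Once such a complex is shown to be exact via the condition $(\star)$ of Section~\ref{sec:Int}, Corollary~\ref{cor:lesreg} reduces the estimate to bounding $\reg(C^r_\QC(\PC))$ for each stratum. When $\QC$ is a single facet $\sigma$, $C^r_\sigma(\PC) \cong S(-\lambda(\sigma))$ with $\lambda(\sigma) = |\partial^0(\sigma)|(r+1) \le F(r+1)$. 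For intermediate strata one would generalize Proposition~\ref{prop:edgereg}: exhibit a submodule $N(\QC) \subset C^r_\QC(\PC)$ generated by facet splines $\Lambda(\psi) e_\psi$ together with connecting splines of the form $\Lambda(\QC)(e_{\psi_1}+\cdots+e_{\psi_m})$, read off $\reg(N(\QC)) \le F(r+1)$ from its explicit minimal resolution, then apply Proposition~\ref{prop:RegDepth} after verifying $\mathrm{codim}(C^r_\QC(\PC)/N(\QC)) > \mathrm{pd}(C^r_\QC(\PC))$.

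The refinement in (2) reflects the fact that when $\PC$ is not complete, splines supported on a purely interior facet $\sigma$ can be written as sums of splines supported on adjacent boundary facets, because every codimension-one face of $\sigma$ is interior to $\PC$ and belongs to another facet whose support form already divides $\Lambda(\sigma)$. I would formalize this by proving $C^r_\sigma(\PC) \subset \sum_\tau C^r_\tau(\PC)$ with $\tau$ ranging over boundary facets incident to $\sigma$, which makes the interior facets redundant in the generating set for $LS^{r,n-1}(\PC)$. For tightness, Example~\ref{ex:HighGen} and Remark~\ref{rm:polygraph} already supply the template: given $\sigma$ with $|\partial^0(\sigma)| = F$ (respectively $F_\partial$), glue reflected copies of $\sigma$ across each interior codimension-one face and verify that $C^r(\PC(\sigma))$ is free with a minimal generator of degree $F(r+1)$ supported on $\sigma$; completeness versus non-completeness is controlled by whether the cone vertex is placed in the interior or on the boundary of $\PC(\sigma)$.

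The main obstacle will be exactness of the higher Cech complexes $C_\bullet(\M_k)$ for $k > 1$, which is precisely the open question flagged in the remark after Proposition~\ref{prop:fakeres}: condition $(\star)$ can fail because intersections of stars of codimension-one faces need not distribute over sums once three or more stars are involved. A fallback would be to replace the direct complex by a spectral sequence filtered by flat rank on $LS^{r,n-1}(\PC)$, accepting weaker control over the differentials. A secondary difficulty is the combinatorial identification in part (2) of which facets force minimal generators, since Example~\ref{ex:LargeFace} shows that interior facets with many codimension-one faces may not influence regularity at all; proving this in general requires precise control of the cokernel of the inclusion $LS^{r,n-2}(\PC) \hookrightarrow LS^{r,n-1}(\PC)$ in the non-complete case.
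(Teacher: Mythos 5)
This statement is Conjecture~\ref{conj:c1}: the paper offers no proof of it, and in fact the remarks following it explain why the very strategy you propose cannot succeed. Your plan for part (1) reduces everything to (i) exactness of the Cech-type complex $C_\bullet(\M_{n-1})$ and (ii) the bound $\reg(C^r_\QC(\PC))\le F(r+1)$ for each stratum $\QC$. Item (i) is exactly the open question flagged in the remark after Proposition~\ref{prop:fakeres}, as you note, but item (ii) is where the real content lies and your proposal does not engage with it: already for the rank-one strata in $\R^3$ the paper needs the full machinery of \S~\ref{sec:SStar} (colon ideals, Schur-function determinants, initial ideals) just to control $\reg(C^\alpha_\tau(\Delta))$, and there is no reason to expect the submodule $N(\QC)$ you sketch to satisfy $\mbox{codim}(C^r_\QC(\PC)/N(\QC))>\mbox{pd}(C^r_\QC(\PC))$ for higher-rank strata, where $\mbox{pd}$ can grow. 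Writing down $N(\QC)$ and ``reading off'' its resolution is not a proof sketch; it is a restatement of the difficulty.

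For part (2) your mechanism is incorrect as stated. The inclusion $C^r_\sigma(\PC)\subset\sum_\tau C^r_\tau(\PC)$, with $\tau$ ranging over boundary facets incident to an interior facet $\sigma$, fails for support reasons: every element of the right-hand side vanishes identically on $\sigma$ (each summand is supported on a facet different from $\sigma$), whereas $C^r_\sigma(\PC)\cong S(-\lambda(\sigma))\neq 0$. So interior facets cannot be made ``redundant'' this way. More fundamentally, the paper points out that part (2) specializes (for cones over planar $\Delta$) to Schenck's conjecture $\reg(C^r(\wDelta))\le 2(r+1)$, which would in particular decide whether $HP(C^1(\wDelta),3)=\dim C^1_3(\wDelta)$ --- an open problem controlled by non-local geometry of the complex. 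Since $LS^{\alpha,k}(\PC)$ is a locally supported approximation, the paper explicitly states this approach ``will not be effective in proving Conjecture~\ref{conj:c1} part (2).'' The only parts of your proposal that are on solid ground are the ones the paper already establishes: the first inequality of (1) via Theorem~\ref{thm:main0} and Corollary~\ref{cor:SplinePD}, the fact that Example~\ref{ex:HighGen} shows the conjectured bounds are the smallest possible, and the free case via Corollary~\ref{cor:freegens} and Example~\ref{ex:Oct}.
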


\begin{remark}
Example~\ref{ex:HighGen} shows that generators can be obtained in degree $F(r+1)$ for the complete central case and degree $F_\partial(r+1)$ in the non-complete central case, so these are the lowest possible regularity bounds that we can conjecture.
\end{remark}

\begin{remark}
If $C^r(\PC)$ is free, then $\reg (C^r(\PC))\le F(r+1)$ by Corollary~\ref{cor:freegens}.  Example~\ref{ex:Oct}, coupled with Theorem~\ref{thm:main3}, shows that Conjecture~\ref{conj:c1} is true in the complete, central, three dimensional, simplicial case.  If $\PC\subset\R^3$ is complete, central, and non-simplicial, then Conjecture~\ref{conj:c1} should be provable using the methods of \S~\ref{sec:SStar}.  The difficulty is in analyzing the ideal $K(\tau)$ from Lemma~\ref{lem:splineideal}.
\end{remark}

\begin{remark}
Conjecture~\ref{conj:c1} part (2) is a natural generalization of a conjecture of Schenck~\cite{CohVan}, that $\reg (C^r(\wDelta))\le 2(r+1)$ for $\Delta\subset\R^2$.  This is a highly nontrivial conjecture in the simplicial case; it implies, for instance, that $\wp(C^1(\wDelta))\le 2$.  To date, it is unknown whether $HP(C^1(\wDelta),3)=\dim C^1_3(\wDelta)$.  The difficulty of this problem is in large part due to the fact that non-local geometry plays an increasingly important role in low degree~\cite{APSr14,NonEx}.  Since our methods hinge on using the algebras $LS^{\alpha,k}(\PC)$, which are locally supported approximations to $C^\alpha(\PC)$, our approach will not be effective in proving Conjecture~\ref{conj:c1} part (2).
\end{remark}

\begin{remark}
In the non-simplicial case, Conjecture~\ref{conj:c1} part (2) appears to run contrary to the spirit of the regularity bounds we have proved in this paper, since no account is taken of interior facets of $\PC$, which may have many codimension one faces.  It is nevertheless consistent with Example~\ref{ex:HighGen}, where the minimal generator of high degree is supported on a boundary facet, and Example~\ref{ex:LargeFace}, where an interior facet with many codimension one faces appears to have no contribution to $\reg(C^r(\wPC))$.  An example of a polytopal complex $\PC$ with a minimal generator of high degree (relative to the number of codimension one faces of boundary facets), supported on interior facets, would be very interesting.
\end{remark}

\section{Acknowledgements}

I thank Hal Schenck for his guidance and patient listening.  I also thank Stefan Tohaneanu and Jimmy Shan for helpful conversations.  Macaulay2~\cite{M2} was indispensable for performing the computations in this paper.  Several of the images were generated using Mathematica.

\end{document}